\definecolor{ceylon}{RGB}{20,190,170}
\definecolor{darkblue}{RGB}{0,50,190}
\definecolor{darkred}{RGB}{190,0,0}
\newcommand{\labitem}[2]{
\def\@itemlabel{\textbf{#1}}
\item
\def\@currentlabel{#1}\label{#2}}
\newtheorem{theorem}{Theorem}[section]
\newtheorem{lemma}[theorem]{Lemma}
\newtheorem{proposition}[theorem]{Proposition}
\theoremstyle{remark}
\numberwithin{equation}{section}
\newcommand{\C}{\mathbf{C}}
\newcommand{\D}{\mathbf{D}}
\newcommand{\E}{\mathbf{E}}
\newcommand{\N}{\mathbf{N}}
\newcommand{\Z}{\mathbf{Z}}
\newcommand{\p}{\mathbf{P}}
\newcommand{\R}{\mathbf{R}}
\newcommand{\s}{\mathbf{S}}
\newcommand{\h}{\mathbf{H}}
\newcommand{\CE}{\mathcal {E}}
\newcommand{\CF}{\mathcal {F}}
\newcommand{\CT}{\mathcal {T}}
\newcommand{\CW}{\mathcal {W}}
\newcommand{\CG}{\mathcal {G}}
\newcommand{\dist}{{\mathrm{dist}}}
\newcommand{\im}{{\mathrm{Im}}}
\newcommand{\SLE}{{\mathrm{SLE}}}
\newcommand{\CLE}{{\mathrm{CLE}}}
\newcommand{\wt}{\widetilde}
\newcommand{\wh}{\widehat}
\newcommand{\eps}{\varepsilon}
\DeclareMathOperator{\diam}{diam}
\DeclareMathOperator{\area}{area}
\DeclareMathOperator{\dis}{dist}
\begin{document}

\title[Uniqueness of the welding problem for $\SLE$ and Liouville quantum gravity]{Uniqueness of the welding problem \\ for $\SLE$ and Liouville quantum gravity}
\author{Oliver McEnteggart,  Jason Miller, Wei Qian}

\date{\today}
\maketitle

\begin{abstract}
We give a simple set of geometric conditions on curves $\eta$, $\wt{\eta}$ in $\h$ from $0$ to $\infty$ so that if $\varphi \colon \h \to \h$ is a homeomorphism which is conformal off $\eta$ with $\varphi(\eta) = \wt{\eta}$ then $\varphi$ is a conformal automorphism of $\h$.  Our motivation comes from the fact that it is possible to apply our result to random conformal welding problems related to the Schramm-Loewner evolution ($\SLE$) and Liouville quantum gravity (LQG).  In particular, we show that if $\eta$ is a non-space-filling $\SLE_\kappa$ curve in $\h$ from $0$ to $\infty$, and $\varphi$ is a homeomorphism which is conformal on $\h \setminus \eta$, and $\varphi(\eta)$, $\eta$ are equal in distribution, then $\varphi$ is a conformal automorphism of $\h$.  Applying this result for $\kappa=4$ establishes that the welding operation for critical ($\gamma=2$) Liouville quantum gravity (LQG) is well-defined.  Applying it for $\kappa \in (4,8)$ gives a new proof that the welding of two independent $\kappa/4$-stable looptrees of quantum disks to produce an $\SLE_\kappa$ on top of an independent $4/\sqrt{\kappa}$-LQG surface is well-defined.
\end{abstract}

\section{Introduction}

\subsection{Overview}

Suppose that $\D_1$, $\D_2$ are copies of the unit disk $\D$ and $\phi$ is a homeomorphism from $\partial \D_1$ to $\partial \D_2$.  A \emph{conformal welding} of $\D_1$, $\D_2$ using the identification $\phi$ is a conformal structure on the sphere $\s^2$ obtained by identifying $\partial \D_1$ with $\partial \D_2$ according to $\phi$.  More precisely, it corresponds to a simple loop $\eta$ on $\s^2$ and two conformal transformations $\psi_1$, $\psi_2$ which take $\D_1$, $\D_2$ to the two components of $\s^2 \setminus \eta$ with $\phi = \psi_1 \circ \psi_2^{-1}$.  Given such a homeomorphism $\phi$, the two basic questions that one is led to ask are: (i) Does a conformal welding exist? (ii) If so, is it unique?  The main focus of the present article is on the latter question.

Recall that a set $K \subseteq \C$ is said to be \emph{conformally removable} if it has the property that whenever $U,V \subseteq \C$ are domains with $K \subseteq U$ and $\varphi \colon U \to V$ is a homeomorphism which is conformal on $U \setminus K$ then~$\varphi$ is conformal on all of $U$.  The uniqueness of a conformal welding is equivalent to the conformal removability of the interface~$\eta$.  There are several geometric conditions associated with a curve~$\eta$ which are known to imply that it is conformally removable.  For example, it was shown by Jones and Smirnov \cite{js2000remove} that boundaries of H\"older domains are conformally removable.  We recall that a simply connected domain $D \subseteq \C$ is a H\"older domain if there exists a conformal transformation $\varphi \colon \D \to D$ which is H\"older continuous up to $\partial \D$.  See also the works \cite{j1995remove,kw1996remove,kn2005remove,n2017remove} for other conditions which imply conformal removability.  In the present work, we will prove uniqueness results for conformal weldings in the setting in which the interface~$\eta$ is not the boundary of a H\"older domain or even a connected domain.  As we will explain in more detail below, our uniqueness results apply for conformal weldings under which the interface satisfies some regularity conditions and are therefore weaker than proving conformal removability.  Conformal removability questions in the setting in which the interface is not the boundary of a connected domain are subtle; it has long been known that the standard Sierpinski carpet is not conformally removable (see the introduction of \cite{n2017remove}) and it has been recently shown that the Sierpinski gasket is not conformally removable \cite{n2018gasketremove}.  The regularity conditions that we impose will allow us to circumvent some of the challenges associated with domains which are the complement of a carpet, but at the same time yield uniqueness results in the setting in which we are interested.

In recent years, there has been considerable interest in \emph{random} conformal weldings.  We will be focused on the case in which the welding interface $\eta$ is an instance of the Schramm-Loewner evolution ($\SLE$).  We recall that $\SLE$ is a random fractal curve defined in a simply-connected planar domain $D$.  It was introduced by Schramm  \cite{s2000scaling} in 1999 as a candidate to describe the scaling limits of lattice models in two-dimensional statistical mechanics.  $\SLE$'s have found many other applications in the intervening years, one of which is in the study of a certain theory of \emph{random} surfaces called Liouville quantum gravity (LQG).   In this context, $\SLE$'s arise as the gluing interface when one conformally welds two such surfaces with boundary \cite{s2016zipper,dms2014mating}.  (Let us also mention the work \cite{ajks2011welding} which considers the conformal welding of an  LQG (random) surface to a Euclidean (deterministic) disk; it turns out in this case that the resulting interface is not an $\SLE$.)  It is explained in \cite{s2016zipper} that one has uniqueness in this context when the gluing interface is an $\SLE_\kappa$ for $\kappa \in (0,4)$ and in \cite{dms2014mating} when $\kappa \in (4,8)$.  (Recall that $\SLE_\kappa$ curves are simple for $\kappa \leq 4$, self-intersecting but not space-filling for $\kappa \in (4,8)$, and space-filling for $\kappa \geq 8$ \cite{rs2005basic}.)  Prior to the present work, uniqueness had not been established for $\kappa=4$.  We will describe this in more detail and provide additional background below.  The purpose of the present work is to give a unified treatment of the uniqueness question for such conformal weldings which will be applicable for all $\kappa \in (0,8)$, and in particular $\kappa=4$.

\subsection{Main results}

The following theorem is one of the main results of this paper, which implies that there is at most one solution to any random conformal welding problem among the set of laws in which the gluing interface is a non-space-filling $\SLE$ curve.

\begin{theorem}
\label{thm:main_result}
Fix $\kappa \in (0,8)$.  Let $\eta$ be an $\SLE_\kappa$ curve in $\h$ from $0$ to $\infty$.  Suppose that $\varphi \colon \h \to \h$ is a homeomorphism which is conformal in $\h \setminus \eta$ and such that $\varphi(\eta) \stackrel{d}{=} \eta$.  Then $\varphi$ is a.s.\ a conformal automorphism of $\h$.
\end{theorem}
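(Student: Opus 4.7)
The plan is to reduce Theorem \ref{thm:main_result} to the deterministic geometric criterion for welding uniqueness that is developed earlier in the paper: any homeomorphism $\varphi \colon \h \to \h$, conformal off a curve $\eta$ in $\h$ from $0$ to $\infty$ and mapping $\eta$ onto another curve $\wt{\eta}$, must be a conformal automorphism of $\h$ provided that both $\eta$ and $\wt{\eta}$ satisfy the relevant geometric regularity hypotheses. Once that deterministic statement is available, Theorem \ref{thm:main_result} reduces to checking almost surely that these hypotheses hold for the $\SLE_\kappa$ curve $\eta$ and for $\wt{\eta} := \varphi(\eta)$.

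The first substantive step is to verify the hypotheses for $\eta$ itself, and I would split into cases by $\kappa$. For $\kappa \in (0,4]$ the curve is simple, so $\h \setminus \eta$ is the union of two Jordan domains, and I would verify the required regularity (a modulus-of-continuity estimate for the conformal maps onto each complementary component, a two-sided accessibility condition at each point of $\eta$, and the vanishing of planar Lebesgue measure of $\eta$) by invoking standard $\SLE$ regularity inputs. For $\kappa \in (4,8)$ the curve $\eta$ is non-simple but non-space-filling; each complementary component is still a Jordan domain (a ``bubble''), and one additionally has to control the set of double points, which has Hausdorff dimension strictly less than one, so the same style of regularity estimates combined with a negligibility statement for the pinch points suffices.

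The second step is distributional transfer. The conditions that appear in the general criterion are measurable properties of the curve alone, depending on neither $\varphi$ nor any coupling, so the hypothesis $\varphi(\eta) \stackrel{d}{=} \eta$ immediately gives that they hold a.s.\ for $\wt{\eta}$ as well. Since $\varphi$ is by assumption a homeomorphism of $\h$ conformal on $\h \setminus \eta$, the deterministic theorem applies on the full-probability event on which both $\eta$ and $\wt{\eta}$ satisfy the geometric hypotheses, and it yields that $\varphi$ is a.s.\ a conformal automorphism of $\h$.

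The main obstacle is the boundary case $\kappa=4$. Here the complementary components of $\eta$ are \emph{not} H\"older domains, so the Jones--Smirnov removability argument and its standard refinements are unavailable; the only modulus-of-continuity bound one has up to the boundary is of logarithmic type. This is exactly why the general criterion proved earlier in the paper has to be weaker than full conformal removability---it is designed so that the degenerate regularity that $\SLE_4$ does possess is still enough to run the uniqueness argument. Consequently the real heart of the proof is the $\kappa=4$ verification, which amounts to matching the weak regularity of the $\SLE_4$ welding to the precise (sub-H\"older) hypotheses imposed by the deterministic criterion; for $\kappa \in (0,4) \cup (4,8)$ the corresponding verification is softer and essentially amounts to quoting standard $\SLE$ estimates.
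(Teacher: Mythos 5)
Your high-level reduction is the same as the paper's: prove a deterministic criterion (Theorem~\ref{thm:general_result}), verify its hypotheses a.s.\ for $\SLE_\kappa$, and use $\varphi(\eta) \stackrel{d}{=} \eta$ to transfer the (curve-intrinsic, measurable) hypotheses to $\wt{\eta}=\varphi(\eta)$. The distributional-transfer step is correct as you state it. However, there is a genuine gap in the verification step, and it begins with a mischaracterization of what must be verified. The hypotheses of the deterministic criterion are not a modulus-of-continuity estimate for the complementary uniformizing maps together with a two-sided accessibility condition; they are \ref{itm:H1} (a uniform bound on the number of excursions of $\eta$ across annuli $B(z,\eps^\beta)\setminus B(z,\eps)$) and \ref{itm:H2} (the non-self-tracing condition: near every excursion $\eta(t;\delta)$ one can find a ball $B(y,\delta^\alpha)$ disjoint from $\eta$ but within distance $2\delta^\alpha$ of it, such that any path from $y$ to the rest of $\partial O\setminus\eta(t;\delta)$ must leave $B(y,\delta^\xi)$), plus zero Lebesgue measure for $\eta$ and upper Minkowski dimension $d<2$ for $\wt{\eta}$. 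A modulus-of-continuity condition in the Jones--Smirnov style is exactly what is \emph{not} available for $\SLE_4$ (by \cite{gms2018multifractal}), which is why the criterion was formulated in these purely geometric terms in the first place; proposing to verify a H\"older/accessibility-type condition would send you back into the removability framework that the paper is deliberately avoiding.

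The second and more serious part of the gap is that the verification of \ref{itm:H2} cannot be dispatched by ``quoting standard $\SLE$ estimates,'' nor is it localized at $\kappa=4$: the paper's argument is uniform over $\kappa\in(0,8)$ and constitutes the bulk of Section~\ref{sec:non_tracing}. It requires (a) a multi-ball Green's function estimate to produce, near each excursion, many candidate empty balls shielded by disjoint arcs; (b) the multi-point boundary hitting estimate of Rezaei--Zhan (Lemma~\ref{lem:RZ}) combined with harmonic-measure/Beurling estimates to show the future of the curve is unlikely to hit many well-separated arcs; (c) a pigeonhole argument on the tree structure of nested image arcs (either a deep branch or a wide vertex); and (d) the reversibility of $\SLE_\kappa$ to handle the past of the curve symmetrically. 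None of these ingredients appears in your proposal, and the case split you propose (simple versus non-simple, with a ``negligibility statement for the pinch points'') does not engage with the actual difficulty. So while the skeleton of your argument is the right one, the proposal as written leaves the main technical content of the theorem unproved.
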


We note that studying the welding problem in the context of $\h$ is equivalent to studying the welding problem in $\s^2$ except one only welds part instead of all of the boundary.  In particular, we described conformal welding as the operation of gluing a pair of copies $\D_1, \D_2$ of the unit disk to produce $\s^2$ decorated by a path according to some homeomorphism $\phi$ from $\partial \D_1$ to $\partial \D_2$.  However, if we only weld a connected segment $I_1 \subseteq \partial \D_1$ (which is not all of $\partial \D_1$) with its image $I_2=\phi(I_1)$, then we can obtain a simply connected domain which we can take to be $\h$ and we can take the gluing interface to be a curve from $0$ to $\infty$.

Theorem~\ref{thm:main_result} also applies to a more general type of welding problem when $\kappa\in(4,8)$. In this range, $\SLE_\kappa$ a.s.\ intersects (without crossing) itself, and arises as the gluing interface of a countable number of disks (or of stable looptrees, see Section~\ref{sec:applications} for more details).

We remark that the part of Theorem~\ref{thm:main_result} for $\kappa \in (0,4)$ is not new.  The reason is that the work \cite{rs2005basic} implies that the complementary components of an $\SLE_\kappa$ curve for $\kappa \in (0,4)$ are a.s.\ H\"older domains and, as mentioned above, \cite{js2000remove} implies that boundaries of H\"older domains are conformally removable.  The range $\kappa \in [4,8)$ in Theorem~\ref{thm:main_result}, however, is new.  Indeed, it is not known whether $\SLE_4$ curves are conformally removable.  In particular, it is shown in \cite{gms2018multifractal} that an $\SLE_4$ curve a.s.\ does not form the boundary of a H\"older domain.  \cite[Corollary~4]{js2000remove} contains a weaker modulus of continuity condition than being the boundary of a H\"older domain which was further improved upon in \cite{kn2005remove} but it is not known whether $\SLE_4$ satisfies the sufficient conditions for conformal removability from \cite{js2000remove,kn2005remove}.  For $\kappa\in(4,8)$, since $\SLE_{\kappa}$ has double points, they have a carpet-like structure and conformal removability in this context is not well-understood (see \cite{n2017remove,n2018gasketremove}).

Theorem~\ref{thm:main_result} in fact follows from a more general result, where the condition of $\eta$ and $\varphi(\eta)$ being $\SLE$ curves can be weakened to a pair of deterministic geometric conditions.  Before describing these conditions, let us mention that the first condition is stable under the application of a locally bi-H\"older continuous homeomorphism $\h \to \h$, the second condition is stable under the application of a diffeomorphism $\h \to \h$, and we require that $\eta$ satisfies one of the conditions and $\varphi(\eta)$ satisfies the other one.  Since both of these conditions are satisfied by $\SLE_\kappa$ curves with $\kappa \in (0,8)$, we can formulate stronger versions of Theorem~\ref{thm:main_result}.  For example, Theorem~\ref{thm:main_result} remains true if we assume that $\varphi(\eta)$ is given by the image of an $\SLE_\kappa$ curve (for any value of $\kappa \in (0,8)$) under a locally bi-H\"older continuous homeomorphism $\h \to \h$.  We also do not have to assume \emph{a priori} that $\eta$, $\varphi(\eta)$ have the same $\kappa$ values.  There are also other versions of Theorem~\ref{thm:main_result} which hold under even weaker hypotheses.  As we will explain in more detail in Section~\ref{sec:applications}, the particular formulation given in Theorem~\ref{thm:main_result} is the one most relevant in the context of LQG.

We will now describe the conditions required for the general theorem statement.  Let $\eta$ be a  curve in~$\h$ from~$0$ to~$\infty$, i.e., $\eta \colon \R_+ \to \h$ is continuous with $\eta(0) = 0$ and $\lim_{t \to \infty} \eta(t) = \infty$. We also assume that $\eta$ is non-self-crossing, but allow it to be self-intersecting.  Let us first fix some notation. 
\begin{enumerate}
\item\label{1} For any $t>0$ and $\delta>0$, let $\tau$ (resp.\ $\sigma$) be the first (resp.\ last) time after (resp.\ before) $t$ that $\eta$ reaches $\partial B(\eta(t),\delta)$ and we denote by $\eta(t;\delta)$ the excursion $\eta[\sigma, \tau]$.
\item For any $z\in\h$ and $\eps \in (0,\delta)$, let us define the excursions of $\eta$ between  $\partial B(z, \eps)$ and $\partial B(z, \delta)$:  if there exists $t$ such that $\eta(t)\in B(z,\eps)$, then we let $\tau$ (resp.\ $\sigma$) be the first (resp.\ last) time after (resp.\ before) $t$ that $\eta$ reaches $\partial B(z,\delta)$ and we say that $\eta[\sigma, \tau]$ is an excursion between  $\partial B(z, \eps)$ and $\partial B(z, \delta)$.  The number of excursions of $\eta$ between $\partial B(z, \eps)$ and $\partial B(z, \delta)$ is always finite, because $\eta$ is a continuous curve with $\eta(t) \to \infty$ as $t \to \infty$.
\end{enumerate}
Let us now describe the following hypotheses on $\eta$:

\begin{enumerate}
\labitem{H1}{itm:H1} \textbf{Bounded number of crossings} (see Figure~\ref{cond1}): For any compact rectangle $K\subseteq \h$ and any $\beta \in (0,1)$, there exist $M>0$ and $\eps_0>0$, such that for all $\eps \in (0,\eps_0)$, and for all $z\in K$, the number of excursions  of $\eta$ between $\partial B(z, \eps^{\beta})$ and $\partial B(z,\eps)$ is at most $M$.

\labitem{H2}{itm:H2} \textbf{Non-self-tracing} (see Figure~\ref{cond2}): For any compact rectangle $K\subseteq \h$ and any $\alpha>\xi>1$, there exists $\delta_0>0$ such that for any $\delta \in (0,\delta_0)$, for any $t>0$ such that $\eta(t) \in K$, one can find  a point $y$ such that

\begin{itemize}
\item[(i)] $B(y, \delta^\alpha) \subseteq B\left(\eta(t),\delta \right)\setminus \eta$ and $ B(y, 2\delta^\alpha)\cap \eta\not=\emptyset.$

\item[(ii)]
Let $O$ be the connected component of $B(\eta(t),\delta)\setminus \eta$ that contains $y$. For any point $a$ in $\partial O \setminus \eta(t;\delta)$, any path contained in $O\cup\{a\}$ which connects $y$ to $a$ must exit the ball $B(y, \delta^\xi)$.

\end{itemize}
\end{enumerate}

\begin{figure}[h!]
\centering
\begin{minipage}[t]{.49\textwidth}
  \centering
  \includegraphics[scale=0.85]{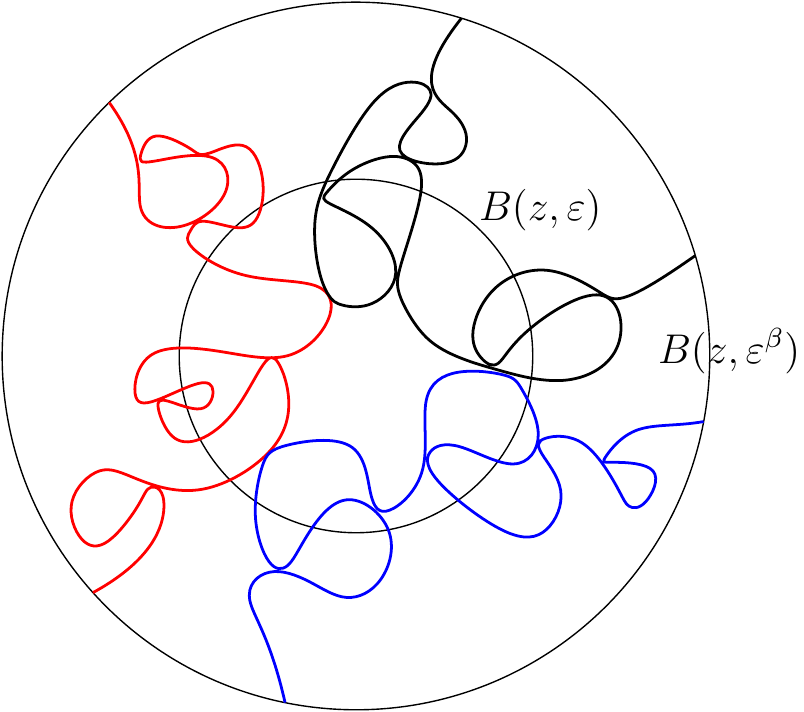}
  \captionof{figure}{\ref{itm:H1} bounded number of crossings across annuli. Here we depict three different crossings in three different colors. The curve can intersect itself, but never crosses itself.}
  \label{cond1}
\end{minipage}\,
\begin{minipage}[t]{.49\textwidth}
  \centering
  \includegraphics[scale=0.85]{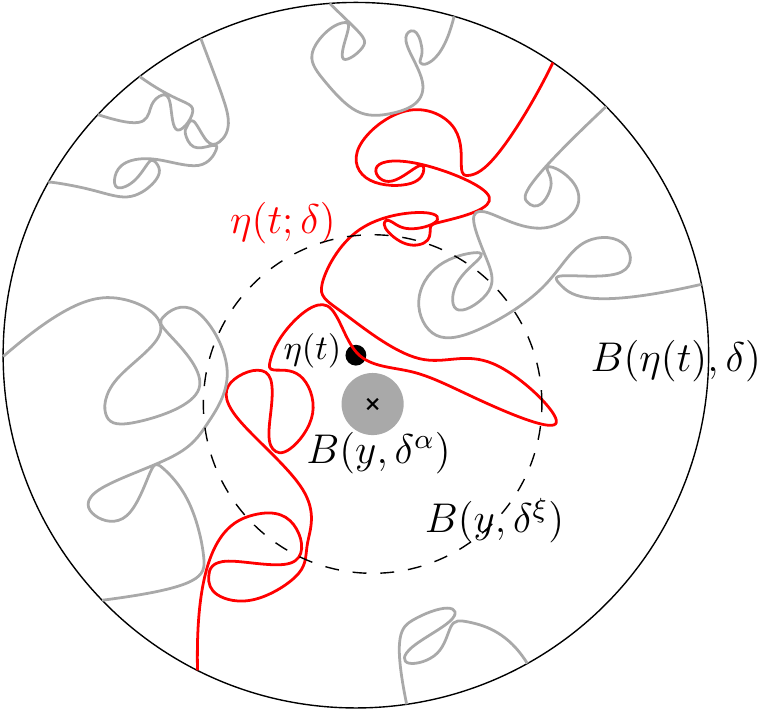}
  \captionof{figure}{\ref{itm:H2} existence of a ball near an excursion (depicted in red) but ``far away" from other parts of the curve (depicted in grey) in the sense that any path connecting $y$ to the grey parts of the curve must either intersect $\eta(t;\delta)$ or exit the ball $B(y, \delta^\xi)$.}
  \label{cond2}
\end{minipage}
\end{figure}

\begin{theorem}
\label{thm:general_result}
Let $\eta$ be a non-self-crossing curve in $\h$ from $0$ to $\infty$.  Suppose that $\varphi \colon \h \to \h$ is a homeomorphism which is conformal in $\h \setminus \eta$.  If $\eta$ satisfies~\ref{itm:H1} and has zero Lebesgue measure, $\varphi(\eta)$ satisfies~\ref{itm:H2} and has upper Minkowski dimension $d<2$, then $\varphi$ is a conformal automorphism of $\h$.
\end{theorem}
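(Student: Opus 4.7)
The plan is to reduce the theorem to a Sobolev regularity claim for $\varphi$ and then invoke Weyl's lemma. Since $\eta$ has zero Lebesgue measure and $\varphi$ is conformal on $\h\setminus\eta$, the Cauchy--Riemann equation $\partial_{\bar z}\varphi = 0$ already holds almost everywhere on $\h$ in the classical sense. If $\varphi$ can be shown to lie in $W^{1,2}_{\mathrm{loc}}(\h)$, then Weyl's lemma implies that $\varphi$ is holomorphic on all of $\h$; being a homeomorphism $\h\to\h$, it must then be a conformal automorphism. The pointwise derivative is automatically square-integrable on any compact rectangle $K\subseteq \h$ because $\int_{K\setminus\eta}|\nabla\varphi|^2\,dA = 2\,\mathrm{Area}(\varphi(K\setminus\eta))<\infty$; the nontrivial issue is to show that this classical derivative agrees with the distributional derivative, i.e., by the ACL characterization, that $\varphi$ is absolutely continuous on $H^1$-almost every horizontal and vertical line in $K$.

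The first step is to use \ref{itm:H1} to obtain a modulus of continuity for $\varphi$ across $\eta$. Fix $z\in K$ and scales $\eps^\beta\ll\eps$. Condition \ref{itm:H1} bounds the number of excursions of $\eta$ between $\partial B(z,\eps^\beta)$ and $\partial B(z,\eps)$ by some $M=M(K,\beta)$, so at most $M+1$ distinct connected components of $\h\setminus\eta$ can separate $B(z,\eps^\beta)$ from $\partial B(z,\eps)$. On each such component $\varphi$ is conformal, and I would apply a length-area (conformal modulus) estimate in the spirit of \cite{js2000remove} on each component and sum over components to control $\mathrm{diam}\,\varphi(B(z,\eps^\beta))$ by a small multiple of $\mathrm{diam}\,\varphi(B(z,\eps))$. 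Iterating across scales produces a power-law modulus of continuity for $\varphi$ on $K$ with some exponent depending on $M$ and $\beta$.

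The second step is to exploit \ref{itm:H2} on $\wt\eta = \varphi(\eta)$ together with the Minkowski dimension bound $d<2$ to bound $\int_K|\nabla\varphi|^2$ with the correct behaviour near $\eta$. For each $t$ with $\wt\eta(t)$ in a compact set and each small $\delta$, \ref{itm:H2} produces a ball $B(y,\delta^\alpha) \subseteq O$, where $O$ is a component of $B(\wt\eta(t),\delta)\setminus\wt\eta$, and $y$ is separated inside $O$ from the rest of $\partial O$ by scale $\delta^\xi$. Since $\varphi^{-1}|_O$ is conformal, Koebe's theorem applied on $B(y,\delta^\xi/2)\cap O$ gives pointwise control of $|(\varphi^{-1})'(y)|$ in terms of $\mathrm{diam}\,\varphi^{-1}(O\cap B(y,\delta^\xi))$. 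Covering $\wt\eta\cap\varphi(K)$ by at most $\delta^{-d-o(1)}$ such balls and summing the area contributions bounds $\int|\nabla\varphi^{-1}|^2$ in an annular neighborhood of $\wt\eta$ (equivalently $\int|\nabla\varphi|^2$ near $\eta$) as $\delta\downarrow 0$, provided the exponents $\alpha>\xi>1$ in \ref{itm:H2} are taken sufficiently large relative to $2-d$.

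The hardest step will be tying the two estimates together via a Fubini/ACL argument. On $H^1$-a.e.\ horizontal line $\ell\subseteq K$, the set $\ell\cap\eta$ can still be fractal of nontrivial dimension, and absolute continuity of $\varphi|_\ell$ requires that the oscillations of $\varphi$ across the components of $\ell\setminus\eta$ sum absolutely; here the polynomial modulus of continuity from Step 1 must dominate the combinatorics of $\ell\cap\eta$, while the matching $L^2$-area bound from Step 2 rules out singular concentration of $|\nabla\varphi|$ along $\eta$. Once ACL on a.e.\ line is established, $\varphi\in W^{1,2}_{\mathrm{loc}}(\h)$ follows by the standard characterization, and Weyl's lemma completes the proof.
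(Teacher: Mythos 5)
Your overall reduction is the same as the paper's: show that $\varphi$ is ACL on a.e.\ coordinate line, note that it is conformal off a Lebesgue-null set, and conclude (the paper invokes the analytic characterization from Ahlfors' book rather than Weyl's lemma, but the skeleton is identical). Your observation that $\int_{K\setminus\eta}|\varphi'|^2$ is finite by the area transformation formula is also how the paper controls the contribution away from $\eta$. The genuine gap is Step~1. Hypothesis~\ref{itm:H1} does not yield a power-law modulus of continuity for $\varphi$, and no such pointwise statement is proved (or needed) in the paper. Bounding the number of excursions of $\eta$ across the annulus $B(z,\eps)\setminus B(z,\eps^\beta)$ bounds the number of complementary components one must track, but it gives no lower bound on the conformal modulus of the \emph{image} of any of them: inside a single component $D$ of $\h\setminus\eta$, the conformal map $\varphi|_D$ can expand a tiny boundary arc near $z$ to a macroscopic set, and nothing in~\ref{itm:H1} forbids this. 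A length--area argument in the spirit of Jones--Smirnov would require a shape condition on the components (H\"older domain, John-type, etc.) that is not assumed; indeed, if~\ref{itm:H1} plus zero Lebesgue measure implied H\"older continuity of every such $\varphi$, one would essentially obtain conformal removability of $\SLE_4$, which the paper explicitly states is open.

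What the paper proves instead is a first-moment bound: for $z$ uniform in a compact rectangle, $\E[\diam(\varphi(B(z,\eps)))\mathbf{1}_{d(z,\eta)<\eps}]=O(\eps^{2/d-\iota})$. The mechanism allocates the hypotheses differently from your proposal. \ref{itm:H1} together with the Beurling estimate shows $\diam(\varphi(B(z,\eps)))\le C\wh\delta$, where $\wh\delta$ is the largest diameter of the images of the at most $M$ excursions across a suitable annulus around $z$; \ref{itm:H2}, applied to $\wt\eta=\varphi(\eta)$ together with Beurling, shows that $\varphi(B(z,\eps^\rho))$ must then contain a ball of radius roughly $\wh\delta^\alpha$ lying within distance comparable to its radius from $\wt\eta$. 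Since $\wt\eta$ has upper Minkowski dimension $d<2$, only about $\wh\delta^{-\alpha d}$ disjoint such balls can exist, which bounds the area of the set of $z$ with $\diam(\varphi(B(z,\eps)))\ge\eps^r$, and summing over dyadic scales of $r$ gives the expectation bound. This average estimate, fed through a Fubini argument over a randomly chosen horizontal line, controls exactly the sum of oscillations of $\varphi$ across $\eta$ that you flag as the ``hardest step'' and leave open; your Step~2, by contrast, re-derives an $L^2$ gradient bound that is already automatic and does not address the possible jump of $\varphi$ across $\eta$. To repair your argument you would need to replace the pointwise modulus of continuity of Step~1 by a measure-theoretic estimate of this kind.
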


Let us emphasize that the conditions~\ref{itm:H1} and~\ref{itm:H2} involve no randomness, hence Theorem~\ref{thm:general_result} is a statement for deterministic curves.
In particular, the proof of Theorem~\ref{thm:general_result} does not involve SLE or LQG. 
We will then prove Theorem~\ref{thm:main_result} by checking that $\SLE_\kappa$ curves with $\kappa\in(0,8)$ a.s.\ satisfy the hypotheses~\ref{itm:H1} and~\ref{itm:H2}.  We note that $\SLE_\kappa$ for $\kappa\in(0,8)$ in fact satisfies much stronger geometric conditions than are assumed in~\ref{itm:H1} and~\ref{itm:H2}.  We believe that it is also possible to check the hypotheses~\ref{itm:H1} and~\ref{itm:H2} for any type of non-space-filling $\SLE$-type process, such as the exotic $\SLE_\kappa^\beta(\rho)$ processes considered in \cite{ms2017cleperc,ms2016gfflightcone,m2016lightconedim} or the conformal loop ensembles for $\kappa \in (8/3,8)$ \cite{she2009cle,sw2012cle}, but we will not carry this out here.

\subsection{Outline}
\label{subsec:outline}

The remainder of this article is structured as follows.  In Section~\ref{sec:applications}, we will describe the main application of Theorem~\ref{thm:main_result}, which is in the context of LQG.  In Section~\ref{sec:proof1.2}, we will prove Theorem~\ref{thm:general_result}. In Section~\ref{sec:non_tracing}, we will show that $\SLE_\kappa$ curves with $\kappa\in(0,8)$ a.s.\ satisfy the hypotheses~\ref{itm:H1} and~\ref{itm:H2}, hence proving Theorem~\ref{thm:main_result}.  We emphasize that the proof of Theorems~\ref{thm:main_result} and~\ref{thm:general_result} will not use LQG.  In particular, it is not necessary to understand Section~\ref{sec:applications} in order to understand the proofs of the main results.

\subsection*{Acknowledgements}

We thank an anonymous referee for many helpful comments which have improved the exposition throughout the article.  JM was supported by ERC Starting Grant 804166 (SPRS). WQ acknowledges the support of an Early Postdoc Mobility grant of the SNF, EPSRC grant EP/L018896/1, and a JRF of Churchill college.

\section{Applications to Liouville quantum gravity}
\label{sec:applications}

We will now provide some additional motivation and consequences of Theorem~\ref{thm:main_result}.  The contents of this section are not needed for the proof of Theorems~\ref{thm:main_result} and~\ref{thm:general_result}.

\subsection{Liouville quantum gravity review}
\label{subsec:lqg_review}

Suppose that $D \subseteq \C$ is a planar domain, $h$ is an instance of (some form of) the Gaussian free field (GFF) on $D$, and $\gamma \in (0,2]$ is a fixed parameter.  The Liouville quantum gravity (LQG) surface parameterized by $D$ and described by $h$ formally corresponds to the metric tensor
\begin{equation}
\label{eqn:lqg_metric}
e^{\gamma h(z)} (dx^2 + dy^2)
\end{equation}
where $dx^2 + dy^2$ represents the Euclidean metric on $D$.  The expression~\eqref{eqn:lqg_metric} does not make literal sense since $h$ is a distribution and does not take values at points.

In the case that $\gamma \in (0,2)$, the volume form associated with~\eqref{eqn:lqg_metric} was constructed in \cite{ds2011kpz}.  The approach taken in \cite{ds2011kpz} involves a certain regularization procedure.  Namely, for each $\eps > 0$ and $z \in D$ such that $B(z,\eps) \subseteq D$ we let $h_\eps(z)$ denote the average of $h$ on $\partial B(z,\eps)$.  One then takes
\begin{equation}
\label{eqn:mu_h_def}
 \mu_h^\gamma = \lim_{\eps \to 0} \eps^{\gamma^2/2} e^{\gamma h_\eps(z)} dz
\end{equation}
where $dz$ denotes Lebesgue measure on $D$.  The normalization factor $\eps^{\gamma^2/2}$ is necessary to obtain a non-trivial limit.  It is also possible to construct a measure in the critical case $\gamma=2$ \cite{drsv2014critical_deriv,drsv2014critical_kpz}.  In order to get a non-trivial limit, one has to introduce an extra correction in the normalization.  Following \cite{js2017uniqueness,hrv2018disk,p2018uniqueness}, one takes
\begin{equation}
\label{eqn:critical_mu_h_def}
\mu_h^{\gamma=2} = \lim_{\eps \to 0} \eps^2 \sqrt{\log \eps^{-1}} e^{2 h_\eps(z)} dz,
\end{equation}
where $dz$ again denotes Lebesgue measure on $D$.  (The works \cite{drsv2014critical_deriv,drsv2014critical_kpz} construct the critical LQG measure using a different approximation scheme.)

The regularization procedures in~\eqref{eqn:mu_h_def}, \eqref{eqn:critical_mu_h_def} lead to a certain change of coordinates formula for the measure $\mu_h^\gamma$.  Namely, suppose that $\varphi \colon \wt{D} \to D$ is a conformal transformation and
\begin{equation}
\label{eqn:map_equiv}
 \wt{h} = h \circ \varphi + Q \log| \varphi'| \quad\text{where}\quad Q = \frac{2}{\gamma} + \frac{\gamma}{2},
 \end{equation}
then it is a.s.\ the case that for all Borel sets $A$ one has that $\mu_h^\gamma (\varphi(A)) = \mu_{\wt{h}}^\gamma(A)$.

We say that two domain/field pairs $(D,h)$, $(\wt{D},\wt{h})$ are equivalent as quantum surfaces if $h$, $\wt{h}$ are related as in~\eqref{eqn:map_equiv}.  A \emph{quantum surface} is an equivalence class under this equivalence relation.  A choice of representative of a quantum surface is referred to as an \emph{embedding} of the quantum surface.  One can similarly extend these definitions to the setting of surfaces with extra marked points or a distinguished path.

We remark that whether two embeddings describe an equivalent quantum surface can in some cases be a subtle question.  For example, two definitions of LQG on the sphere are respectively given in \cite{dms2014mating} and \cite{dkrv2016sphere} which on the surface appear to be very different.  It was later proved in \cite{ahs2017equiv} that the constructions of \cite{dms2014mating,dkrv2016sphere} give rise to equivalent quantum surfaces.

In the case that $h$ has free boundary conditions on a linear boundary segment $L \subseteq \partial D$, one can similarly define a boundary length measure $\nu_h^\gamma$ by setting
\begin{align}
\nu_h^\gamma &= \lim_{\eps \to 0} \eps^{\gamma^2/4} e^{\gamma h_\eps(z)/2} dz \quad\text{for}\quad \gamma \in (0,2)\\
\nu_h^{\gamma=2} &= \lim_{\eps \to 0} \eps \sqrt{\log \eps^{-1}} e^{h_\eps(z)} dz \quad\text{for}\quad \gamma=2,
\end{align}
where in each case $dz$ denotes Lebesgue measure on $L$.  In the case that $h$ has free boundary conditions on part of $\partial D$ which is not a linear segment, one can conformally map $D$ to a domain which has piecewise linear boundary, define the boundary measure as above, and then map back using~\eqref{eqn:map_equiv}.

We remark that a general theory of random measures which have the same law as $\mu_h^\gamma$ and $\nu_h^\gamma$ was developed earlier by Kahane and is referred to as \emph{Gaussian multiplicative chaos} \cite{k1985gmc}.  See also \cite{rv2014gmc} for a more recent review.  See also \cite{rv2010revisited,b2017gmc}.  Similar measures also appeared earlier in \cite{hk1971quantum}.

We also remark that the metric (i.e., distance function) for LQG was first constructed in the case that $\gamma = \sqrt{8/3}$ in \cite{ms2015lqg_tbm1,ms2016lqg_tbm2,ms2016lqg_tbm3} and recently for all $\gamma \in (0,2)$ in \cite{dddf2019tightness,gm2019local,dfgps2019weak,gm2019conf,gm2019metric,gm2019coord}.

The study of LQG surfaces is motivated in part because they have been conjectured to describe the scaling limits of random planar maps decorated by an instance of a statistical physics model.  There are a number of different ways of formulating such a conjecture depending on the topology that one chooses.  Scaling limit results of this type have now been proved in a number of cases using the so-called peanosphere topology \cite{lsw2017schnyder,kmsw2015bpmap,she2016inventory,gkmw2018bending} and in the Gromov-Hausdorff topology \cite{gm2016saw,gm2017perc}.  For example, the cases $\gamma=1,\sqrt{4/3}, \sqrt{2},\sqrt{8/3},\sqrt{3}$ respectively correspond to random planar maps decorated by a Schnyder woods, bipolar orientation, uniform spanning tree, percolation configuration, Ising (or FK-Ising) model.  The case $\gamma=2$, which is one of the main motivations for the present article, conjecturally corresponds to a random planar map decorated by an instance of the $4$-state Potts model.

\subsection{Welding quantum surfaces}
\label{subsec:welding}

\begin{figure}[ht!]
\begin{center}
\includegraphics[scale=0.85]{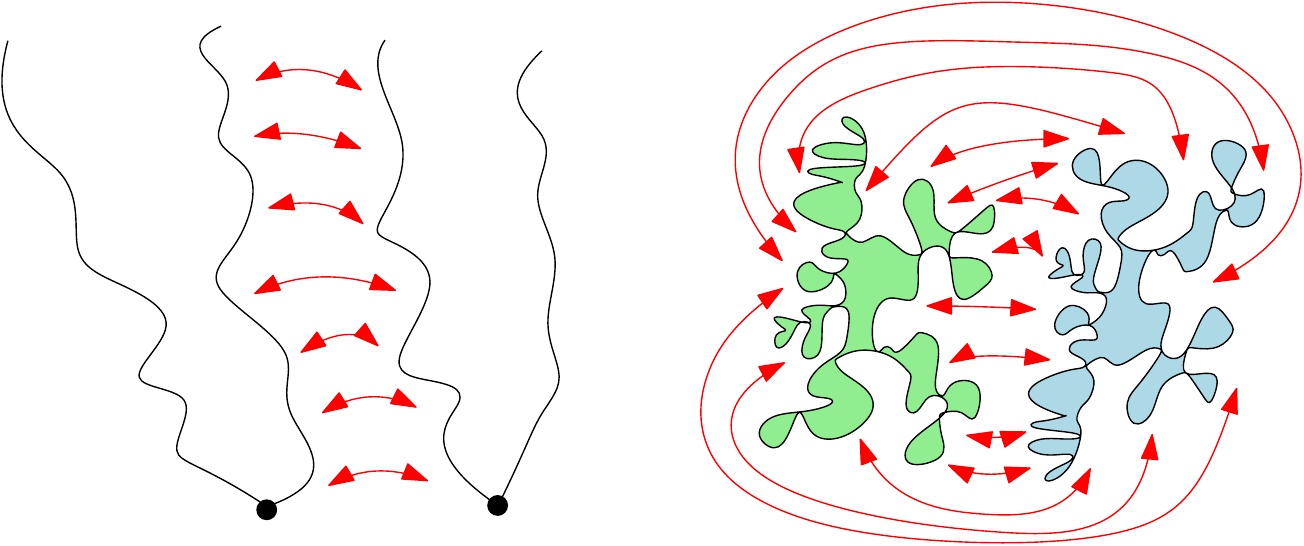}
\end{center}
\caption{\label{fig:welding} {\bf Left:} It is shown in \cite{s2016zipper} that it is possible to conformally weld two independent LQG surfaces called quantum wedges along their boundary rays to produce an LQG surface decorated by a simple $\SLE_\kappa$ curve.  {\bf Right:} It is shown in \cite{dms2014mating} that if one glues together two independent $\kappa'/4$-stable looptrees of quantum disks, $\kappa'=4/\gamma^2 \in (4,8)$, then one obtains an LQG surface decorated by a self-interesting but non-space-filling $\SLE_{\kappa'}$ curve.}
\end{figure}

A number of different welding operations for quantum surfaces are considered in \cite{s2016zipper,dms2014mating}.  Fix $\gamma \in (0,2)$ and let $\kappa = \gamma^2$. In this case, $\SLE_\kappa$ is a.s.\ a simple curve. The basic idea is that if one takes an appropriate type of $\gamma$-LQG surface $\CW = (\h,h,0,\infty)$ parameterized by $\h$ and with marked points at $0$ and $\infty$ and then draws an independent $\SLE_\kappa$ process $\eta$ in $\h$ from $0$ to $\infty$ on top of it, then the quantum surfaces $\CW_1$, $\CW_2$ parameterized by the components of $\h \setminus \eta$ which are to the left and right of $\eta$ and marked by $0$ and $\infty$ are independent.  Moreover, the path-decorated quantum surface $(\CW,\eta)$ can be recovered as a conformal welding of $\CW_1$, $\CW_2$, where the boundary welding homeomorphism is provided by the $\gamma$-LQG boundary measure.  The uniqueness of the welding follows from the conformal removability of $\SLE_\kappa$ for $\kappa \in (0,4)$.  Indeed, suppose that $(\wt{\CW},\wt{\eta})$ is another path-decorated quantum surface such that the quantum surfaces $\wt{\CW}_1$, $\wt{\CW}_2$ parameterized by the components of $\h \setminus \wt{\eta}$ which are to the left and right of $\wt{\eta}$ are equivalent as quantum surfaces to $\CW_1$, $\CW_2$, respectively.  This means that there exist conformal maps $\varphi_j$, $j=1,2$, from $\CW_j$ to $\wt{\CW}_j$ so that if $h_j$ (resp.\ $\wt{h}_j$) is the field which describes $\CW_j$ (resp.\ $\wt{\CW}_j$) then we have that $h_j = \wt{h}_j \circ \varphi_j + Q\log|\varphi_j'|$.  If $\wt{\CW}_1$, $\wt{\CW}_2$ are identified according to $\gamma$-LQG boundary length, then $\varphi_1,\varphi_2$ extend to a homeomorphism $\varphi \colon \CW \to \wt{\CW}$ which is conformal off $\eta$.  The conformal removability of $\SLE_\kappa$ for $\kappa \in (0,4)$ implies that $\varphi$ is conformal everywhere, hence $(\CW,\eta)$, $(\wt{\CW},\wt{\eta})$ are equivalent as path-decorated quantum surfaces.  Extensions of the same idea also apply when one considers quantum surfaces with other topologies (e.g., $\C$ and decorated by an independent whole-plane $\SLE_\kappa$).

The existence of the welding in the critical case $\gamma=2$ was recently proved in \cite{hp2018welding}.  The uniqueness of the welding in the case $\gamma=2$ follows from Theorem~\ref{thm:main_result}.  Combined, this implies that the welding operation for critical $(\gamma=2)$ LQG is well-defined.  To explain this in more detail, suppose that $\CW = (\h,h,0,\infty)$ is a quantum surface parameterized by $\h$ with marked points at $0$ and $\infty$ and that $\eta$ is an independent $\SLE_4$ on $\h$ from $0$ to $\infty$.  Let $\CW_1$, $\CW_2$ be the quantum surfaces parameterized by the components of $\h \setminus \eta$ which are to the left and right of $\eta$.  Suppose that $(\wt{\CW},\wt{\eta})$ is another path-decorated quantum surface which has the same law as $(\CW,\eta)$ so that the quantum surfaces $\wt{\CW}_1,\wt{\CW}_2$ parameterized by the components of $\h \setminus \wt{\eta}$ which are to the left and right of $\wt{\eta}$ are equivalent to $\CW_1,\CW_2$ and that $\wt{\CW}_1,\wt{\CW}_2$ are identified according to LQG boundary length.  Then there exists a homeomorphism $\varphi \colon \h \to \h$ which is conformal on $\h \setminus \eta$ which takes $\CW_j$ to $\wt{\CW}_j$ for $j=1,2$.  In particular, $\varphi(\eta) = \wt{\eta}$ has the same law as $\eta$.  Theorem~\ref{thm:main_result} implies that $\varphi$ is conformal everywhere so that $(\CW,\eta)$ and $(\wt{\CW},\wt{\eta})$ are equivalent as path-decorated quantum surfaces.  Consequently, $(\CW,\eta)$ is a.s.\ determined by $\CW_1,\CW_2$ (as the argument we have just described above implies that conditionally independent samples from the law of $(\CW,\eta)$ given $\CW_1,\CW_2$ must be a.s.\ the same).  This argument shows that there can be a.s.\ at most one conformal welding of $\CW_1$, $\CW_2$ in which the welding interface is an $\SLE_4$ type curve (or more generally any curve which satisfies the hypotheses of Theorem~\ref{thm:general_result}).  However, it does not rule out the existence of conformal weldings in which the welding interface exhibits much wilder behavior (i.e., the possibility that $\wt{\eta}$ does not satisfy the hypotheses of Theorem~\ref{thm:general_result}).  Indeed, this would require us to establish the removability of $\SLE_4$.

In \cite{s2016zipper,dms2014mating} it is shown that it is also possible to consider $\kappa' = 16/\gamma^2 > 4$ processes on top of LQG surfaces.  Suppose that we are in the setting that $\gamma \in (\sqrt{2},2)$ so that $\kappa' \in (4,8)$.  As such an $\SLE_{\kappa'}$ process has double points and separates non-trivial regions from its target point, the quantum surfaces which are cut out on the left and right sides of the path are not simply connected but rather have a tree-like structure.  The reader familiar with the results of \cite{dms2014mating} will recall that the entire path-decorated quantum surface can be mathematically described as a welding of independent $\kappa'/4$-stable looptrees of quantum disks.  Since it is not known if such $\SLE_{\kappa'}$ processes are conformally removable, a different type of argument for showing the conformal welding is well-defined is given in \cite{dms2014mating}.  The statement given in \cite{dms2014mating} is an abstract measurability result which says that the overall path-decorated quantum surface $(\CW,\eta')$, where $\eta'$ is an independent $\SLE_{\kappa'}$ process and $\CW$ is an appropriate type of quantum surface, is a.s.\ determined by the $\kappa'/4$-stable looptrees of quantum disks $\CT_1,\CT_2$ which are parameterized by the components cut off by $\eta'$ on its left and right sides.  Arguing as in the previous two paragraphs, Theorem~\ref{thm:main_result} gives another proof of this fact.  Moreover, it implies that this measurable function satisfies some properties which are not obvious from the proof given in \cite{dms2014mating}.  For example, it is not obvious that the abstract measurable function constructed in \cite{dms2014mating} behaves well under the operation of time-reversal.  More precisely, suppose that one has two independent $\kappa'/4$-stable loop trees $\CT_1$, $\CT_2$ of quantum disks,  then the welding of $\CT_1$, $\CT_2$ together determine a path decorated surface $(\CW,\eta')$.  One can also reverse the orientations of $\CT_1,\CT_2$ to obtain another pair $\wh{\CT}_1$, $\wh{\CT}_2$ of $\kappa'/4$-stable looptrees (since the law of a stable looptree is preserved when switching the orientation).  Then the welding of $\wh{\CT}_1$, $\wh{\CT}_2$ also determines a path decorated surface $(\wh{\CW},\wh{\eta}')$.  It follows from \cite{dms2014mating} that $(\wh{\CW},\wh{\eta}')$ has the same law as $(\CW,\eta')$ but it does not follow directly from \cite{dms2014mating} that $(\wh{\CW},\wh{\eta}_R')$, where $\wh{\eta}_R'$ is the time-reversal of $\wh{\eta}'$, is equal to $(\CW,\eta')$ as a path-decorated quantum surface  (one would need to use the reversibility of $\SLE_{\kappa'}$ for $\kappa' \in (4,8)$ proved in \cite{ms2016imag3} to obtain these statements).  However, since the geometric hypotheses of Theorem~\ref{thm:general_result} are satisfied by the time-reversal of any curve that satisfies them in the forward direction, the uniqueness statement obtained from the present article also behaves well with respect to time-reversal.  For example, Theorem~\ref{thm:main_result} holds if we assume that $\eta$ is an $\SLE_\kappa$ curve and $\varphi(\eta)$ has the law of the time-reversal of an $\SLE_\kappa$ curve.  These are now known to be the same, but we expect that one could use this argument to give a new proof of the reversibility of $\SLE_{\kappa'}$ for $\kappa' \in (4,8)$ (although we do not carry this out here since our proof that $\SLE$ satisfies the hypotheses of Theorem~\ref{thm:general_result} uses the reversibility of $\SLE$ for simplicity).

There is forthcoming work of the second author together with Sheffield and Werner \cite{msw2018cleonlqg} which will study conformal loop ensembles ($\CLE$) on Liouville quantum gravity.  We expect that the results established here will also lead to uniqueness results for weldings considered in that context.

We remark that the uniqueness results for the welding fall into the wider class of results which are concerned with showing that a certain object coupled with the GFF is in fact a.s.\ determined by the GFF.  Other important examples include:
\begin{itemize}
\item The level lines \cite{ss2013continuum_contour} and flow lines of the GFF \cite{dub2009partition,ms2016imag1,ms2016imag4}.
\item The matings of correlated continuum random trees to produce space-filling $\SLE_\kappa$ for $\kappa > 4$ on an LQG surface from \cite{dms2014mating}.
\item The metric measure space structure of the Brownian map determines its embedding as the $\sqrt{8/3}$-LQG sphere \cite{ms2015lqg_tbm1,ms2016lqg_tbm2,ms2016lqg_tbm3}.
\end{itemize}

\section{Proof of Theorem~\ref{thm:general_result}} \label{sec:proof1.2}

In this section, we assume that $\eta$ and $\wt\eta$ are non-self-crossing curves in $\h$ from $0$ to $\infty$. We assume that $\eta$ satisfies~\ref{itm:H1} and has zero Lebesgue measure and that $\wt\eta$ satisfies~\ref{itm:H2} and has upper Minkowski dimension $d<2$. Let $\varphi$ be a homeomorphism from $\h$ onto itself that is conformal on $\h\setminus \eta$ and such that $\wt\eta=\varphi(\eta)$. We want to show that $\varphi$ is conformal everywhere.

\subsection{Outline of the proof}

We know by the hypotheses that $\varphi$ is a homeomorphism which is a.e.\ conformal.  
In order to show that $\varphi$ is conformal everywhere, it suffices to show that it in addition has the ACL (absolutely continuous on lines) property (see \cite[Chapter II]{ahflors2006qc}), namely $\varphi$ is absolutely continuous on a.e.\ line which is parallel to one of the coordinate axes (i.e., the $x$-axis or the $y$-axis). 

To show that $\varphi$ is absolutely continuous on a given line $L$, we need to show that for each compact interval $I$ of $L$ and every $\eps > 0$ there exists $\delta > 0$ so that if $x_1,y_1,\ldots,x_k,y_k$ are points in $I$ with $\sum_{j=1}^k |y_j-x_j| < \delta$ then $\sum_{j=1}^k |\varphi(x_j) - \varphi(y_j)| < \eps$.  To prove that this is the case, we will rely on Lemma~\ref{lem:jensen} and Proposition~\ref{prop:distortion}.

\begin{lemma}\label{lem:jensen}
For any compact set $K\subseteq\h$, the function $\varphi'$ is $L^1$ on $K$.
\end{lemma}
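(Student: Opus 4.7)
The plan is to show that $\varphi'$ is square-integrable on $K$ by a standard area argument, and then to deduce $L^1$-integrability by Jensen's (i.e.\ Cauchy--Schwarz) inequality.

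First I would observe that since $\eta$ is the image of a continuous curve going to $\infty$, it is a closed subset of $\h$, so $\h \setminus \eta$ is open. On this open set $\varphi$ is by assumption conformal, hence $\varphi'$ is defined in the classical sense. Since by hypothesis $\eta$ has zero Lebesgue measure, $\varphi'$ is in particular defined Lebesgue-a.e.\ on any compact set $K \subseteq \h$, so the integrals below make sense.

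Next I would choose a bounded open neighbourhood $V$ of $K$ whose closure $\ol{V}$ is still a compact subset of $\h$, and set $U = V \setminus \eta$, which is open. On $U$, $\varphi$ is holomorphic and injective (it is the restriction of a homeomorphism), so the standard area formula for conformal maps applies and yields
\[
\int_U |\varphi'(z)|^2 \, dA(z) \;=\; \area(\varphi(U)) \;\leq\; \area(\varphi(\ol{V})) \;<\; \infty,
\]
the last inequality because $\varphi(\ol{V})$ is a compact subset of $\h$ and hence has finite Lebesgue measure. Since $\eta$ has zero Lebesgue measure and $K\setminus \eta\subseteq U$, this gives
\[
\int_K |\varphi'(z)|^2 \, dA(z) \;=\; \int_{K\setminus \eta} |\varphi'(z)|^2 \, dA(z) \;\leq\; \area(\varphi(\ol V)) \;<\; \infty.
\]

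Finally, applying Jensen's inequality (equivalently Cauchy--Schwarz) to the convex function $x\mapsto x^2$ on the finite-measure space $(K, dA)$ gives
\[
\int_K |\varphi'(z)| \, dA(z) \;\leq\; |K|^{1/2}\left(\int_K |\varphi'(z)|^2\, dA(z)\right)^{1/2} \;<\; \infty,
\]
which is the desired conclusion. No step is a real obstacle here: the only point that merits care is verifying that the area formula applies to $\varphi|_U$, but this is immediate from the fact that $\varphi$ is a holomorphic injection on the open set $U$, so that $|\varphi'|^2$ is its Jacobian and $\varphi(U)$ is an open subset of $\varphi(\ol V)$.
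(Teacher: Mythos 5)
Your proof is correct and follows essentially the same route as the paper: the area transformation formula for the injective conformal map combined with Cauchy--Schwarz. The only (harmless) difference is that you bound $\int_{K\setminus\eta}|\varphi'|^2$ by $\area(\varphi(\ol V))$ for a compact neighbourhood $\ol V$, whereas the paper identifies it with $\area(\varphi(K))$ directly, using that $\wt\eta$ has zero area since its upper Minkowski dimension is less than $2$; your version sidesteps that last observation.
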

\begin{proof}
Note that $\varphi'$ is only well-defined away from $\eta$, but since $\eta$ has zero Lebesgue measure, the integral of $|\varphi'|$ on $K$ is well-defined.  By the Cauchy-Schwarz inequality, we have
\begin{align*}
\int_K|\varphi'(w)| dw \le \left(\int_K |\varphi'(w)|^2 dw\right)^{1/2} \area(K)^{1/2} =\area (\varphi(K))^{1/2} \area(K)^{1/2} <\infty,
\end{align*}
where the equality is due to the area transformation formula for the conformal map $\varphi$ and we have used that $\area(\varphi(K)) = \area(\varphi(K \setminus \eta))$ as $\wt{\eta}$ has upper Minkowski dimension $d < 2$.
\end{proof}

Note that Lemma~\ref{lem:jensen} allows us to control the variation of $\varphi$ away from $\eta$. We will need the following proposition to control the variation of $\varphi$ across the curve $\eta$.

\begin{proposition}
\label{prop:distortion}
Suppose that $K\subseteq\h$ is a compact rectangle and let $z \in K$ be chosen uniformly at random.  Then for any $\iota>0$, we have
\begin{align}\label{eq:expected_distortion}
\E\!\left[\diam\!\left(\varphi(B(z,\eps))\right) \mathbf{1}_{d(z,\eta)<\eps}  \right] =O( \eps^{2/d-\iota}).
\end{align}
\end{proposition}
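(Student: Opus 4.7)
The strategy is a multi-scale covering argument: cover $\wt\eta$ at a scale $\delta$ to be optimized in terms of $\eps$, bound the area of the $\psi$-pre-image of each $\delta$-ball centered on $\wt\eta$ using Koebe distortion enabled by \ref{itm:H2}, and assemble using the Minkowski dimension bound. Write $\psi := \varphi^{-1}$ and $D(z) := \diam \varphi(B(z,\eps))$.

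\emph{Reformulation.} If $z$ satisfies $d(z,\eta)<\eps$ and $\eta(t)\in B(z,\eps)$, then $\wt\eta(t)=\varphi(\eta(t))\in\varphi(B(z,\eps))$, and since the image has diameter $D(z)$, we have $\varphi(B(z,\eps))\subseteq B(\wt\eta(t),D(z))$, hence $B(z,\eps)\subseteq \psi(B(\wt\eta(t),D(z)))$. In particular $\area\psi(B(\wt\eta(t),D(z)))\geq \pi\eps^2$.

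\emph{Key lemma (area bound).} I would first prove: for any compact rectangle $K' \subseteq \h$ containing $\varphi(K)$ in its interior and any $\iota>0$, there exist $C,\delta_0>0$ such that
\[\area \psi(B(\wt\eta(t),\delta)) \leq C\delta^{2-\iota}\qquad \text{for all } \wt\eta(t)\in K',\ \delta\in(0,\delta_0).\]
To prove this, fix $\alpha>\xi>1$ close to $1$ (depending on $\iota$) and apply \ref{itm:H2} to $\wt\eta$ at scale $\delta$. This yields a point $y$ with $B(y,\delta^\alpha)\subseteq B(\wt\eta(t),\delta)\setminus\wt\eta$ and a connected component $O\ni y$ of $B(\wt\eta(t),\delta)\setminus\wt\eta$ with the property that $y$ is separated from $\partial O\setminus \wt\eta(t;\delta)$ at Euclidean scale $\delta^\xi$. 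The map $\psi$ is univalent and conformal on $B(y,\delta^\alpha)$, so Koebe's distortion theorem gives
\[\area\psi(B(y,\delta^\alpha/2))\asymp \delta^{2\alpha}|\psi'(y)|^2,\qquad \diam\psi(B(y,\delta^\alpha/2))\asymp \delta^\alpha|\psi'(y)|.\]
One then extends this derivative estimate to a substantial portion of $O$ by chaining Koebe distortion along a sequence of overlapping balls whose total number and hyperbolic length is controlled using property (ii), and handles the remaining connected components of $B(\wt\eta(t),\delta)\setminus\wt\eta$ by an analogous argument applied to each. Summing the component areas yields the stated bound, with $\iota$ as small as desired by taking $\alpha,\xi\to 1$.

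\emph{Assembly.} Cover $\wt\eta\cap K'$ at scale $\delta$ by balls $B(\wt\eta(t_i),\delta)$, $i=1,\ldots,N_\delta$, with $N_\delta\leq C\delta^{-d-\iota'}$ by the Minkowski hypothesis. For each $z\in A:=\{d(z,\eta)<\eps\}\cap K$, choose the nearest $t_i$ to $t(z)$ so that $\varphi(z)\in B(\wt\eta(t_i),3\delta)$ (assuming $D(z)\leq \delta$, else $z$ contributes only with $D(z)>\delta$, treated in a higher dyadic range). Split $A$ into good $z$ (for which $B(z,\eps)\subseteq\psi(B(\wt\eta(t_i),3\delta))$, whence $D(z)\leq 6\delta$) and bad $z$ (for which $\dist(z,\partial\psi(B(\wt\eta(t_i),3\delta)))<\eps$). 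The good contribution to $\E[D\mathbf 1_A]$ is $\lesssim \delta$; the bad contribution is estimated using the area bound from the key lemma together with the fact that each contributing ball must have $\psi$-area $\geq\pi\eps^2$, so the number of contributing $i$ is at most $\min(N_\delta,C/\eps^2)$ (the bound $C/\eps^2$ coming from the bounded overlap of the cover and the finiteness of $\area\psi(K')$). Optimizing $\delta$ as a small power of $\eps$ (the natural scale being $\delta\sim \eps^{2/d}$, which matches the typical inradius of $\psi$-preimages at that scale against $\eps$) yields $\E[D\mathbf 1_A]=O(\eps^{2/d-\iota})$.

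\emph{Main obstacle.} The subtle step is the key lemma, specifically extending the Koebe estimate on the clean ball $B(y,\delta^\alpha)$ across the rest of $B(\wt\eta(t),\delta)\setminus\wt\eta$, which may split into many components separated by distinct excursions of $\wt\eta$ through the ball. The separation property \ref{itm:H2}(ii) is essential here: it ensures that each component is ``anchored'' to a single excursion at Euclidean scale $\delta^\xi$, which in turn keeps the Koebe chains spanning the component of bounded hyperbolic length, so that $|\psi'|$ remains comparable to $|\psi'(y)|$ up to a factor $\delta^{o(1)}$ throughout the relevant regions.
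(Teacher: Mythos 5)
There is a genuine gap, and it sits exactly where you flag the ``main obstacle'': the key lemma $\area\,\psi(B(\wt\eta(t),\delta))\le C\delta^{2-\iota}$ is not provable by the route you sketch, and even if it were granted, your assembly would not yield the stated bound. On the first point: \ref{itm:H2} supplies \emph{one} shielded ball $B(y,\delta^\alpha)$ in \emph{one} component $O$ of $B(\wt\eta(t),\delta)\setminus\wt\eta$; it says nothing about the other components, of which there may be infinitely many when the curve is self-touching ($\kappa\in(4,8)$ gives a carpet-like structure), so ``handle the remaining components by an analogous argument'' has no basis. Within $O$ itself, propagating $|\psi'|\asymp|\psi'(y)|$ by Koebe chains requires hyperbolic diameter $o(\log\delta^{-1})$, which \ref{itm:H2}(ii) does not provide (points of $O$ come arbitrarily close to $\wt\eta$). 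And even with perfect propagation you would get $\area\,\psi(O)\lesssim |\psi'(y)|^2\delta^2$, with no upper bound on $|\psi'(y)|$ available from the hypotheses. On the second point: your own reformulation gives $B(z,\eps)\subseteq\psi(B(\wt\eta(t),D(z)))$, so combining it with the key lemma yields $\pi\eps^2\le C\,D(z)^{2-\iota}$, i.e.\ a \emph{lower} bound $D(z)\gtrsim\eps^{2/(2-\iota)}$ for every $z$ near $\eta$ --- the wrong direction for bounding $\E[D\mathbf 1_A]$. Carrying out the dyadic decomposition over the ranges $D(z)\in(2^k\delta,2^{k+1}\delta]$ with the per-ball area bound $\min(N_s,C/\eps^2)\cdot Cs^{2-\iota}\cdot s$ at scale $s=2^k\delta$, the sum is dominated by the top scale $s\asymp1$ and gives $O(1)$, not $O(\eps^{2/d-\iota})$. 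It is also telling that \ref{itm:H1} is never used in your argument, whereas it is essential in the theorem.

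The missing mechanism is a statement about the \emph{image} of $B(z,\eps)$ containing a round ball, not about the area of preimages of balls on $\wt\eta$. The paper first shows (using \ref{itm:H1} and the Beurling estimate) that $\diam\varphi(B(z,\eps))\le C\max_i\diam\varphi(e_i)=:C\wh\delta$, where $e_1,\dots,e_N$, $N\le M$, are the excursions of $\eta$ across the annulus between $\partial B(z,\eps^{\beta'})$ and $\partial B(z,\eps^{\beta})$; this is where the bounded-crossing hypothesis enters irreplaceably. It then applies \ref{itm:H2} to $\wt\eta$ near $\varphi(e_i)$ and a second Beurling/harmonic-measure comparison to show that $\varphi(B(z,\eps^{\rho}))$ \emph{contains} a ball $B(y,\wh\delta^{\alpha})$ with $y$ in a lattice of mesh $\wh\delta^{\alpha}$ and $B(y,2\wh\delta^{\alpha})\cap\wt\eta\neq\emptyset$. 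Consequently every $z$ with $D(z)\in(\eps^r,2\eps^r]$ lies in $B(\varphi^{-1}(y),\eps^{\rho})$ for one of at most $O(\eps^{-r\alpha(d+\iota')})$ lattice points $y$ near $\wt\eta$ (Minkowski dimension), giving $\area\{D(z)\approx\eps^r\}=O(\eps^{2\rho-r\alpha(d+\iota')})$; summing over dyadic $r$ with $r_0=2/d$ produces the exponent $2/d$. Your proposal contains neither the reduction of $D(z)$ to the excursion images nor the contained-round-ball (``roundness'') estimate, and these are the substance of the proof.
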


We emphasize that the expectation in Proposition~\ref{prop:distortion} is over the randomness in $z$.  We will prove Proposition~\ref{prop:distortion} in the later subsections. Let us first prove Theorem ~\ref{thm:general_result} assuming Proposition~\ref{prop:distortion}.

\begin{proof}[Proof of Theorem ~\ref{thm:general_result}]
As we have explained earlier, it is enough to prove that  $\varphi$ is absolutely continuous on a.e.\ line which is parallel to one of the coordinate axes. We will show this for horizontal lines, since it works the same way for vertical lines.

Fix $a_2>a_1$, $b_2>b_1>0$, and let $K$ be the compact rectangle $[a_1, a_2]\times [b_1, b_2]$.  We randomly choose $b\in [b_1, b_2]$ according to the uniform measure on $[b_1, b_2]$. Let $L$ be the random horizontal line at height~$b$. It suffices to prove that $\varphi$ is a.s.\ absolutely continuous on $L$ and to this end it is enough to control the behavior of $\varphi$ on the compact interval $I:=L\cap K$, since we can take any $a_2>a_1$.

Fix $\delta > 0$ and let $x_1, y_1, \ldots, x_k, y_k$ be points in $I$ such that $\sum_{j=1}^k |y_j-x_j| < \delta$.
We aim to bound the quantity $\Delta:= \sum_{j=1}^k |\varphi(x_j) - \varphi(y_j)| $.
For any $n \in \N$, we divide $I$ into $n$ intervals $I_1, \ldots, I_n$ of length $\alpha_n:=(a_2-a_1)/n$.
Let 
\[ S_n:=\sum_{j=1}^n \diam(\varphi(I_j))  \mathbf{1}_{\eta \cap I_j\not=\emptyset}.\]
Then
\begin{align}\label{eqq1}
\Delta\le S_n+   \sum_{j=1}^k \int_{ [x_j, y_j]} |\varphi'(w)| dw \quad\text{for all}\quad n \in \N.
\end{align}

If we choose $a$ uniformly in $[a_1, a_2]$, then the point $z=(a,b)$ is a uniformly random point in $K$.
For any $n \in \N$, we divide $K$ into $n\times n$ rectangles of size $\alpha_n \times \beta_n$ where $\alpha_n=(a_2-a_1)/n$ and $\beta_n= (b_2-b_1)/n$.
For $1 \le u,v \le n$, we denote by $R_{u,v}$ the rectangle with corners $(a_1+(u-1) \alpha_n, b_1+(v-1) \beta_n)$ and $(a_1+ u \alpha_n, b_1+ v \beta_n)$.

Letting $\E$ denote the expectation w.r.t.\ the random point $z = (a,b)$, we have that
\begin{align}
\E\!\left[\diam\!\left(\varphi(B(z,\alpha_n+\beta_n))\right) \mathbf{1}_{d(z,\eta)<\alpha_n+\beta_n}  \right] 
\ge& \sum_{u=1}^n  \sum_{v=1}^n  \E\!\left[ \diam(\varphi(R_{u,v}))  \mathbf{1}_{\eta\cap R_{u,v} \not=\emptyset} \mathbf{1}_{z\in R_{u,v}}  \right] \notag\\
=& \frac{1}{n^2} \sum_{u=1}^n  \sum_{v=1}^n   \diam(\varphi(R_{u,v}))  \mathbf{1}_{\eta\cap R_{u,v} \not=\emptyset}. \label{eqn:diam_lbd}
\end{align}
On the other hand, we know that
\begin{align}
\E[S_n] \le& \sum_{u=1}^n  \sum_{v=1}^n  \E\!\left[ \diam(\varphi(R_{u,v}))  \mathbf{1}_{\eta \cap R_{u,v} \not=\emptyset} \mathbf{1}_{(v-1)\beta_n\le b-b_1< v \beta_n} \right] \notag\\
=&\frac{1}{n} \sum_{u=1}^n\sum_{v=1}^n   \diam(\varphi(R_{u,v}))  \mathbf{1}_{\eta \cap R_{u,v} \not=\emptyset}. \label{eqn:sn_ubd}
\end{align}
Combining~\eqref{eqn:diam_lbd} and~\eqref{eqn:sn_ubd} and applying Proposition~\ref{prop:distortion} in the second to last equality, we see that
\begin{align*}
\E[S_n] \le n \E\!\left[\diam\!\left(\varphi(B(z,\alpha_n+\beta_n))\right) \mathbf{1}_{d(z,\eta)<\alpha_n+\beta_n}  \right]= n \times O(n^{\iota-2/d}) = o(1) \quad\text{as}\quad n \to \infty.
\end{align*}
This implies that $S_n$ converges to $0$ in probability, hence we can find a subsequence $n(r)$ along which $S_{n(r)}$ converges to $0$ a.s.

Putting the sequence $S_{n(r)}$ into~\eqref{eqq1} and letting $r$ go to $\infty$, we get that a.s.
\begin{align}\label{eqq2}
\Delta \le \sum_{j=1}^k \int_{ [x_j, y_j]} |\varphi'(w)| dw.  
\end{align}
We know by Lemma~\ref{lem:jensen} that $\varphi'$ is $L^1$ on $K$, hence $\varphi'$ is a.s.\ $L^1$ on $I$ (as the height of $I$ is uniformly random).  This implies that for any $\eps>0$, we can find $\delta_0>0$, such that for all $\delta \in(0,\delta_0)$ and all points $x_1,y_1,\ldots, x_k, y_k$ in $I$ such that $\sum_{j=1}^k |y_j-x_j| < \delta$, the right hand-side of~\eqref{eqq2} is smaller than $\eps$. This proves that it is a.s.\ the case that for such a randomly chosen line $L$, the function $\varphi$ is absolutely continuous on $L$.
\end{proof}

Our main goal in the rest of the section will be to prove Proposition~\ref{prop:distortion}.
This will be accomplished in two steps in Section~\ref{sec:roundish} and Section~\ref{sec:proof_of_proposition}. 
We will first estimate in Section~\ref{sec:roundish} the distortion under $\varphi$ of a small ball $B(z,\eps)$ which intersects $\eta$. Then in Section~\ref{sec:proof_of_proposition}, we will finally prove Proposition~\ref{prop:distortion} using the results in Section~\ref{sec:roundish} and the fact that the upper Minkowski dimension of $\wt\eta$ is strictly less than  $2$.

\subsection{Distortion along the curve}
\label{sec:roundish}

Throughout, we fix $\alpha>1> \beta' > \beta>\rho >0$ and a compact rectangle $K\subseteq \h$.  The goal of this section is to prove that provided $\eps > 0$ is sufficiently small, if a ball $B(z,\eps)$ with $z \in K$ intersects~$\eta$, then $\varphi(B(z,\eps^\rho))$ (as long as the image  is also small) contains a Euclidean ball with diameter at least $\diam(\varphi(B(z,\eps)))^\alpha$.  

Let us first recall the Beurling estimate (see, e.g., \cite[Theorem V.4.1]{MR1329542}), which is a basic tool that we will use multiple times in the sequel.
\begin{lemma}[Beurling estimate]
There exists a constant $c>0$ such that for any curve $\gamma$ from $\partial B(0,\eps)$ to the unit circle, the probability that a Brownian motion starting at $-\eps$ reaches the unit circle without hitting $\gamma$ is bounded above by $c \eps^{1/2}$.
By inversion symmetry, the probability that a Brownian motion starting at $1$ reaches $\partial B(0, \eps)$ without hitting $\gamma$ is also bounded above by $c \eps^{1/2}$.
\end{lemma}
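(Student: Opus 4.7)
The plan is to prove the bound in two stages: first reduce to an explicit extremal curve by a symmetrization argument, and then compute harmonic measure explicitly in that extremal configuration. Throughout, let $p(\gamma)$ denote the probability that a Brownian motion started at $-\eps$ reaches $\partial B(0,1)$ without hitting $\gamma$; equivalently, $p(\gamma)$ is the harmonic measure of $\partial B(0,1)$ from $-\eps$ in $\D\setminus\gamma$.

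First I would invoke the \textbf{Beurling projection theorem}: among all connected sets $\gamma \subseteq \overline\D\setminus B(0,\eps)$ joining $\partial B(0,\eps)$ to $\partial \D$, the quantity $p(\gamma)$ is maximized when $\gamma$ is the radial segment $[\eps,1]$ located diametrically opposite to the starting point $-\eps$. The cleanest version of projection is stated for harmonic measure from the origin, so to accommodate the shifted start $-\eps$ I would precompose with a M\"obius automorphism of $\D$ that sends $-\eps$ to $0$; this deforms the inner obstacle by a bi-Lipschitz map of constants $1+O(\eps)$ and converts the problem into one where the classical projection theorem applies directly. The upshot is that it suffices to prove $p([\eps,1]) \le c\eps^{1/2}$.

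Second, for the extremal case $\gamma=[\eps,1]$, I would use two explicit conformal maps to transfer to a domain where the answer can be read off. The M\"obius map $\psi(z)=(z-\eps)/(1-\eps z)$ takes $\D\setminus[\eps,1]$ onto $\D\setminus[0,1]$ and sends $-\eps$ to $w_0=-2\eps/(1+\eps^2)$. The principal branch of $z\mapsto z^{1/2}$ then maps $\D\setminus[0,1]$ conformally onto the upper half-disk $H=\{|w|<1,\ \im w>0\}$, sending the slit $[0,1]$ to the diameter $[-1,1]$ and $\partial\D$ to the upper semicircle. The point $w_0$ is sent to $iy$ with $y=\sqrt{-w_0} \le \sqrt{2\eps}$. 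The remaining task is to bound the harmonic measure of the upper semicircle from $iy$ in $H$: reflecting across the real axis shows this is half the harmonic measure of $\partial\D$ from $iy$ in $\D$, which by the explicit Poisson kernel is at most $C y$. Combining the inequalities yields $p([\eps,1]) \le C\sqrt{2\eps}$, as claimed. The inversion symmetry statement follows by applying $z\mapsto 1/\bar z$, which swaps $\partial B(0,\eps)$ with $\partial B(0,1/\eps)$ and rescales; after rescaling by $\eps$ one recovers the same problem with the roles of $1$ and $\eps$ exchanged.

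The main obstacle I anticipate is the first step: carefully handling the asymmetric starting point $-\eps$ when invoking projection, since a naive application places the start at the origin. A clean alternative route, which avoids projection entirely, is an extremal length argument: the presence of $\gamma$ forces the extremal length of the curve family from $\{-\eps\}$ to $\partial\D$ in $\D\setminus\gamma$ to be large (by a length-area comparison against the annulus $\{\eps<|z|<1\}$, whose modulus is $\tfrac{1}{2\pi}\log(1/\eps)$), and Beurling's projection lemma converts this modulus into the desired harmonic measure bound via the inequality $p\le 4e^{-\pi\cdot\text{mod}}$. Either route rests on radial-slit extremality, and once that is in hand the explicit half-disk computation is routine.
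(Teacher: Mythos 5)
The paper itself gives no proof of this lemma: it is quoted as a classical fact with a pointer to the literature (Theorem~V.4.1 of the cited reference), so the only meaningful comparison is between your argument and the standard textbook one --- and they essentially coincide. Your outline is correct: after the M\"obius normalization $z\mapsto(z+\eps)/(1+\eps z)$ moving the pole $-\eps$ to $0$ (under which the image of $\partial B(0,\eps)$ lies in $\overline{B(0,3\eps)}$, so the circular projection of the image curve still contains a segment of the form $[-1,-3\eps]$), the Beurling projection theorem reduces everything to a radial slit, and the slit case is handled exactly by your composition of a M\"obius map with $z\mapsto z^{1/2}$, which places the image of the pole at height $O(\eps^{1/2})$ above the diameter of the half-disk. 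Two small corrections. First, the quantity produced by reflection is not ``half the harmonic measure of $\partial\D$ from $iy$'' (that would be $1/2$): odd reflection across the diameter shows that the probability of exiting the half-disk through the semicircle equals the \emph{difference} of the harmonic measures of the upper and lower semicircles from $iy$ in $\D$, which the Poisson kernel bounds by $O(y)$ --- the estimate you actually need. Second, in your alternative extremal-length route, the extremal distance from the singleton $\{-\eps\}$ to $\partial\D$ is infinite; the argument must be run from a small disk about $-\eps$ (or from the corresponding prime end), and it is in comparing that configuration to the round annulus that the exponent $1/2$ genuinely enters. Neither point affects the validity of your main line of argument, and your treatment of the second assertion via $z\mapsto \eps/\bar z$ is exactly the ``inversion symmetry'' the statement alludes to.
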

Let us now come back to the estimates of the distortion along the curve.
Let $\eps > 0$.  For any $z\in K$ such that $\eta$ intersects $B(z,\eps)$, let $N$ be the number of excursions of $\eta$ between $\partial B(z, \eps^{\beta'})$ and $\partial B(z, \eps^\beta)$. By~\ref{itm:H1}, we know that there exist $M>0$ and $\eps_0>0$ such that for all $\eps\in(0,\eps_0)$ we have $N\le M$.  Denote the excursions by $e_1,\ldots, e_N$.  
See Figure~\ref{distortion} for an illustration of the definitions. 
Let $\delta_i$ be the diameter of $\varphi(e_i)$. Let $\wh\delta:=\max(\delta_1,\ldots, \delta_N)$.

\begin{lemma}\label{lem:roundish}
There exist $\eps_0, \delta_0\in(0,1)$ such that for any $z\in K$ and $\eps \in (0,\eps_0)$ with $\eta \cap B(z,\eps) \neq \emptyset$ there exists $y$ such that
\begin{align}\label{byd}
 B(y, 2\delta^\alpha)\cap \wt{\eta} \neq \emptyset \quad \text{and}\quad B(y, \delta^\alpha) \subseteq \varphi(B(z, \eps^\rho)) \quad\text{where}\quad \delta:=\min( \wh\delta, \delta_0).
\end{align}
\end{lemma}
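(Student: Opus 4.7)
The plan is to produce the point $y$ by applying~\ref{itm:H2} to $\wt\eta$ at a point on $\varphi(e_{i^*})$ (the image of the excursion of maximal image diameter), and then to establish the inclusion $B(y,\delta^\alpha)\subseteq\varphi(B(z,\eps^\rho))$ by transferring Brownian-motion estimates across $\varphi$ via conformal invariance, the Beurling estimate, and the separation property in~\ref{itm:H2}.

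For the setup, I would pick some $\xi\in(1,\alpha)$ and a compact rectangle $K'\subseteq\h$ containing the compact set $\varphi(K)$, and apply~\ref{itm:H2} to $\wt\eta$ on $K'$ with parameters $\alpha$ and $\xi$ to obtain a scale $\delta_0^{\mathrm{H2}}$; set $\delta_0:=\delta_0^{\mathrm{H2}}$. Given $z\in K$ and $\eps\in(0,\eps_0)$ with $\eta\cap B(z,\eps)\neq\emptyset$, I apply~\ref{itm:H1} at scale $\eps^{\beta'}$ with exponent $\beta/\beta'\in(0,1)$ to bound the number $N\leq M$ of excursions $e_1,\ldots,e_N$ of $\eta$ between $\partial B(z,\eps^{\beta'})$ and $\partial B(z,\eps^{\beta})$. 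Let $e_{i^*}$ maximize $\diam(\varphi(e_i))=:\wh\delta$, set $\delta=\min(\wh\delta,\delta_0)$, and pick a time $t$ with $\wt\eta(t)\in\varphi(e_{i^*})\subseteq\varphi(K)\subseteq K'$. Applying~\ref{itm:H2} at $\wt\eta(t)$ with scale $\delta$ supplies $y$ satisfying (i) and (ii); in particular $B(y,2\delta^\alpha)\cap\wt\eta\neq\emptyset$, which gives half of~\eqref{byd}.

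For the inclusion, it suffices to show $w':=\varphi^{-1}(w)\in B(z,\eps^\rho)$ for every $w\in B(y,\delta^\alpha)$. Let $W$ be a planar Brownian motion from $w$. Since $\wt\eta$ is a curve from $0$ to $\infty$ meeting $B(y,2\delta^\alpha)$, some sub-arc of $\wt\eta$ joins $B(y,2\delta^\alpha)$ to the complement of $B(y,\delta^\xi)$, so the Beurling estimate gives $\p[W\text{ exits }B(y,\delta^\xi)\text{ before hitting }\wt\eta]=O(\delta^{(\alpha-\xi)/2})$. On the complementary event, let $a$ be the first hitting point: if $a\in\partial O\setminus\wt\eta(t;\delta)$, concatenating the BM trajectory with a segment from $y$ to $w$ inside $B(y,\delta^\alpha)\subseteq O$ would produce a path in $O\cup\{a\}$ from $y$ to $a$ contained in $B(y,\delta^\xi)$, contradicting~\ref{itm:H2}(ii); hence $a\in\wt\eta(t;\delta)$. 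Conformal invariance of Brownian motion (using that $\varphi$ is conformal on $\h\setminus\eta$) then implies that the $\varphi^{-1}$-image of the trajectory, up to the hitting time, is a time-changed Brownian motion $\wt W$ in $\h\setminus\eta$ from $w'$, first hitting $\eta$ at $a':=\varphi^{-1}(a)\in\varphi^{-1}(\wt\eta(t;\delta))$, with the entire preimage trajectory contained in $\varphi^{-1}(B(y,\delta^\xi))$.

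The remaining and, I expect, hardest step is to show that $\varphi^{-1}(B(y,\delta^\xi))\subseteq B(z,\eps^\rho)$. The preimage is a simply-connected open set meeting $e_{i^*}\subseteq B(z,\eps^\beta)\subseteq B(z,\eps^\rho)$ through the endpoints of the BM trajectory, so by connectedness, if it exited $B(z,\eps^\rho)$ it would force a sub-arc of $\eta$ crossing the annulus $B(z,\eps^\rho)\setminus B(z,\eps^\beta)$ whose image remains inside $B(\wt\eta(t),\delta)$. I would rule this out by combining the maximality of $\wh\delta$ among the image diameters of excursions at scales $(\eps^{\beta'},\eps^{\beta})$ with additional applications of~\ref{itm:H1} at scales $(\eps^{\beta},\eps^{\rho})$ and $(\eps^{\beta'},\eps^{\rho})$: each such hypothetical crossing would be contained in an excursion at the smaller scales, forcing its image to have diameter strictly exceeding $\wh\delta$, a contradiction. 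This conformal-distortion bookkeeping is the main obstacle in the proof; once it is in hand, $w'\in\varphi^{-1}(B(y,\delta^\xi))\subseteq B(z,\eps^\rho)$, completing the argument.
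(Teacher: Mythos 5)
Your setup and the first two thirds of the argument match the paper's proof: bound the number of excursions via~\ref{itm:H1}, apply~\ref{itm:H2} to $\wt\eta$ at a point of the image of an excursion, and use the Beurling estimate together with~\ref{itm:H2}(ii) to show that a Brownian motion started from $w \in B(y,\delta^\alpha)$ and stopped on $\wt\eta \cup \R$ lands on $\wt\eta(t;\delta)$ with probability $1-O(\delta^{(\alpha-\xi)/2})$, then transfer this by conformal invariance. (One detail you skip: you must choose the time $t$ so that the whole excursion $\wt\eta(t;\delta)$ is contained in $\varphi(e_{i^*})$ --- the paper picks $t_i$ with $\wt\eta(t_i;\delta_i/4)\subseteq\varphi(e_i)$ and runs~\ref{itm:H2} at scale $\min(\delta_i/4,\delta_0)$. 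Without this, the landing point $a\in\wt\eta(t;\delta)$ need not lie in $\varphi(e_{i^*})$, and its preimage need not lie in $B(z,\eps^\beta)$, which breaks the rest of the argument.)

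The genuine gap is your final step. The containment $\varphi^{-1}(B(y,\delta^\xi))\subseteq B(z,\eps^\rho)$ is both unnecessary and not established by your sketch: $\varphi$ is only a homeomorphism away from $\eta$, so the preimage of a ball of radius $\delta^\xi$ has no a priori size control, and your crossing-counting argument does not close --- an arc of $\eta$ crossing the annulus $B(z,\eps^\rho)\setminus B(z,\eps^\beta)$ need not contain an excursion between $\partial B(z,\eps^{\beta'})$ and $\partial B(z,\eps^\beta)$, so the maximality of $\wh\delta$ gives no contradiction. The paper avoids this entirely with a two-sided probability comparison: suppose $\varphi^{-1}(w)\notin B(z,\eps^\rho)$. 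On one hand, by conformal invariance the Brownian motion from $\varphi^{-1}(w)$ stopped on $\eta\cup\R$ hits $e_{i^*}$ with probability $1-O(\delta^{(\alpha-\xi)/2})$. On the other hand, $e_{i^*}\subseteq \overline{B(z,\eps^\beta)}$ and $\eta$ itself crosses the annulus between radii $\eps^\beta$ and $\eps^\rho$ (it meets $B(z,\eps)$ and tends to $\infty$), so the Beurling estimate bounds that same hitting probability by $O(\eps^{(\beta-\rho)/2})$. For $\eps_0,\delta_0$ small these are incompatible, hence $\varphi^{-1}(w)\in B(z,\eps^\rho)$ for every $w\in B(y,\delta^\alpha)$, which is all that is needed. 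You should replace your last paragraph with this contradiction argument.
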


\begin{figure}[h!]
\centering
\includegraphics[scale=0.85]{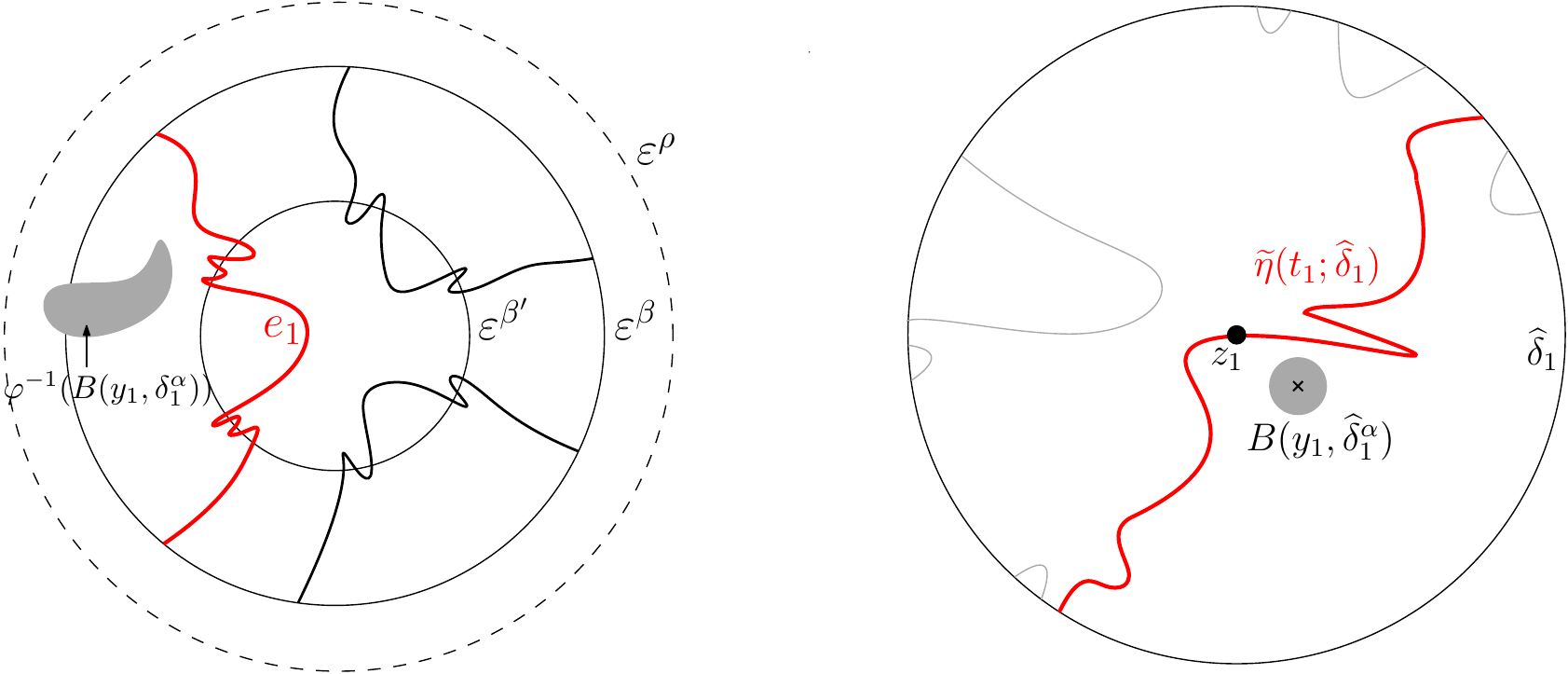}
\caption{On the left, $\eta$ makes three excursions $e_1, e_2, e_3$ between $\partial B(z,\eps^{\beta'})$ and $\partial B(z,\eps^\beta)$. On the right, we depict the parts of $\wt\eta$ in $B(z_1,\wh\delta_1)$, with $\wt\eta(t_1; \wh \delta_1)$ in red (which is a subset of $\varphi(e_1)$) and the other parts in grey.
Note that the definition of $\wh \delta_i$ is given in the proof of Lemma~\ref{lem:roundish}. Also note that here and in all  later figures, we depict the curve as simple for clarity, even if it is in fact allowed to be self-intersecting (but not self-crossing).}
\label{distortion}
\end{figure}

\begin{proof}
Note that for all $i\in[1,N]$, since $\varphi(e_i)$ has diameter $\delta_i$, there exists $t_i$ so that $\wt\eta(t_i) \in \varphi(e_i)$ and $\wt\eta(t_i; \delta_i/4)\subseteq \varphi(e_i)$ (recall that the notation $\wt\eta(t_i; \delta_i/4)$ is defined in~\eqref{1}). Let $z_i:=\wt\eta(t_i).$
Let $\wt K\subseteq\h$ be some compact set that contains  $\varphi(\eta\cap K)$. 
Applying~\ref{itm:H2} to $\wt\eta$ and $\wt K$ with some $1 < \xi < \alpha$ fixed, we get that there exists $\delta_0>0$ such that for every $i$, if we let $\wh\delta_i:=\min(\delta_i/4, \delta_0)$, then
 there exists $y_i\in B(z_i,\wh\delta_i)$ such that $B(y_i,\wh\delta_{i}^{\alpha})\cap \wt{\eta} =\emptyset$ and $B(y_i,2\wh\delta_{i}^{\alpha})\cap \wt{\eta} \not=\emptyset$.  Moreover, if we let $O_i$ be the connected component of $B(z_i,\wh\delta_i)\setminus \wt{\eta}$ that contains $B(y_i, \wh\delta_i^\alpha)$, then for any point $a$ in $\partial O \setminus \varphi(e_i)$, any path contained in $O\cup\{a\}$ which connects $y_i$ to $a$ must exit the ball $B(y_i, \wh\delta_i^\xi)$.

We now show that we can choose  $\eps_0, \delta_0 > 0$ small enough so that $\varphi^{-1}(B(y_i,\wh\delta_i^\alpha))\subseteq B(z, \eps^\rho)$ for all~$i$, which will imply the lemma.  If one starts a Brownian motion from any point $w\in B(y_i,\wh\delta_i^\alpha)$ and stops it upon hitting $\wt\eta\cup\R$, then in order for it not to stop in $\varphi(e_i)$, by the previous paragraph, the Brownian motion must exit the ball $B(y_i, \wh\delta_i^\xi)$.
It follows from the Beurling estimate that the probability that the Brownian motion stops in $\varphi(e_i)$ is $1-O(\wh\delta_{i}^{(\alpha-\xi)/2})$.  Since $\varphi^{-1}$ is conformal on $\h\setminus\wt\eta$, if one starts a Brownian motion $B$ from $\varphi^{-1}(w)$ and stops it upon hitting $\eta\cup\R$, then the probability that it stops in $e_i$ is also $1-O(\wh\delta_{i}^{(\alpha-\xi)/2})$. However, if $\varphi^{-1}(w)$ is outside of $B(z,\eps^\rho)$, then by the Beurling estimate, the probability that $B$ stops in $e_i$ is $O(\eps^{(\beta-\rho)/2})$.  This is impossible as long as $\eps_0, \delta_0 > 0$ are small enough. 
\end{proof}

Lemma~\ref{lem:roundish}  implies the following lemma.
\begin{lemma}\label{lem:variation}
For any $C>0$, there exist $\eps_0, \delta_0\in(0,1)$, such that for any $z\in K$ and $\eps \in (0,\eps_0)$ with $\eta \cap B(z,\eps) \neq \emptyset$, for any $\delta\le C\min(\wh\delta, \delta_0)$, 
there exist $y\in\delta^\alpha\Z^2$ such that  
\begin{align*}
B(y, 2\delta^\alpha)\cap \wt{\eta} \neq \emptyset\quad \text{and}\quad B(y, \delta^\alpha) \subseteq \varphi(B(z, \eps^\rho)).
\end{align*}
\end{lemma}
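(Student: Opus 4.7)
The lemma is obtained from Lemma~\ref{lem:roundish} by a lattice-rounding argument, combined with a slight enlargement of the Brownian motion / Beurling estimate in its proof. For $\delta \le C \min(\wh\delta, \delta_0)$, the idea is to apply Lemma~\ref{lem:roundish} at a scale $\wt\delta := c\delta$ for a constant $c = c(C,\alpha) \ge 1$ chosen large enough to leave room for the lattice error of size $\tfrac{\sqrt{2}}{2}\delta^\alpha$; by shrinking $\delta_0$ depending on $c$ and $C$, we ensure that $\wt\delta \le \min(\wh\delta, \delta_0)$ so that the proof of Lemma~\ref{lem:roundish} applies. This yields a point $y^* \in U := \varphi(B(z,\eps^\rho))$ with $B(y^*, \wt\delta^\alpha) \subseteq U$ and $B(y^*, 2\wt\delta^\alpha) \cap \wt\eta \ne \emptyset$.

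Next, pick $w \in \wt\eta \cap B(y^*, 2\wt\delta^\alpha)$ and let $y \in \delta^\alpha\Z^2$ be the lattice point nearest to $w$, so that $|y - w| \le \tfrac{\sqrt{2}}{2}\delta^\alpha$. Then $\dist(y, \wt\eta) \le |y-w| < 2\delta^\alpha$, which gives $B(y, 2\delta^\alpha) \cap \wt\eta \ne \emptyset$. For the other condition $B(y, \delta^\alpha) \subseteq U$, one has $|y - y^*| \le |y - w| + |w - y^*| \le \tfrac{\sqrt{2}}{2}\delta^\alpha + 2\wt\delta^\alpha$, so it suffices to show $\dist(y^*, \partial U) \ge \delta^\alpha + \tfrac{\sqrt{2}}{2}\delta^\alpha + 2\wt\delta^\alpha$. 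This is a stronger statement than what Lemma~\ref{lem:roundish} provides directly.

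The key step is to revisit the Brownian motion / Beurling argument used in the proof of Lemma~\ref{lem:roundish}. That argument shows that for any $w' \in B(y^*, \wt\delta^\alpha)$ lying in the connected component $O$ of $B(z_{i^*}, \wt\delta) \setminus \wt\eta$ containing $y^*$, Brownian motion from $w'$ hits $\varphi(e_{i^*})$ with probability $1 - O(\wt\delta^{(\alpha-\xi)/2})$, and hence $w' \in U$. Essentially the same argument works for $w'$ in a larger ball $B(y^*, r) \cap O$, as long as $r \ll \wt\delta^\xi$: the Beurling exit probability from the ball $B(y^*, \wt\delta^\xi)$ is $O((r/\wt\delta^\xi)^{1/2})$, which remains bounded away from $1$ for $r$ a constant multiple of $\wt\delta^\alpha$ (since $\alpha > \xi$ and $\wt\delta$ is small). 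This gives a strengthened version $B(y^*, C'\wt\delta^\alpha) \cap O \subseteq U$ for any fixed $C'$, once $c$ is taken large enough.

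The main obstacle is that the ball $B(y, \delta^\alpha)$ around the lattice point $y$ might cross $\wt\eta$ and enter a component of $U \setminus \wt\eta$ other than $O$; here we cannot directly invoke the above strengthened argument. This is handled by observing that $\varphi^{-1}(B(y,\delta^\alpha))$ is a connected open set in $\h$, and we apply the Beurling-based estimate to every point of this set simultaneously (using that each such point either lies in $O$ and falls under the strengthened argument, or can be connected to $O$ through a short path inside $U$ that stays in the ball). Choosing $c$ and then $\delta_0$ small enough depending on $C$ then produces the required lattice point $y$.
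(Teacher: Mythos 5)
Your strategy---keep the exponent $\alpha$ and enlarge the constant in Lemma~\ref{lem:roundish} via a strengthened Beurling estimate, then round to the lattice---is not the paper's. The paper instead passes to an auxiliary exponent $\alpha'>\alpha$: for $\delta\le C\min(\wh\delta,\delta_0)$ one has $\delta^{\alpha'}\le C^{\alpha'}\delta_0^{\alpha'-\alpha}\min(\wh\delta,\delta_0)^{\alpha}$, which is much smaller than $\min(\wh\delta,\delta_0)^{\alpha}$ once $\delta_0$ is small, so a lattice ball of radius $\delta^{\alpha'}$ can be placed near the point where $\wt\eta$ approaches the ball produced by Lemma~\ref{lem:roundish}; the lemma is then proved with $\alpha'$ in place of $\alpha$, which suffices because $\alpha>1$ is an arbitrary parameter throughout the section. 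Your route has a genuine gap, in two places. First, the opening reduction is impossible as stated: you set $\wt\delta=c\delta$ with $c\ge1$ and claim that shrinking $\delta_0$ forces $\wt\delta\le\min(\wh\delta,\delta_0)$. But the hypothesis only gives $\delta\le C\min(\wh\delta,\delta_0)$, so when $\wh\delta<\delta_0$ the quantity $c\delta$ can be as large as $cC\wh\delta>\wh\delta$, and shrinking $\delta_0$ does not shrink $\wh\delta$, which is determined by the curve. All one actually knows is that the target radius $\delta^{\alpha}$ is at most $C^{\alpha}$ times the radius $\min(\wh\delta,\delta_0)^{\alpha}$ supplied by Lemma~\ref{lem:roundish}; that lemma cannot be ``re-run at a larger scale.''

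Second, and more seriously: since $\delta^{\alpha}$ may exceed $\min(\wh\delta,\delta_0)^{\alpha}$ by a constant factor while $\dist(y^*,\wt\eta)<2\min(\wh\delta,\delta_0)^{\alpha}$, the ball $B(y,\delta^{\alpha})$ around your lattice point can contain strands of $\wt\eta$ and hence meet components of $\h\setminus\wt\eta$ other than $O$. Condition (ii) of~\ref{itm:H2} only constrains paths inside the single component $O$, so the harmonic-measure comparison through $\varphi$ gives no control over points on the far side of $\wt\eta$; nothing in the hypotheses prevents $\varphi$ from compressing a distant piece of $\eta$ into $B(y,\delta^{\alpha})$, in which case $\varphi^{-1}(B(y,\delta^{\alpha}))\not\subseteq B(z,\eps^{\rho})$. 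Your proposed remedy---that such points ``can be connected to $O$ through a short path inside $U$''---is circular, since membership in $U=\varphi(B(z,\eps^{\rho}))$ is exactly what is being proved. The paper's exponent trick is precisely what removes this obstruction: the lattice ball has radius $\delta^{\alpha'}\ll\dist(y^*,\wt\eta)$ and therefore stays inside the single component containing $y^*$, where the (correct, and indeed necessary) strengthened Beurling observation from your third paragraph does apply. I suggest grafting that observation onto the $\alpha'>\alpha$ device rather than attempting to control the other components.
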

\begin{proof}
Lemma~\ref{lem:roundish} implies that for $\eps_0, \delta_0$ small enough,  $\varphi(B(z,\eps^\rho))$ contains some $B(y, \min(\wh\delta, \delta_0)^\alpha)$ such that~\eqref{byd} is satisfied.

For any $\alpha'>\alpha$, one can always make $\delta_0$ small enough, so that  for any $\delta\le C\min(\wh\delta, \delta_0)$, we have $\delta^{\alpha'}\le \min(\wh\delta, \delta_0)^\alpha$. In this case, $\varphi(B(z,\eps^\rho))$  must also contain some ball $B(y', \delta^{\alpha'})$ where $y'\in\delta^{\alpha'} \Z^2$ and $B(y', 2\delta^{\alpha'})\cap \wt\eta\not=\emptyset$.
This proves the present lemma with $\alpha'$ instead of $\alpha$. However, since $\alpha$ is an arbitrary number in $(1,\infty)$, so is $\alpha'$, hence we are done.
\end{proof}

In the following lemma, we will compare the diameters $\delta_i$ of the excursions to the diameter of $\varphi(B(z,\eps))$, which will later allow us to apply Lemma~\ref{lem:variation} for $\delta=\diam(\varphi(B(z,\eps)))$.

\begin{lemma}\label{lem:order}
There exist $C_0>0$ and $\eps_0\in(0,1)$ such that, for any $z\in K$ and $\eps \in (0,\eps_0)$ with $\eta \cap B(z,\eps) \neq \emptyset$, for $C=M(2 C_0+1)$, we have
\begin{align}
\label{eq:delta}
\diam(\varphi(B(z,\eps)))\le (2C_0+1) \sum_{i=1}^N \delta_i \le C \wh\delta.
\end{align}
\end{lemma}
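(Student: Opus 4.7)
The second inequality is immediate: applying~\ref{itm:H1} at scale $\eps^{\beta'}$ with exponent $\beta/\beta'\in(0,1)$ yields $N\le M$ for $\eps$ sufficiently small, so
\[
(2C_0+1)\sum_{i=1}^N\delta_i\le(2C_0+1)N\wh\delta\le M(2C_0+1)\wh\delta,
\]
which is the desired inequality with $C=M(2C_0+1)$.

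For the first inequality, fix arbitrary $p,q\in B(z,\eps)\setminus\eta$. The crux is the intermediate estimate
\[
\dist\!\left(\varphi(p),\,\bigcup_{i=1}^N\varphi(e_i)\right)\le C_0\sum_{i=1}^N\delta_i,
\]
and its analogue for $q$. I would prove this by the same double Beurling strategy as in the proof of Lemma~\ref{lem:roundish}. Since $\eta\cap B(z,\eps)\ne\emptyset$ and $\eta\to\infty$, there is a subarc of $\eta$ that crosses the annulus $B(z,\eps^{\beta'})\setminus B(z,\eps)$; the Beurling estimate in the domain then says that a Brownian motion from $p$ avoids $\eta$ until it reaches $\partial B(z,\eps^{\beta'})$ with probability $O(\eps^{(1-\beta')/2})$. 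Conformality of $\varphi^{-1}$ on $\h\setminus\wt\eta$ transfers this bound to Brownian motion started at $\varphi(p)$ avoiding $\bigcup_i\varphi(e_i)$ until reaching $\varphi(\partial B(z,\eps^{\beta'}))$, and a reverse Beurling estimate in the image then forces $\varphi(p)$ to lie within Euclidean distance $C_0\sum_i\delta_i$ of $\bigcup_i\varphi(e_i)$, where the scale $\sum\delta_i$ appears as the chain diameter of the excursion images relative to the image boundary.

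Granted the distance estimate, the triangle inequality finishes the argument: choose $p^\ast,q^\ast\in\bigcup_i\varphi(e_i)$ realizing the distances to $\varphi(p)$ and $\varphi(q)$, so $|\varphi(p)-p^\ast|,|\varphi(q)-q^\ast|\le C_0\sum\delta_i$. Then bound $|p^\ast-q^\ast|\le\sum\delta_i$ using that each $\varphi(e_i)$ is a connected set of diameter $\delta_i$ whose two endpoints lie on the Jordan curve $\varphi(\partial B(z,\eps^\beta))$, and argue that the non-self-crossing topology of $\eta$ together with the homeomorphism property of $\varphi$ force the $\varphi(e_i)$ to assemble into a connected chain of diameter at most $\sum\delta_i$. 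Summing the three contributions gives $|\varphi(p)-\varphi(q)|\le(2C_0+1)\sum\delta_i$, as desired.

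Two points look delicate. First, converting the harmonic-measure bound coming from Beurling into a Euclidean distance bound requires identifying the correct Euclidean scale in the image: the naive scale $\diam\varphi(B(z,\eps^{\beta'}))$ is much too coarse, and the relevant scale must be the chain diameter $\sum\delta_i$ itself. Second, the topological assertion that the $\varphi(e_i)$ actually link into a chain of diameter at most $\sum\delta_i$ needs care, since a priori the endpoints of distinct excursions on $\varphi(\partial B(z,\eps^\beta))$ could be far apart; one expects the non-crossing hypothesis on $\eta$ together with the bound $N\le M$ to rule this out by a planar-topology argument.
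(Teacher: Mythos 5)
Your overall strategy---transferring a Beurling estimate from the preimage to the image through $\varphi$---is the same as the paper's, but both points you flag as ``delicate'' are genuine gaps, and the second rests on a claim that is false as stated. On the scale issue: you assert $\dist(\varphi(p),\bigcup_i\varphi(e_i))\le C_0\sum_i\delta_i$ but do not say to which set the ``reverse Beurling estimate in the image'' is applied, and no single application produces the scale $\sum_i\delta_i$. The way to make this work is to argue per excursion: if $\varphi(p)$ lies outside $B(\varphi(e_i),C_0\delta_i)$ for \emph{every} $i$, then, since $\varphi(e_i)$ is connected with diameter $\delta_i$, the Beurling estimate bounds the probability that a Brownian motion from $\varphi(p)$ stopped on $\wt\eta\cup\R$ ends on $\varphi(e_i)$ by $cC_0^{-1/2}$; summing over the $N\le M$ excursions gives at most $McC_0^{-1/2}$, contradicting the transferred lower bound $1-O(\eps^{(1-\beta')/2})$ once $C_0$ is large and $\eps_0$ small. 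This yields the stronger conclusion that $\varphi(p)$ lies within $C_0\delta_i$ of some \emph{single} $\varphi(e_i)$, which is what the rest of the argument needs. (Note also that $\varphi$ is only conformal on each component $D$ of $B(z,\eps^\beta)\setminus\eta$, so the conformal-invariance step must be run separately on each such component meeting $B(z,\eps)$.)

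On the chaining: the sets $\varphi(e_i)$ do \emph{not} in general assemble into a chain of diameter $\le\sum_i\delta_i$. Distinct excursions are joined to one another only through portions of $\eta$ lying outside the inner disk, and $\varphi$ may send those connecting portions far away, so two images $\varphi(e_i)$, $\varphi(e_j)$ of tiny diameter can be at macroscopic distance from each other; no planar-topology argument about the non-crossing of $\eta$ rescues this. The correct finish uses the connectedness of $\varphi(B(z,\eps))$ rather than any structure of the $\varphi(e_i)$: the first step gives $\varphi(B(z,\eps)\setminus\eta)\subseteq\bigcup_iB(\varphi(e_i),C_0\delta_i)$, hence $\varphi(B(z,\eps))$ is contained in the closure of this union; being connected, it lies in a single connected component of that closure, and any connected component of a union of $N$ sets of diameters at most $(2C_0+1)\delta_i$ has diameter at most $\sum_i(2C_0+1)\delta_i$. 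If the neighborhoods split into far-apart clusters, $\varphi(B(z,\eps))$ simply sits inside one of them and the bound still holds. Your reduction of the second inequality to \ref{itm:H1} is fine.
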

\begin{proof}
We would like to show that one can choose $C_0 > 0$ big enough and $\eps_0 > 0$ small enough, such that for all $\eps \in(0,\eps_0)$ with $\eta \cap B(z,\eps) \neq \emptyset$, if $D$ is any connected component of $B(z, \eps^\beta)\setminus \eta$ that intersects $B(z, \eps)$, then we have
\begin{align}\label{eq:inclusion}
\varphi\left(D \cap B(z, \eps)\right)\subseteq \bigcup_{i=1}^N B\!\left(\varphi(e_i), C_0 \delta_i\right),
\end{align}
where $B\!\left(\varphi(e_i), C_0 \delta_i\right)$ denotes the $C_0 \delta_i$ neighborhood of the set $\varphi(e_i)$.
If this is true for any such $D$, then we would have proven that
$\varphi(B(z, \eps))$ is included in the closure of the right hand side of~\eqref{eq:inclusion}.  
Note that each of the $B(\varphi(e_i),C_0 \delta_i)$ has diameter at most $(2C_0+1)\delta_i$, hence any connected component of the closure of the right hand side of~\eqref{eq:inclusion} has diameter at most $\sum_{i=1}^N (2C_0+1)\delta_i$. Since $\varphi(B(z,\eps))$ is connected, we see that~\eqref{eq:inclusion} implies that~\eqref{eq:delta} holds.

Now, let $D$ be a connected component of $B(z, \eps^\beta)\setminus \eta$ that intersects $B(z, \eps)$.  For any point $y\in D \cap B(z, \eps)$, the Beurling estimate implies that a Brownian motion started from $y$ and stopped upon hitting $\eta\cup \R$ hits $\bigcup_{i=1}^N  e_i $ with probability $1-O(\eps^{(1-\beta')/2})$. The map $\varphi$ is conformal on $D$, hence a Brownian motion $B$ started from $\varphi(y)$ and stopped upon hitting $\wt\eta\cup \R$ also hits $\bigcup_{i=1}^N \varphi(e_i)$ with probability $1-O(\eps^{(1-\beta')/2})$.  If $\varphi(y)$ is outside of $B(\varphi(e_i), C_0 \delta_i)$, then the Beurling estimate implies that the probability that $B$ ends at $\varphi(e_i)$ is smaller than $cC_0^{-1/2}$ where $c > 0$ is some absolute constant.  Hence if $\varphi(y)$ is outside of $\bigcup_{i=1}^N B(\varphi(e_i), C_0 \delta_{i})$, then the probability that $B$ stops in $\bigcup_{i=1}^N \varphi( e_i)$ is smaller than $N c C_0^{-1/2} \leq Mc C_0^{-1/2}$.  If we choose $C_0$ big enough and $\eps_0$ small enough, then this is impossible, hence~\eqref{eq:inclusion} is true, so we are done.
\end{proof}

\subsection{Proof of Proposition~\ref{prop:distortion}}
\label{sec:proof_of_proposition}

We are now ready to prove Proposition~\ref{prop:distortion}.
Recall that $1 \leq d<2$ is the upper Minkowski dimension of $\wt\eta$.
Choose some $r_0\in (1, 1/(d-1))$ (where we take the interval to be $(1,\infty)$ in the case $d=1$). 
Let $\delta_0 \in(0,1)$ be the one chosen in Lemma~\ref{lem:roundish}.
For any $r \in(0, r_0)$, let
\begin{align*}
\CE(r,\eps):=\{ z\in K: B(z,\eps)\cap \eta\not=\emptyset,\, \diam(\varphi(B(z,\eps)))\in (\eps^r, 2\eps^r] \}.
\end{align*}
We also define
\begin{align*}
&\CF(r_0, \eps):=\{ z\in K: B(z,\eps)\cap \eta\not=\emptyset,\, \diam(\varphi(B(z,\eps)))\le \eps^{r_0}\}, \\
&\CG(\delta_0, \eps):= \{ z\in K: B(z,\eps)\cap \eta\not=\emptyset,\, \diam(\varphi(B(z,\eps)))\ge \delta_0 \}.
\end{align*}
If we take $r_n:=r_0-n \log 2/ \log \eps^{-1}$ for all $0\le n\le m$ where $m=r_0 \log \eps^{-1} /\log 2$, then we have
\begin{align*}
\{ z\in K: B(z,\eps)\cap \eta\not=\emptyset \}=\CG(\delta_0, \eps) \cup \CF(r_0, \eps)\cup \bigcup_{n=0}^m \CE(r_n,\eps).
\end{align*}

\begin{lemma}\label{lem:set1}
For any $\iota>0$, the area of  $\CG(\delta_0,\eps)$ is $O(\eps^{2-\iota})$.
\end{lemma}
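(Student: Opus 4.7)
\medskip

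The plan is to observe that this estimate is essentially trivial and in fact the set $\CG(\delta_0,\eps)$ is empty for all sufficiently small $\eps$, by uniform continuity of $\varphi$. The bound $O(\eps^{2-\iota})$ is presumably stated this way only so that it can be combined uniformly with the (genuinely nontrivial) bounds on $\CE(r_n,\eps)$ and $\CF(r_0,\eps)$ that will appear in the subsequent lemmas.

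First I would choose a compact set $K' \subseteq \h$ containing the $1$-neighborhood of $K$, so that $B(z,\eps) \subseteq K'$ for all $z \in K$ and all $\eps \in (0,1]$. Since $\varphi$ is a continuous map on $\h$ and $K'$ is compact, $\varphi|_{K'}$ is uniformly continuous. In particular, applied to the fixed constant $\delta_0 \in (0,1)$ produced by Lemma~\ref{lem:roundish}, there exists $\eps_1 > 0$ such that for all $w_1, w_2 \in K'$ with $|w_1 - w_2| < 2\eps_1$ one has $|\varphi(w_1) - \varphi(w_2)| < \delta_0/2$.

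Therefore, for every $\eps \in (0,\eps_1]$ and every $z \in K$ we have
\[
\diam(\varphi(B(z,\eps))) \;=\; \sup_{w_1,w_2 \in B(z,\eps)} |\varphi(w_1) - \varphi(w_2)| \;\le\; \delta_0/2 \;<\; \delta_0,
\]
which shows that $z \notin \CG(\delta_0, \eps)$. Consequently $\CG(\delta_0,\eps) = \emptyset$ for all $\eps \in (0,\eps_1]$, so its area vanishes identically for small $\eps$ and is trivially $O(\eps^{2-\iota})$ for every $\iota > 0$.

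There is no genuine obstacle here — the only thing to note is that the argument uses continuity of $\varphi$ on the full domain $\h$, not just on the complement of $\eta$, and it is applied on a compact set strictly containing $K$ so that the balls $B(z,\eps)$ stay inside a fixed compact set on which uniform continuity applies.
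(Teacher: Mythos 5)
Your argument is correct, and it is genuinely different from (and more elementary than) the one in the paper. You exploit the fact that $\delta_0$ is a \emph{fixed} constant, determined before $\eps \to 0$, so that plain uniform continuity of the homeomorphism $\varphi$ on a compact neighbourhood of $K$ forces $\diam(\varphi(B(z,\eps))) < \delta_0$ uniformly in $z \in K$ once $\eps$ is small, i.e.\ $\CG(\delta_0,\eps) = \emptyset$. This uses nothing beyond the standing hypothesis that $\varphi$ is a homeomorphism of $\h$ and that $K$ is compactly contained in $\h$. The paper instead runs the same covering scheme it needs anyway for Lemma~\ref{lem:set2}: it combines Lemma~\ref{lem:order} and Lemma~\ref{lem:variation} to show that for each $z \in \CG(\delta_0,\eps)$ the image $\varphi(B(z,\eps^\rho))$ contains a ball $B(y,\delta_0^\alpha)$ with $y$ in a finite grid set $Y$ of cardinality $O\bigl((\delta_0^\alpha)^{-d-\iota'}\bigr)$, so that the balls $B(\varphi^{-1}(y),\eps^\rho)$, $y \in Y$, cover $\CG(\delta_0,\eps)$ and its area is $O(\eps^{2\rho})$. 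Your route buys simplicity and a stronger conclusion (the set is eventually empty, so the corresponding term in the final expectation bound vanishes outright); the paper's route buys uniformity of exposition with the genuinely quantitative Lemma~\ref{lem:set2}, where the covering argument cannot be avoided. One small correction: the closed $1$-neighbourhood of $K$ need not lie in $\h$ if $K = [a_1,a_2]\times[b_1,b_2]$ has $b_1 < 1$; you should take $K'$ to be the closed $r$-neighbourhood of $K$ for some $r < \dist(K,\R)$ and restrict to $\eps < r$, which is harmless since only small $\eps$ matter.
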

\begin{proof}
For any $z\in \CG(\delta_0,\eps)$,   $ \diam(\varphi(B(z,\eps))) \ge \delta_0 $. We also know by Lemma~\ref{lem:order} that $\diam(\varphi(B(z,\eps))) \le C \wh\delta$. This implies $\delta_0 \le C\min( \wh\delta, \delta_0)$.
We can therefore apply Lemma~\ref{lem:variation} and deduce that $\varphi\left(B(z,\eps^\rho)\right)$ contains some ball  $B\left(y, \delta_0^\alpha\right)$
where $y$ belongs to the following set
\begin{align*}
Y:=\left\{ y\in \!\left(\delta_0^{ \alpha} \Z^2\right) \cap \varphi(K): B(y, 2\delta_0^\alpha) \cap \wt\eta\not= \emptyset \right\}.
\end{align*}
Therefore, the union of the balls $B(\varphi^{-1}(y), \eps^\rho)$ for all $y\in Y$ covers $\CG(\delta_0,\eps)$.
Since $\wt\eta$ has upper Minkowski dimension $d$, we have $|Y|=O\!\left((\delta_0^\alpha)^{-d-\iota'}\right)$ for any $\iota'>0$.
The area of  $\CG(\delta_0,\eps)$ is therefore at most $\pi\eps^{2\rho}  |Y| =\eps^{2\rho} O\!\left((\delta_0^\alpha)^{-d-\iota'}\right)$. Since $\rho, \alpha$ are arbitrarily close to $1$ and $\iota, \iota'$ are  arbitrarily close to $0$, we get the bound in the lemma.
\end{proof}

\begin{lemma}
\label{lem:set2}
For any $r \in(0,r_0)$ and any $\iota>0$, the area of  $\CE(r,\eps)$ is $O(\eps^{2-rd-\iota})$.
\end{lemma}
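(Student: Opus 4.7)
The plan is to mimic the strategy of Lemma~\ref{lem:set1}, combining Lemma~\ref{lem:variation} with the upper Minkowski dimension estimate on $\wt\eta$. First I observe that for any $z \in \CE(r,\eps)$ the lower bound in the definition of $\CE(r,\eps)$ together with Lemma~\ref{lem:order} yields $\eps^r < \diam(\varphi(B(z,\eps))) \le C_0\wh\delta$, where $C_0$ is the constant $C$ appearing in Lemma~\ref{lem:order}. Hence $\wh\delta \ge \eps^r/C_0$, and for $\eps$ small enough one also has $\eps^r \le C_0 \delta_0$. Consequently the choice $\delta := \eps^r$ satisfies $\delta \le C_0 \min(\wh\delta,\delta_0)$, which makes it admissible in Lemma~\ref{lem:variation}.

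Applying Lemma~\ref{lem:variation} with this $\delta = \eps^r$ produces a lattice point $y \in \eps^{r\alpha}\Z^2$ such that $B(y, 2\eps^{r\alpha}) \cap \wt\eta \neq \emptyset$ and $B(y, \eps^{r\alpha}) \subseteq \varphi(B(z,\eps^\rho))$. In particular $\varphi^{-1}(y) \in B(z,\eps^\rho)$, so $z \in B(\varphi^{-1}(y), \eps^\rho)$. Consequently $\CE(r,\eps)$ is covered by the union $\bigcup_{y \in Y} B(\varphi^{-1}(y), \eps^\rho)$, where
\[ Y := \{y \in (\eps^{r\alpha}\Z^2) \cap \varphi(K) : B(y, 2\eps^{r\alpha}) \cap \wt\eta \neq \emptyset\}. \]

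Finally, since $\wt\eta$ has upper Minkowski dimension $d$, for any $\iota' > 0$ we have $|Y| = O((\eps^{r\alpha})^{-d-\iota'})$, so
\[ \area(\CE(r,\eps)) \le \pi\eps^{2\rho}|Y| = O(\eps^{2\rho - r\alpha(d+\iota')}). \]
Since $\rho$ and $\alpha$ may be chosen arbitrarily close to $1$ and $\iota'$ arbitrarily close to $0$, this gives the desired bound $O(\eps^{2-rd-\iota})$ for any $\iota > 0$. The one small technical point is to verify the uniform admissibility of the choice $\delta = \eps^r$ in Lemma~\ref{lem:variation} across all $z \in \CE(r,\eps)$ and sufficiently small $\eps$, but this follows routinely from the bound $\wh\delta \ge \eps^r/C_0$ derived above. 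I do not expect any serious obstacle; the argument is parallel to Lemma~\ref{lem:set1}, with the role of $\delta_0$ there replaced by the scale $\eps^r$ dictated by the diameter constraint in the definition of $\CE(r,\eps)$.
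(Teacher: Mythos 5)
Your proof is correct and follows essentially the same route as the paper: apply Lemma~\ref{lem:variation} with $\delta=\eps^r$ (admissible since $\eps^r<\diam(\varphi(B(z,\eps)))\le C\wh\delta$ by Lemma~\ref{lem:order}), cover $\CE(r,\eps)$ by the balls $B(\varphi^{-1}(y),\eps^\rho)$, and count the lattice points $y$ via the Minkowski dimension of $\wt\eta$. The only cosmetic difference is that the paper first discards $\CE(r,\eps)\cap\CG(\delta_0,\eps)$ so as to get $\diam(\varphi(B(z,\eps)))\le\delta_0$, whereas you verify $\eps^r\le C\delta_0$ directly for small $\eps$; both observations serve the same purpose of checking admissibility in Lemma~\ref{lem:variation}.
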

\begin{proof}
We already know that the  area of  $\CE(r,\eps) \cap \CG(\delta_0,\eps)$ is $O(\eps^{2\rho})$ for any $\rho<1$. Hence we only need to consider the case $ \diam(\varphi(B(z,\eps))) < \delta_0$.
Lemma~\ref{lem:order} implies that $\diam(\varphi(B(z,\eps))) \le C \wh\delta$. Therefore we have that $ \eps^r\le\diam(\varphi(B(z,\eps))) \le C \min(\wh\delta, \delta_0)$.
We can therefore apply Lemma~\ref{lem:variation} and deduce that $\varphi\left(B(z,\eps^\rho)\right)$ contains some ball  $B\left(y, \eps^{r\alpha}\right)$ where $y$ belongs to the following set
\begin{align*}
Y_r:=\left\{ y\in \!\left(\eps^{r \alpha}\Z^2\right) \cap \varphi(K): B(y, 2\eps^{r\alpha}) \cap \wt\eta\not= \emptyset \right\}.
\end{align*}
Therefore, the union of the balls $B(\varphi^{-1}(y), \eps^\rho)$ for all $y \in Y_r$ covers $\CE(r,\eps)\setminus  \CG(\delta_0,\eps)$. 
Since $\wt\eta$ has upper Minkowski dimension $d$, we have $|Y_r|=O\!\left((\eps^{r\alpha})^{-d-\iota'}\right)$ for any $\iota'>0$.
The  area of  $\CE(r,\eps)$ is therefore at most $\pi\eps^{2\rho} |Y_r|= \eps^{2\rho} O\!\left((\eps^{r\alpha})^{-d-\iota'}\right)$. Since we can choose $\rho, \alpha$ arbitrarily close to $1$ and $\iota, \iota'$ arbitrarily close to $0$, we get the bound in the lemma.
\end{proof}

For $z$ uniformly chosen in $K$, we can compute the following expectation:
\begin{align*}
&\E\!\left[\diam\!\left(\varphi(B(z,\eps))\right) \mathbf{1}_{d(z,\eta)<\eps} \right]\\
\le & \sum_{n=0}^m \p\!\left[z\in \CE(r_n, \eps)  \right] 2 \eps^{r_n} +  \p\!\left[z\in \CF(r_0, \eps) \right] \eps^{r_0} + \p\!\left[z\in \CG(\delta_0, \eps) \right] \diam(\varphi(K)). 
\end{align*}
Applying Lemma~\ref{lem:set2} to bound the probabilities in the sum above, Lemma~\ref{lem:set1} to bound the probability in the last term above, and using the trivial bound of $1$ for the probability in the middle term above, we see that
\begin{align}\label{eq:exp3}
\E\!\left[\diam\!\left(\varphi(B(z,\eps))\right) \mathbf{1}_{d(z,\eta)<\eps} \right] \leq \sum_{n=0}^m O( \eps^{2-r_n d - \iota}) \, 2\eps^{r_n}+ \eps^{r_0} + O( \eps^{2-\iota}).
\end{align}
Note that $\eps^{r_n}=\eps^{r_0} 2^n$, hence the right hand side of~\eqref{eq:exp3} is equal to
\begin{align}\label{bound1}
 O\!\left( \eps^{2-\iota} \right) \sum_{n=0}^m   (\eps^{r_0} 2^n)^{1-d}  + \eps^{r_0} + O( \eps^{2-\iota}).
\end{align}
If $d=1$, then choose $r_0=2$. For any $\iota' >\iota$, \eqref{bound1} is at most
\begin{align*}
m\, O(\eps^{2-\iota})  + \eps^{r_0} + O( \eps^{2-\iota}) =O((\log \eps^{-1}) \eps^{2-\iota})= O(\eps^{2-\iota'})=O(\eps^{2/d-\iota'}).
\end{align*}
 Note that $\iota, \iota'$ can be chosen arbitrarily close to $0$, hence the above equation proves Proposition~\ref{prop:distortion} for $d=1$.
 
Otherwise if $d\in(1,2)$, then choose $r_0=2/d$, which is in the interval $(1, 1/(d-1))$. Then~\eqref{bound1} is equal to 
\begin{align*}
O\!\left(\eps^{2/d-\iota} \right).
\end{align*}
This completes the proof of Proposition~\ref{prop:distortion}.  \qed

\section{Checking the hypotheses for SLE} \label{sec:non_tracing}
In this section, we fix $\kappa\in(0,8)$ and let $\eta$ be an  $\SLE_\kappa$ curve in $\h$ from $0$ to $\infty$. By definition, $\eta$ is non-self-crossing. By \cite{rs2005basic}, we have that $\eta$ a.s.\ has upper Minkowski dimension at most $d$ for any $d > 1+\kappa/8\in(1,2)$ and zero Lebesgue measure.  (In fact, by \cite{lr2015minkowski}, $\eta$ a.s.\ has Minkowski dimension $1+\kappa/8$, but we will not need this stronger result.)  We will show that $\eta$ a.s.\ satisfies~\ref{itm:H1} and~\ref{itm:H2}.

\subsection{Hypothesis~\ref{itm:H1}}
The following lemma says that $\eta$ a.s.\ satisfies~\ref{itm:H1}.
\begin{lemma}
\label{lem:crossing}
For each $\beta \in (0,1)$ and compact rectangle $K \subseteq \h$, there a.s.\ exist $M>0$ and $\eps_0>0$, such that for all $\eps \in (0,\eps_0)$, and for all $z\in K$, the number of excursions  of $\eta$ between $\partial B(z, \eps^{\beta})$ and $\partial B(z,\eps)$ is at most $M$.\end{lemma}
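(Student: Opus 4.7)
The plan is to prove a one-point crossing estimate for $\SLE$ and then combine it with a dyadic covering of $K$ and the Borel-Cantelli lemma. The key input is the bound
\[
\P\bigl[\eta \text{ makes at least } n \text{ crossings of } A(z, r, R)\bigr] \leq C (r/R)^{bn}
\]
for all $z \in K$, all $0 < r < R$, and all $n \geq 1$, where $C, b > 0$ depend only on $\kappa$ and $K$; call this $(\star)$. I would establish $(\star)$ by iterating the conformal Markov property. Let $S_j$ denote the first exit of $B(z, R)$ by $\eta$ after the $j$-th entry of $\eta$ into $B(z, r)$. Conditional on $S_j < \infty$ and $\CF_{S_j}$, the continuation of $\eta$ is a fresh $\SLE_\kappa$ from $\eta(S_j) \in \partial B(z, R)$ in the slit domain $\h \setminus K_{S_j}$. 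Mapping this domain to $\h$ by $\psi_j$ sending the tip to $0$ and $\infty$ to $\infty$, Koebe's distortion theorem together with the fact that $z$ sits at distance $R$ from the tip (and at comparable distance from the remaining parts of $\partial(\h \setminus K_{S_j})$, using that $K$ is bounded away from $\R$) implies that $\psi_j(B(z, r))$ and $\psi_j(B(z, R))$ lie in Euclidean balls whose radii are in ratio $\asymp r/R$. The standard one-point estimate for $\SLE_\kappa$ of \cite{rs2005basic} with exponent $2-d$, where $d = 1+\kappa/8<2$, then yields $\P[S_{j+1} < \infty \giv \CF_{S_j}] \leq C'(r/R)^{2-d}$, and iterating over $j$ gives $(\star)$ with $b = 2-d$.

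Granting $(\star)$, the covering step runs as follows. Set $\eps_k = 2^{-k}$ and cover $K$ with $N_k = O(2^{2k})$ points $\{z_i^{(k)}\}$ of mutual spacing at most $\eps_k$. For $k$ large enough so that $\eps_k < \eps_{k+1}^\beta/4$, any $\eps \in [\eps_{k+1}, \eps_k)$, and any $z \in K$, choose $z_i^{(k)}$ with $|z - z_i^{(k)}| \leq \eps_k$. Then
\[
B(z, \eps) \subseteq B\bigl(z_i^{(k)}, 2\eps_k\bigr) \qquad \text{and} \qquad B\bigl(z_i^{(k)}, \tfrac{1}{2}\eps_{k+1}^\beta\bigr) \subseteq B(z, \eps^\beta).
\]
A short direct argument then shows that every excursion of $\eta$ at $z$ of scale $(\eps, \eps^\beta)$ produces a distinct crossing of the annulus $A\bigl(z_i^{(k)}, 2\eps_k, \tfrac{1}{2}\eps_{k+1}^\beta\bigr)$: the entry into $B(z,\eps)$ forces $\eta$ into $B(z_i^{(k)}, 2\eps_k)$; reaching $\partial B(z, \eps^\beta)$ forces $\eta$ out of $B(z_i^{(k)}, \tfrac{1}{2}\eps_{k+1}^\beta)$; and consecutive excursions are separated by passages into $\h \setminus B(z, \eps^\beta) \subseteq \h \setminus B(z_i^{(k)}, \tfrac{1}{2}\eps_{k+1}^\beta)$. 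Applying $(\star)$ to this annulus, whose radius ratio is $\Theta(2^{-k(1-\beta)})$, and then union-bounding over the $N_k$ centers gives
\[
\P\bigl[\text{some } z_i^{(k)} \text{ has } \geq M \text{ crossings}\bigr] = O\!\left(2^{(2 - bM(1-\beta))k}\right).
\]
Choosing $M > 2/(b(1-\beta))$ makes the right-hand side summable in $k$, so Borel-Cantelli supplies a random $k_0$ such that for all $k \geq k_0$ no $z_i^{(k)}$ is bad. Setting $\eps_0 := \eps_{k_0}$ then finishes the proof.

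The main obstacle is $(\star)$. The delicate point is preserving the geometric ratio $r/R$ under the iterated Loewner maps $\psi_j$: Koebe's distortion theorem controls $\diam \psi_j(B(z, r))$ by $r|\psi_j'(z)|$ and the distance from the tip to $\psi_j(z)$ by $R|\psi_j'(z)|$ only when $z$ remains at macroscopic Euclidean distance from the \emph{entire} boundary of $\h \setminus K_{S_j}$, and this needs to be argued uniformly across the past. Once $(\star)$ is in place, the dyadic covering and Borel-Cantelli steps are entirely routine.
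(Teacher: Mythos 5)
Your overall architecture is the same as the paper's: a one-point estimate saying that the probability of $n$ crossings of the annulus between $\partial B(z,r)$ and $\partial B(z,R)$ decays like $(r/R)^{bn}$, followed by a covering of $K$ at each scale, a union bound, Borel--Cantelli, and a comparison step showing that excursions of $\eta$ between $\partial B(z,\eps^\beta)$ and $\partial B(z,\eps)$ for arbitrary $z\in K$ are dominated by crossings of a slightly fatter annulus centered at a nearby lattice point. That second half of your argument is correct and is essentially what the paper does (the paper uses $\eps_n=1/n$ and the lattice $\eps_n\Z^2$ rather than dyadic scales, which is immaterial). The difference is in how the key estimate $(\star)$ is obtained: the paper does not prove it, but cites it directly as \cite[Theorem~5.7]{MR2981906}, which gives exactly the bound $O(\eps^{c_0(1-\beta)k})$ for $k$ excursions between $\partial B(z,4\eps)$ and $\partial B(z,\eps^\beta/2)$.

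Your proposed derivation of $(\star)$, however, has a genuine gap, and you have correctly located it yourself. After the curve has made its first crossing, the point $z$ lies within distance $r$ of the hull $K_{S_j}$ (the curve has already entered $B(z,r)$), so $\dist(z,\partial(\h\setminus K_{S_j}))\le r$. Koebe distortion then only controls $\psi_j$ on a ball of radius comparable to $r$ around $z$; it gives no control on the geometry of $\psi_j(B(z,R))$, and the claim that the images of $B(z,r)$ and $B(z,R)$ sit inside balls whose radii are in ratio $\asymp r/R$ is false in general (the image of $\partial B(z,R)$ can be badly distorted, and parts of $B(z,r)$ may even be absorbed into the hull). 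Relatedly, the one-point estimate of \cite{rs2005basic} with interior exponent $2-d$ applies to a point at macroscopic conformal distance from the boundary; here $\psi_j(z)$ is microscopically close to $\R$, where the relevant exponent is a boundary exponent, and in any case the estimate cannot be invoked without first establishing the conformal geometry that the Koebe step was supposed to supply. The standard proofs of $(\star)$ avoid this issue entirely: one shows that, uniformly over the past, a single crossing of a fixed-modulus annulus costs a factor $\rho<1$ (via extremal length or harmonic measure together with the quantitative non-space-filling property of $\SLE_\kappa$ for $\kappa<8$), and then multiplies over dyadic sub-annuli and over successive crossings. Since $(\star)$ is a known result --- it is precisely what the paper cites --- your argument is repairable by citation, but as a self-contained derivation the proof of $(\star)$ as written would fail.
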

\begin{proof}
For any fixed $z$, the probability that $\eta$ makes $k$ excursions between $\partial B(z, 4\eps)$ and $\partial B(z,\eps^\beta/2)$  decays exponentially. A rough upper bound of this probability can be found in \cite[Theorem~5.7]{MR2981906}, which is
\begin{align}\label{eq:pcrossing}
O\!\left( \eps^{c_0 (1-\beta) k} \right),
\end{align}
where $c_0>0$ is some constant depending only on $\kappa$.
One can find $k=M$ such that~\eqref{eq:pcrossing} is  $O(\eps^4)$.

We can now apply the Borel-Cantelli arguments. Let $\eps_n=1/n$. Let $F_n$ be the event that there exists  $z\in K \cap \eps_n \Z^2$ so that there are more than $M$ excursions between  $\partial B(z, 4\eps_n)$ and $\partial B(z,\eps^\beta_n/2)$. 
By the union bound, the probability of $F_n$ is $O(\eps_n^2)$, which is summable in $n$. This implies that
there a.s.\ exists $n_0 \in \N$ such that for all $n \geq n_0$, and all $z\in K \cap \eps_n \Z^2$, $\eta$ makes no more than $M$ excursions between  $\partial B(z, 4\eps_n)$ and $\partial B(z,\eps^\beta_n/2)$. 

We can now pick $\eps_0:=\eps_{n_0}/2$.  For each $\eps \in (0,\eps_0)$, one can find $n \geq n_0$, such that $\eps_{n+1}\le \eps <\eps_{n}$.
For all $z\in K$, there exist  $z_0\in \eps_n \Z^2$ such that $ B(z, \eps)\subseteq B(z_0, 4\eps_n)$ and $B(z,\eps^\beta)\supset B(z_0, \eps^\beta_n/2)$. The number of crossings between  $\partial B(z, \eps^{\beta})$ and $\partial B(z,\eps)$ is therefore at most the number of crossings between  $\partial B(z_0, 4\eps_n)$ and $\partial B(z_0,\eps^\beta_n/2)$, which is at most $M$.
\end{proof}

\subsection{Hypothesis~\ref{itm:H2}}\label{sec4.2}

In this section, our goal is to show that $\eta$ a.s.\ satisfies~\ref{itm:H2}. In Section~\ref{subsec1}, we will first reduce the proof of~\ref{itm:H2} to that  of Proposition~\ref{lem:contains_ball} and then further boil it down to the proof of Lemma~\ref{lem:borel-cantelli}. In Section~\ref{subsec2}, we will focus on proving Lemma~\ref{lem:borel-cantelli}.

\subsubsection{Outline of the proof}\label{subsec1}
In order to prove that $\eta$ a.s.\ satisfies~\ref{itm:H2}, we will show that it is enough to prove Proposition~\ref{lem:contains_ball}.
Heuristically speaking, \ref{itm:H2} says that one can find a ball of size $\delta^\alpha$ near an excursion $\eta(t;\delta)$ which is in a certain sense far away from the other parts of $\eta$ in $B(\eta(t), \delta)$. In Proposition~\ref{lem:contains_ball}, we show that one can find a small ball near $\eta(t;\delta)$ which is shielded from the other parts of $\eta$ by a well-chosen arc.

Throughout, we shall assume that we have fixed a compact rectangle $K$ and the parameters $\alpha>\gamma>\xi>\lambda>1$, $\mu>1$.
We also introduce the following notation: For $t>0$ and $\delta>0$, the excursion $\eta(t; \delta)$ is defined in~\eqref{1}. For any excursion $e$ of the type $\eta(t; \delta)$, let  $B(w,r)$ be a ball that intersects~$e$.  For each $y\in B(w,r) \setminus e$, first note that, the intersection of $\partial B(w,r)$ with the boundary of the connected component of $B(w,r)\setminus e$ containing $y$ is a closed arc, then let $A\!\left(e, w,r,y\right)$ be the open arc obtained from taking away the two endpoints of this closed arc. See Figure~\ref{find-ball}.  We can now state Proposition~\ref{lem:contains_ball}.

\begin{proposition}
\label{lem:contains_ball}
There a.s.\ exists $\delta_0>0$ such that for any $\delta \in (0,\delta_0)$, for any $t>0$ such that $\eta(t) \in K$, one can find $w \in B(\eta(t),\delta/2)$ and $y$ that satisfy the following condition:
\begin{equation}\label{condition1}
 B(y,\delta^{\alpha}) \subseteq B(w, \delta^{\gamma})\setminus\eta, \,
B(y, 2\delta^{\alpha} ) \cap\eta\not=\emptyset, \,
 B(w, \delta^\gamma)\cap \eta(t;\delta)\not=\emptyset,
 \, A(\eta(t;\delta), w, \delta^{\xi}, y )\cap\eta=\emptyset.
\end{equation}
\end{proposition}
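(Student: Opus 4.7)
The plan is to prove Proposition~\ref{lem:contains_ball} via a Borel--Cantelli argument at dyadic scales. Set $\delta_n = 2^{-n}$ and cover $K$ by a grid $\Lambda_n := K \cap (\delta_n^\mu \Z^2)$. For each $z \in \Lambda_n$, let $E_{n,z}$ denote the bad event that $\eta$ enters $B(z,\delta_n/4)$ yet, for some $t$ with $\eta(t)\in B(z,\delta_n/4)$, no pair $(w,y)$ satisfying~\eqref{condition1} (with $\delta = \delta_n$) can be found. The key estimate I would prove is $\p(E_{n,z}) = O(\delta_n^s)$ for an exponent $s > 2\mu$, so that $|\Lambda_n|\sup_z\p(E_{n,z})$ is summable in $n$. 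Borel--Cantelli then yields an a.s.\ random $n_0$ beyond which all $E_{n,z}$ fail. To handle all $t$ and $\delta$, not just grid points and dyadic scales, I would observe that given any $t$ with $\eta(t)\in K$ and any $\delta\in[\delta_n,2\delta_n]$ with $n\geq n_0$, picking the grid point $z$ nearest to $\eta(t)$ and applying the grid statement at scale $\delta_n$ produces $(w,y)$ which still satisfies~\eqref{condition1} at $(t,\delta)$ after a tiny worsening of the exponents; this is absorbed by the strict inequalities $\alpha>\gamma>\xi>\lambda>1$.

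The probability bound $\p(E_{n,z})=O(\delta_n^s)$ would be assembled from two ingredients. The first is the standard interior one-point estimate $\p[\eta\cap B(z,\delta_n/4)\neq\emptyset] = O(\delta_n^{1-\kappa/8-\iota})$ for any $\iota>0$, which follows from \cite{rs2005basic} (and is strengthened in \cite[Theorem~5.7]{MR2981906}). Since this alone does not beat the $O(\delta_n^{-2\mu})$ factor from $|\Lambda_n|$, I then need to prove that the conditional failure probability given the entering event is polynomially small in $\delta_n$ with an arbitrarily large exponent. I would achieve this via a multi-scale amplification over the $\Theta(\log \delta_n^{-1})$ dyadic scales between $\delta_n^\gamma$ and $\delta_n^\xi$. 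At each such scale, the conformal Markov property of $\SLE_\kappa$ combined with Lemma~\ref{lem:crossing} produces, with a scale-independent probability $c>0$, a ``clean side'' event in which a piece of the excursion $\eta(t;\delta_n)$ traverses the corresponding sub-annulus while a designated angular sector on one of its sides is avoided by all of $\eta$ (the pair $(w,y)$ can then be read off from the sector geometry). Using conformal Markov to decorrelate the trials at different scales, the probability that every scale fails is at most $(1-c)^{\Theta(\log\delta_n^{-1})}=O(\delta_n^p)$ for arbitrarily large $p$.

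The main obstacle I anticipate lies in the self-intersecting range $\kappa\in(4,8)$, where past and future returns of $\eta$ may come arbitrarily close to any point on the curve, which is precisely what $A(\eta(t;\delta),w,\delta^\xi,y)\cap\eta=\emptyset$ has to rule out. To construct the ``clean side'' event at a single scale I would combine the Beurling estimate with target-invariance of $\SLE_\kappa$: conditional on $\eta$ up to the first entry into the relevant sub-annulus, the continuation is an $\SLE_\kappa$ in the unexplored component, and the Beurling estimate bounds, by a positive power of $\delta_n$, the probability that a later macroscopic excursion on the chosen side returns into the designated sector. Lemma~\ref{lem:crossing} caps the number of excursions across each annulus by a deterministic constant, so the union bound over excursions costs only a constant factor. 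For $\kappa\in(0,4]$ the situation is substantially simpler, since $\eta$ is a.s.\ simple and the only part of $\eta$ that could intersect $A$ is $\eta(t;\delta)$, which is excluded by the definition of $A$; the proposition then follows from a direct application of the Beurling estimate to the two complementary components of a simple curve.
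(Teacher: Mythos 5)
Your outer skeleton (reduce to a per-grid-point event at scales $\delta_n$ with failure probability $O(\delta_n^s)$, $s>2\mu$, then Borel--Cantelli and a deterministic transfer from grid points and grid scales to arbitrary $t$ and $\delta$) matches the paper, which reduces Proposition~\ref{lem:contains_ball} to Lemma~\ref{lem:borel-cantelli} in exactly this way. The divergence, and the gap, is in the core probability estimate. Your multi-scale amplification requires the ``clean side'' events at the $\Theta(\log\delta_n^{-1})$ scales to be decorrelated by the conformal Markov property, but each such event constrains the \emph{entire} curve --- the sector must be avoided by the past of $\eta$ before the excursion and by all later returns --- so it is not measurable with respect to the curve between consecutive entry times, and the standard peeling argument giving $(1-c)^{\Theta(\log\delta_n^{-1})}$ does not apply. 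Moreover, the Beurling estimate does not bound ``the probability that a later macroscopic excursion returns into the designated sector'' by a positive power of $\delta_n$: Beurling controls Brownian hitting, and the SLE hitting probability of a single arc adjacent to the boundary of the hull of $\eta[0,t_i]$ can be of constant order (its harmonic measure from $\infty$ need not be small). This is precisely why the paper places $2R$ candidate arcs along the excursion at mutual distance at least $\delta^\lambda$, maps them out by $g_{t_i}$, organizes them into a tree, and invokes the Rezaei--Zhan multi-point estimate (Lemma~\ref{lem:RZ}) together with a harmonic-measure computation showing $r_j/l_j=O(\delta^{(\xi-\lambda)/4})$: hitting one arc is not unlikely, but hitting $L$ well-separated arcs costs $\delta^{L(8/\kappa-1)(\xi-\lambda)/4}$, and a pigeonhole over the $2R$ candidates combined with the reversibility of $\SLE_\kappa$ handles the past. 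Your proposal contains no mechanism for the past of the curve at all.

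A second, smaller but genuine error is your treatment of $\kappa\in(0,4]$. The arc $A(\eta(t;\delta),w,\delta^\xi,y)$ is an arc of the circle $\partial B(w,\delta^\xi)$, and even for a simple curve the strands of $\eta$ outside the excursion $\eta(t;\delta)$ can re-enter $B(w,\delta^\xi)$ and cross that arc; simplicity only prevents touching, not proximity. Hypothesis~\ref{itm:H2} is non-trivial precisely because strands of $\SLE_\kappa$ (including $\kappa=4$) come very close to one another, so the simple case cannot be dispatched ``by the definition of $A$'' and the Beurling estimate; the paper's argument is uniform over $\kappa\in(0,8)$ for this reason.
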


\begin{figure}[h!]
\centering
\includegraphics[scale=0.85]{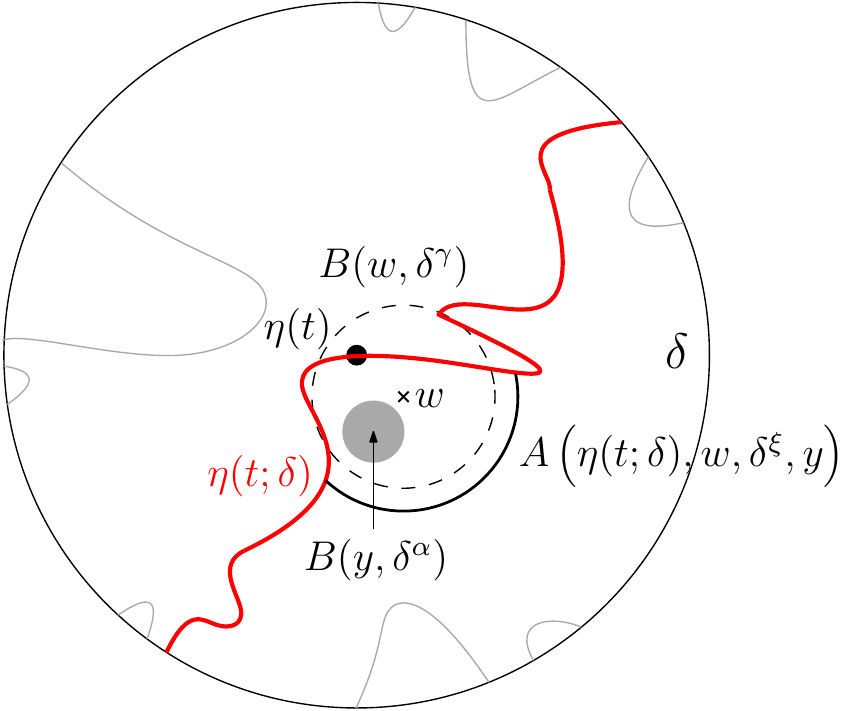}
\caption{Illustration of Proposition~\ref{lem:contains_ball}. The bold black arc represents $A\!\left(\eta(t;\delta), w,\delta^\xi,y\right)$.}
\label{find-ball}
\end{figure}

\begin{lemma}\label{lem:imply_H2}
If Proposition~\ref{lem:contains_ball} holds, then $\eta$ satisfies~\ref{itm:H2}.
\end{lemma}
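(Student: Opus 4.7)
The plan is to apply Proposition~\ref{lem:contains_ball} with parameters chosen in terms of the hypothesis parameters of~\ref{itm:H2}, and then read off both clauses of~\ref{itm:H2} from the four conclusions of the proposition, with clause~(ii) coming from a proof by contradiction that uses the arc supplied by Proposition~\ref{lem:contains_ball} as a topological shield. Given the compact rectangle $K$ and exponents $\alpha' > \xi' > 1$ from~\ref{itm:H2}, I would choose parameters $\alpha > \gamma > \xi > \lambda > 1$ to feed into Proposition~\ref{lem:contains_ball} satisfying $\alpha = \alpha'$, $\xi < \xi'$, $\gamma > \xi'$, and any $\lambda \in (1, \xi)$. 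Applying the proposition (and possibly shrinking the resulting $\delta_0$ further) supplies, a.s., a threshold $\delta_0 > 0$ such that for every $\delta \in (0, \delta_0)$ and every $t > 0$ with $\eta(t) \in K$, we can find $w \in B(\eta(t), \delta/2)$ and $y$ satisfying~\eqref{condition1}, with $\delta$ additionally small enough that $\delta^\gamma < \delta/2$ and $\delta^{\xi'} + \delta^\gamma < \delta^\xi$.

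Clause~(i) of~\ref{itm:H2} is then immediate: from the first relation of~\eqref{condition1} together with $|w - \eta(t)| < \delta/2$ and $\delta^\gamma < \delta/2$ we get $B(y, \delta^{\alpha'}) \subseteq B(w, \delta^\gamma) \setminus \eta \subseteq B(\eta(t), \delta) \setminus \eta$, and $B(y, 2\delta^{\alpha'}) \cap \eta \neq \emptyset$ is the second relation. For clause~(ii), let $O$ be the component of $B(\eta(t), \delta) \setminus \eta$ containing $y$, let $a \in \partial O \setminus \eta(t;\delta)$, and let $\pi \colon [0, 1] \to O \cup \{a\}$ be any path with $\pi(0) = y$ and $\pi(1) = a$. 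I would argue by contradiction, assuming $\pi \subseteq B(y, \delta^{\xi'})$. Since $|y - w| < \delta^\gamma$ and $\delta^{\xi'} + \delta^\gamma < \delta^\xi$, the path $\pi$ lies in $B(w, \delta^\xi)$. Writing $O'$ for the component of $B(w, \delta^\xi) \setminus \eta(t;\delta)$ containing $y$, the restriction $\pi|_{[0,1)}$ is a connected subset of $B(w, \delta^\xi) \setminus \eta(t;\delta)$ containing $y$, hence lies in $O'$, and by continuity $a \in \overline{O'}$. Set $A := A(\eta(t;\delta), w, \delta^\xi, y)$, so that $\partial O' \subseteq \overline{A} \cup \bigl(\eta(t;\delta) \cap \overline{B(w, \delta^\xi)}\bigr)$ with the two endpoints of $A$ lying on $\eta(t;\delta)$.

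I would then rule out every location for $a$. If $a \in \partial B(w, \delta^\xi)$, then $\delta^\xi = |a - w| \leq |a - y| + |y - w| < \delta^{\xi'} + \delta^\gamma < \delta^\xi$, a contradiction. If $a \in \eta(t;\delta)$, this contradicts $a \notin \eta(t;\delta)$. The only remaining option is $a \in O'$, which together with $a \in \partial O \subseteq \eta \cup \partial B(\eta(t), \delta)$ and $a \in B(w, \delta^\xi) \subseteq B(\eta(t), \delta)$ forces $a \in O' \cap (\eta \setminus \eta(t;\delta))$. To close off this last case I would use the fourth relation $A \cap \eta = \emptyset$ of~\eqref{condition1} together with the non-self-crossing of $\eta$ to conclude that $\eta \cap O' = \emptyset$: any subarc of $\eta \setminus \eta(t;\delta)$ with a point in the open set $O'$ would have to cross $\partial O'$, but the only candidate points of entry are either on the open arc $A$ (excluded by condition (d)) or on $\eta(t;\delta)$ (where a transversal entry is forbidden by non-self-crossing), whence $a \notin \eta$, completing the desired contradiction.

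The step I expect to be the main obstacle is precisely this final topological claim $\eta \cap O' = \emptyset$: for $\kappa \in (0, 4]$ it is immediate from the simplicity of $\SLE_\kappa$, whereas for $\kappa \in (4, 8)$ one must argue carefully that tangential self-intersections of $\eta$ with $\eta(t;\delta)$ sitting on $\partial O'$ cannot produce loops of $\eta \setminus \eta(t;\delta)$ in the interior of $O'$. The Jordan-type bound of $O'$ formed by $\overline{A}$ together with the relevant subarc of $\eta(t;\delta)$, combined with the disjointness $A \cap \eta = \emptyset$ supplied by condition~(d), should provide exactly the topological control needed to conclude that any branch of $\eta \setminus \eta(t;\delta)$ touching $\partial O'$ remains on the side of $\eta(t;\delta)$ opposite to $O'$.
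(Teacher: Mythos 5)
Your proposal is correct and is essentially the contrapositive form of the paper's own argument: the paper deduces directly from $A(\eta(t;\delta),w,\delta^\xi,y)\cap\eta=\emptyset$ that any admissible path from $y$ to $a$ must cross the arc $A$, which lies at distance at least $\delta^\xi-\delta^\gamma>\delta^{\xi'}$ from $y$, while you assume the path stays in $B(y,\delta^{\xi'})$ and rule out each possible location of $a$. The only difference is one of detail: you make explicit (and correctly justify via $A\cap\eta=\emptyset$ together with the non-self-crossing property) the claim that no strand of $\eta\setminus\eta(t;\delta)$ can enter $O'$, a point the paper's one-line justification leaves implicit.
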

\begin{proof}
Let $y$ be a point chosen according to Proposition~\ref{lem:contains_ball}.  Then it immediately satisfies condition (i) of~\ref{itm:H2}. It then remains to check that it also satisfies condition (ii) of~\ref{itm:H2}.
 Let $O$ denote the connected component of $B(\eta(t), \delta) \setminus \eta$ which contains $y$.
Fix $\xi'\in(\xi, \alpha)$.
We want to check that, for any point $a$ in $\partial O\setminus \eta(t;\delta)$, any path from $y$ to $a$ which is contained in $O\cup\{a\}$ must exit $B(y, \delta^{\xi'})$. This statement exactly describes the condition (ii) of~\ref{itm:H2}, because it in fact holds for arbitrary $\alpha>\xi'>1$, since $\alpha>\xi>1$ can be chosen arbitrarily.
By the last condition of~\eqref{condition1}, such a path must cross $A(e,w,\delta^\xi, y)$. Moreover, since $d(y,w)< \delta^\gamma$, it follows that this path also exits $B(y, \delta^{\xi'})$ for some $\xi'\in(\xi, \alpha)$. This completes the proof.
\end{proof}

The proof of Proposition~\ref{lem:contains_ball} builds on  the following key lemma. Assuming Lemma~\ref{lem:borel-cantelli}, we can conclude using Borel-Cantelli arguments.

\begin{lemma}
\label{lem:borel-cantelli}
Let $E(z,\delta)$ be the event that $\eta \cap B(z, \delta^\mu) \neq \emptyset$ and for any excursion $e$ of $\eta$ between $\partial B(z, \delta^\mu)$ and $\partial B(z, \delta)$, there exist $w\in B(z,\delta/2)$ and $y$ that satisfy the following condition:
\begin{equation}\label{condition2}
B(y,\delta^\alpha)\subseteq B(w, \delta^\gamma)\setminus\eta,\, B(y,2\delta^\alpha)\cap\eta\not=\emptyset, \, B(w,\delta^\gamma)\cap e\not=\emptyset, \, A(e, w,\delta^\xi, y)\cap\eta=\emptyset.
\end{equation}
For every $z \in K$, we have that $E(z,\delta)$ holds with probability $1-O(\delta^4)$.
\end{lemma}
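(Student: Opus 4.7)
The plan is to first reduce to a per-excursion statement using Lemma~\ref{lem:crossing}, and then to prove that per-excursion statement by a multi-scale construction that exploits the domain Markov property and scale invariance of $\SLE_\kappa$. Since $E(z,\delta)^c$ is vacuous unless $\eta$ enters $B(z,\delta^\mu)$, I restrict to that case. Applying (the proof of) Lemma~\ref{lem:crossing} to the scale pair $(\delta^\mu,\delta)$, the number of excursions of $\eta$ between $\partial B(z,\delta^\mu)$ and $\partial B(z,\delta)$ is at most some constant $M=M(\mu)$ with probability $1-O(\delta^4)$. A union bound then reduces the lemma to showing that, for any one fixed such excursion $e$, the probability that no pair $(w,y)$ satisfies~\eqref{condition2} for $e$ is $O(\delta^4)$.

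\textbf{Multi-scale construction of the pocket.} For the fixed excursion $e$, I would choose a sequence of stopping times $T_1<T_2<\cdots<T_N$ along $e$, corresponding to $\eta$ reaching a geometrically spaced sequence of intermediate scales between $\delta^\mu$ and $\delta$, with $N=O(\log\delta^{-1})$. At each $T_j$, the domain Markov property allows me to conformally map the complement of $\eta[0,T_j]$ to $\h$ and regard the continuation as an $\SLE_\kappa$; I then define an event $G_j$, measurable with respect to the image SLE's behavior in the annulus corresponding to the $j$-th scale, on which a valid pair $(w,y)$ for $e$ can be constructed at that scale. Using scale invariance and stationarity of $\SLE_\kappa$, I would argue that the conditional probability $\p(G_j\mid\CF_{T_j})$ is bounded below by a constant $c>0$ independent of $j$ and $\delta$. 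Hence the probability that no $G_j$ holds is at most $(1-c)^N = O(\delta^{c'})$ for some $c'>0$, which becomes $O(\delta^4)$ by taking enough scales. Throughout, Koebe's theorem controls the distortion of the conformal maps used to invoke scale invariance.

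\textbf{Main obstacle.} The hardest step will be establishing the uniform lower bound $\p(G_j\mid\CF_{T_j})\ge c$, namely that $\SLE_\kappa$ has uniformly positive probability at each scale of producing a pocket of the type required by~\eqref{condition2}: an interior ball $B(y,\delta^\alpha)$ inside a complementary component that is cleanly separated from the rest of $\eta$ by a full arc of $\partial B(w,\delta^\xi)$. For $\kappa\in(0,4)$, where $\SLE_\kappa$ is simple, this should be relatively direct: the curve has constant probability of traversing any fixed conformal rectangle in a given direction, carving out a wedge-shaped pocket, and the Beurling estimate bounds the chance that later returns of $\eta$ reach the shielding arc from outside. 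For $\kappa\in[4,8)$ the curve has double points and forms bubbles, requiring a more delicate construction; my best guess is to use the imaginary-geometry / flow-line framework to arrange for the pocket to be enclosed by a single excursion of $\eta$ that closes off a small bubble, with Beurling and Koebe estimates then controlling the probability that later parts of $\eta$ invade the pocket through the shielding arc.
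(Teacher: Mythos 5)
There is a genuine gap, and it sits exactly where your proposal is vaguest. The condition~\eqref{condition2} requires the shielding arc $A(e,w,\delta^\xi,y)$ to be disjoint from \emph{all} of $\eta$ --- including the parts of the curve traced long after (and long before) the excursion $e$. Your multi-scale bound $\p(\text{no }G_j\text{ holds})\le(1-c)^N$ only works if each $G_j$ is measurable with respect to $\CF_{T_{j+1}}$, but the event ``a valid pocket exists at scale $j$'' is not adapted: after the excursion exits $B(z,\delta)$, the curve still has \emph{uniformly positive} (not Beurling-small) probability of re-entering $B(z,3\delta/4)$ and destroying any single prescribed arc of radius $\delta^\xi$ centered at a point of $B(z,\delta/2)$; and for $\kappa\in(4,8)$ the time-reversal issue is just as bad, since the part of $\eta$ \emph{before} the excursion can also hit the arc. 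So the factorization into independent per-scale trials fails, and your appeal to ``Beurling bounds the chance that later returns reach the shielding arc'' is not correct for a single arc.

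The paper's proof is built precisely to overcome this. For each excursion $e_i$ it places $2R$ candidate centers $W_i$ on $e_i\cap B(z,\delta/2)$ at mutual distance $\ge\delta^\lambda$ (the pockets $B(y,\delta^\alpha)$ themselves come deterministically from a Green's-function estimate, Lemma~\ref{lem:greens_function}, not from a constant-probability construction). The key quantitative input is the multi-point hitting estimate of Rezaei--Zhan (Lemma~\ref{lem:RZ}): organizing the images of the shielding arcs under $g_{t_i}$ into a tree and splitting into the cases of a wide vertex or a deep branch, one shows the future $\eta|_{[t_i,\infty)}$ can hit at most $R-1$ of the $2R$ arcs outside an event of probability $O(\delta^{L(8/\kappa-1)(\xi-\lambda)/4})=O(\delta^4)$ --- the smallness comes from the separation exponent $\xi>\lambda$ raised to a large power $L$, not from independence across scales. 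The same statement applied to the time-reversal of $\eta$ (using reversibility) controls the past, and a pigeonhole over the $2R$ candidates produces one arc avoided by the entire curve. Your outline contains none of these three ingredients (many well-separated candidates, a multi-point polynomial hitting estimate replacing per-scale independence, and reversibility plus pigeonhole for the past), so as written it does not yield the lemma.
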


\begin{proof}[Proof of Proposition~\ref{lem:contains_ball}]

Let $\delta_n:=1/n$. We define $F_n$ to be the event that there exists  $z\in K\cap \delta_n^\mu \Z^2$ such that $E(z,\delta_n)$ does not hold.  By Lemma~\ref{lem:borel-cantelli} and a union bound for all  $z\in K\cap \delta_n^\mu \Z^2$, we get that 
\[ \p[F_n] = O(\delta_n^{-2\mu} \times \delta_n^{4}).\]
The quantity above is summable whenever $\mu > 1$ is sufficiently close to $1$.
By the Borel-Cantelli lemma, we know that there a.s.\ exists $n_0 \in \N$ such that for all $n \geq n_0$, $F_n$ does not occur.
This means that for all $n \geq n_0$ and all $z\in K \cap \delta_n^\mu \Z^2$ such that $\eta$ intersects $B(z, \delta_n^\mu)$, for any excursion $e$ of $\eta$ between $\partial B(z, \delta_n^\mu)$ and $\partial B(z,\delta_n)$, one can find $w\in B(z,\delta_n)$ and $y$ that satisfy the condition~\eqref{condition2} for $e$ and $\delta=\delta_n$.

For any $\delta \in (0,\delta_{n_0})$, we can find $n \geq n_0$ such that $\delta_{n}\le \delta/4 \le \delta_{n-1}$. For any $t>0$ such that $\eta(t) \in K$, there must exist $z\in K \cap \delta_{n}^\mu \Z^2$ such that $\eta(t; \delta)$ contains an excursion $e$  between $B(z, \delta_{n}^\mu)$ and $B(z, \delta_{n})$.
We know that there exist $w\in B(z,\delta_n)$ and $y$ such that $B(y,\delta_n^\alpha)\subseteq B(w, \delta_n^\gamma)\setminus \eta$, $B(y,2\delta_n^\alpha)\cap \eta\not=\emptyset, B(w,\delta_n^\gamma)\cap e\not=\emptyset$ and $ A(e, w,\delta_n^\xi, y)\cap\eta=\emptyset$.
This last condition implies that 
\[ A(\eta(t;\delta), w,\delta_n^\xi, y) = A(e, w,\delta_n^\xi, y).\]
It is then not difficult to change $\delta_n$ into $\delta$ in the statement (since this is a standard step similar to what we did in the proof of the previous lemma, we omit it here). We can then conclude.
\end{proof}

\subsubsection{Proof of  Lemma~\ref{lem:borel-cantelli}}\label{subsec2}
This section is dedicated to the proof of Lemma~\ref{lem:borel-cantelli}.
We will first prove Lemma~\ref{lem:greens_function} and deduce that for $\delta >0$ small enough, for any ball $B(w,\delta^\gamma)$, one can a.s.\ find a ball $B(y, \delta^\alpha)$ contained in $B(w,\delta^\gamma)\setminus\eta$ such that $B(y,2\delta^\alpha)\cap \eta\not=\emptyset$.
Then we will need to find such a pair $(w,y)$ which satisfies the additional condition that $ B(w, \delta^\gamma)$ intersects $\eta(t;\delta)$ and $A\!\left(\eta(t;\delta), w, \delta^{\xi}, y\right)$ does not intersect $\eta$.

\begin{lemma}
\label{lem:greens_function}
There a.s.\ exists $\delta_0>0$ such that for any $\delta \in (0,\delta_0)$ and any $w\in K$, there exists $y$ such that 
\[ B(y,\delta^\alpha)\subseteq B(w,\delta^\gamma)\setminus\eta. \]
\end{lemma}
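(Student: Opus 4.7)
The plan is to show that the $\delta^\alpha$-tubular neighborhood of $\eta$, denoted $\eta_{\delta^\alpha} := \{z \in \h : \dist(z,\eta) < \delta^\alpha\}$, does not cover the ball $B(w, \delta^\gamma)$ for any $w \in K$ once $\delta$ is small enough. The bad event (``no ball of radius $\delta^\alpha$ fits in $B(w,\delta^\gamma) \setminus \eta$'') is equivalent to $B(w,\delta^\gamma - \delta^\alpha) \subseteq \eta_{\delta^\alpha}$, which forces $|\eta_{\delta^\alpha} \cap B(w,\delta^\gamma)| \gtrsim \delta^{2\gamma}$. The idea is to upper-bound this area using SLE Green's function/Minkowski estimates and show it must be strictly smaller than the area of $B(w,\delta^\gamma)$, uniformly in $w \in K$.

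First I would use the classical one-point Green's function estimate for $\SLE_\kappa$: for any $d > 1+\kappa/8$ and $z$ in a compact subset of $\h$ away from $\partial \h$, $\p[\dist(z,\eta) < \epsilon] \leq C\epsilon^{2-d}$ uniformly in $z$ (the case of $z$ near $\partial\h$ being controlled by the boundary Green's function and handled analogously). Integrating over $B(w,\delta^\gamma)$ yields $\E[|\eta_{\delta^\alpha} \cap B(w,\delta^\gamma)|] \leq C\delta^{2\gamma + \alpha(2-d)}$, and Markov's inequality gives $\p[\text{bad at } w] \leq C\delta^{\alpha(2-d)}$. To pass from fixed $w$ to all $w \in K$, discretize $K$ by a grid of spacing $\delta^\gamma/4$ containing $O(\delta^{-2\gamma})$ points; for any $w \in K$, the nearest grid point $w_0$ satisfies $B(w_0,\delta^\gamma/2) \subseteq B(w,\delta^\gamma)$, so a good event for every $w_0$ at radius $\delta^\gamma/2$ implies the good event for every $w \in K$ at radius $\delta^\gamma$. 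A union bound gives $\p[\text{some grid bad}] \leq C\delta^{\alpha(2-d)-2\gamma}$; together with Borel--Cantelli along $\delta_n = 2^{-n}$ and a routine passage from $\delta_n$ to arbitrary $\delta \in (0,\delta_{n_0})$ as in the proof of Lemma~\ref{lem:crossing}, this would conclude.

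The main obstacle is that this crude Markov bound is useful only when $\alpha(2-d) > 2\gamma$, i.e., $\alpha > 2\gamma/(2-d)$, which can fail when $\alpha$ is close to $\gamma$ or $\kappa$ is close to $8$. The remedy is to use a sharper \emph{local} Minkowski content estimate $|\eta_\epsilon \cap B(w,r)| \leq C r^d \epsilon^{2-d}$ uniformly in $w \in K$ (with an a.s.\ finite random $C$), reflecting that $\eta$ restricted to a ball of radius $r$ is still $d$-dimensional rather than $2$-dimensional. One establishes this by conditioning on $\eta$ hitting $B(w,2r)$ (an event of probability $\lesssim r^{2-d}$) and invoking the scaling and Markov property of SLE: the restricted curve then behaves like a rescaled $\SLE_\kappa$ of scale $r$, whose $d$-dimensional Minkowski content in its target ball is $O(r^d)$. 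Alternatively, one may bound higher moments of $|\eta_\epsilon \cap B(w,r)|$ using the $k$-point Green's function of SLE. Under this sharper bound, setting $r = \delta^\gamma$ and $\epsilon = \delta^\alpha$ gives $|\eta_{\delta^\alpha} \cap B(w,\delta^\gamma)| \leq C\delta^{\gamma d + \alpha(2-d)}$; since $\alpha > \gamma$ strictly, $\gamma d + \alpha(2-d) - 2\gamma = (2-d)(\alpha-\gamma) > 0$, so the tube area is $o(\delta^{2\gamma})$ and the bad event is impossible for $\delta$ small, uniformly in $w \in K$.
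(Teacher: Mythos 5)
Your overall skeleton --- a per-point estimate, a grid of spacing comparable to $\delta^\gamma$, a union bound, and Borel--Cantelli along a sequence $\delta_n$ --- matches the paper's, and you correctly diagnose that the naive route (one-point Green's function plus Markov) fails because $\alpha(2-d)$ need not exceed $2\gamma$; indeed the lemma must hold for $\alpha$ arbitrarily close to $\gamma$, since \ref{itm:H2} is required for all $\alpha>\xi>1$. The problem is that your remedy is not established. The uniform local bound $|\eta_\eps\cap B(w,r)|\le C r^d\eps^{2-d}$, holding almost surely with a single random constant valid \emph{simultaneously} for all $w\in K$ and all scales, is a genuinely strong statement: it amounts to a uniform upper bound on the mass the SLE occupation/content measure assigns to balls (essentially H\"older continuity of the natural parametrization), and the conditioning-plus-scaling sketch you give only controls the \emph{first moment} $\E[|\eta_\eps\cap B(w,r)|]$ --- precisely the quantity that already failed you in your first paragraph. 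Passing from the mean to an a.s.\ bound uniform over the $O(\delta^{-2\gamma})$ grid points at each scale again requires a high-order polynomial decay of the per-point failure probability, i.e.\ higher moments; your parenthetical ``alternatively, bound higher moments via the $k$-point Green's function'' is not an alternative but the missing step, and it is left unexecuted. Note also that the paper deliberately uses only the upper Minkowski dimension bound from \cite{rs2005basic}, not content or natural-parametrization results, so your remedy imports machinery the argument is designed to avoid.

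What the paper does instead is shorter and bypasses tube areas entirely: fix $\zeta\in(\gamma,\alpha)$ and, for a large integer $m$, place $m$ balls of radius $2\delta^\alpha$ inside $B(w,\delta^\gamma/4)$ with mutual distances at least $\delta^\zeta$ (possible since $\zeta>\gamma$). The failure of the conclusion at $w$ forces $\eta$ to hit every one of these balls, and the $m$-point Green's function estimate \cite[Proposition~2.3]{rezaei2016green} bounds that probability by a constant times $\delta^{m(2-d)(\alpha-\zeta)}$. Taking $m$ large makes this decay faster than any prescribed power of $\delta$, which beats the union bound over the grid and lets Borel--Cantelli go through with no further input. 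If you prefer to keep your tube-area formulation, the same multi-point estimate is exactly what you would use to control the $m$-th moment of $|\eta_{\delta^\alpha}\cap B(w,\delta^\gamma)|$; either way, the multi-point Green's function is the ingredient your write-up is missing.
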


We remark that as a consequence of Lemma~\ref{lem:greens_function}, for any $w\in K$ such that $B(w,\delta^\gamma)\cap \eta\neq\emptyset$, knowing that we can find a ball $B(y,\delta^\alpha)$ contained in $B(w,\delta^\gamma)\setminus\eta$,  we can then also move the ball $B(y,\delta^\alpha)$ inside $B(w,\delta^\gamma)\setminus\eta$ so that we also have
$B(y,\delta^\alpha)\subseteq B(w,\delta^\gamma)\setminus\eta$ and $B(y, 2\delta^\alpha) \cap \eta\not=\emptyset. $

\begin{proof}
Fix $\zeta\in(\gamma, \alpha)$.
For any $m \in \N$, if $\delta >0$ is small enough, then for any $w \in K$ we can place $m$ balls of radius $2\delta^\alpha$ in $B(w, \delta^\gamma/4)$ such that their mutual distances are greater than $\delta^\zeta$. Let $\wt E(w,\delta)$ be the event that $\eta$ intersects all of these $m$ balls.  Using the $n$-point Green's function for chordal SLE (see \cite[Proposition~2.3]{rezaei2016green}), we get that the probability of $\wt E(w,\delta)$ is at most an absolute constant times $\delta^{m(2-d)(\alpha-\zeta)}$.  We then choose $m$ big enough so that $m(2-d)(\alpha-\zeta) +2\gamma >2$.

We will now use the Borel-Cantelli lemma to complete the proof.  Let $\delta_n:=1/n$. Let $\wt F_n$ be the event that there exists $w\in K \cap (\delta_n^{\gamma}/4) \Z^2$ such that the event $\wt E(w, \delta_n)$ holds.  By the union bound, the probability of $\wt F_n$ is at most a constant times $n^{-m(2-d)(\alpha-\zeta)+2\gamma}$ which is at most a constant times~$n^{-2}$. The sequence~$n^{-2}$ is summable, hence there a.s.\ exists $n_0 \in \N$ such that for all $n\ge n_0$, $\wt F_n$ does not occur.  That is, for all $n\ge n_0$ and $w\in K \cap (\delta_n^{\gamma}/4) \Z^2$, there exists $B(y, 2\delta_n^{\alpha})$ which is contained in $B(w, \delta_n^{\gamma}/4)$ and does not intersect $\eta$.  This implies that for all $\delta \in (0,\delta_{n_0})$ and $w\in K$, the ball $B(w, \delta^\gamma)$ a.s.\ contains some ball $B(y, \delta^\alpha)$ that does not intersect $\eta$.  

\end{proof}

The main idea in proving Lemma~\ref{lem:borel-cantelli} is the following:
Given an excursion $\eta(t; \delta)$, we place a number of small balls near this excursion which are respectively shielded by disjoint arcs (thanks to Lemma~\ref{lem:greens_function}). Then we will show that the future and past (w.r.t.\ time $t$) parts of $\eta$ have a very small probability to hit all of the shielding arcs. We first establish this result for the future part of $\eta$ and then show that it simultaneously holds for the past of $\eta$, using reversibility of $\SLE$ \cite{zhan2008reverse,ms2016imag3}. (We expect that one can prove this result without using reversibility, but reversibility simplifies the proof.)
More concretely, we will make use of the spatial Markov property of SLE and rely on fine estimates of the deformation of the relevant arcs under conformal maps.

Let us now start to prove Lemma~\ref{lem:borel-cantelli}. We will prove a series of lemmas, and the proof of Lemma~\ref{lem:borel-cantelli} will be completed at the very end of the paper. Suppose that $\eta$ makes $N$ excursions between $\partial B(z, \delta^\mu)$ and $\partial B(z, \delta)$ and we denote them by $e_1,\ldots, e_N$ (in chronological order).  For each $i$, let $s_i<t_i$ be such that $e_i=\eta[s_i, t_i]$. Due to the spatial Markov property of SLE, each $t_i$ is a stopping time for the filtration $\CF_t$ generated by $\eta|_{[0,t]}$.

Choose some constant $L$ such that
\begin{align}\label{eq:L}
L (8/\kappa-1) (\xi-\lambda)/4 >4.
\end{align}
Let $R:=L^{L+1}$.
For each $i\in[1,N]$, we will place a number of balls with radius $\delta^\xi$ centered at points in $W_i$, where  $W_i$ is defined in the following way:
Let $\wt e_i:=e_i \cap  B(z, \delta/2)$.  We want to choose $W_i$ to be a set of $2R$ points that are all in $\wt e_i$ with mutual distances at least $\delta^\lambda$. We also want to choose $W_i$ in a measurable way w.r.t.\ $e_i$ as a set (i.e., without the time parameterization). We first choose $w_1\in \wt e_i$ to be the leftmost point of $\wt e_i$, breaking ties by taking the point with the smallest $y$-coordinate.  For any $j\in[1,2R-1]$, assume that we have chosen the first $j$ points, we will choose $w_{j+1}$ to be the leftmost point in $\wt e_i\setminus \bigcup_{k=1}^j B(w_k, \delta^\lambda)$, breaking ties by taking the point with the smallest $y$-coordinate. 
Note that $\wt e_i\setminus \bigcup_{k=1}^j B(w_k, \delta^\lambda)$ is non-empty, because $\wt e_i$ is a connected set with diameter $\delta$ and each of the connected components of $\bigcup_{k=1}^j B(w_k, \delta^\lambda)$ has diameter at most $4R \delta^\lambda$ which is less than $\delta$ when $\delta >0$ is small enough. We have thus defined $W_i$.

For any $w\in W_i$, Lemma~\ref{lem:greens_function} implies that there exists some point $y$ such that 
\begin{align*}
B(y,\delta^\alpha)\subseteq B(w,\delta^\gamma)\setminus\eta.
\end{align*}
First note that since $e_i\subseteq \eta$, we have $B(y,\delta^\alpha)\subseteq B(w,\delta^\gamma)\setminus e_i$.
Since $w\in e_i$, we can then move the ball $B(y, \delta^\alpha)$ inside $B(w,\delta^\gamma)\setminus e_i$ so that
\begin{align}\label{cond_yw}
B(y,\delta^\alpha)\subseteq B(w,\delta^\gamma)\setminus e_i, \, \overline{B(y,3\delta^\alpha/2)} \cap e_i \not=\emptyset.
\end{align}
The set of $y$ satisfying~\eqref{cond_yw} is compact, hence we can choose $y^i(w)$ to be the leftmost $y$ that satisfies~\eqref{cond_yw}, breaking ties as above. We have thus defined $y^i$ as a function defined on $W_i$. Note that this definition of $y^i$ is measurable w.r.t.\ the set $e_i$.

Let us now list  in the following Lemma~\ref{def:W} some properties of $W_i$ and $y^i$ that follow immediately from our construction. We recall that $W_i$ and $y^i$ depend on $\delta$. Moreover, we will regard $W_i$ as a set and forget about the order of its points given by our construction.

\begin{lemma}
\label{def:W}
For all $\delta>0$ and all $i\in[1,N]$, the set $W_i$ and the function $y^i$ (defined on $W_i$) are measurable w.r.t.\ the excursion $e_i$ as a set (i.e., without the time parameterization) and there a.s.\ exists $\delta_0 > 0$ so that for all $\delta \in (0,\delta_0)$ it satisfies the following conditions:
\begin{itemize}
\item[(i)] $W_i$ contains $2R$ points and they are all in $e_i \cap B(z,\delta/2)$.

\item[(ii)] For any $w_1, w_2 \in W_i$, $\dis(w_1, w_2) \geq \delta^\lambda$.

\item[(iii)] For any $w\in W_i$, we have that  $B(y^i(w), \delta^\alpha)\subseteq B(w, \delta^\gamma)\setminus e_i$ and $B(y^i(w), 2\delta^\alpha)\cap e_i\not=\emptyset$. 
\end{itemize}
\end{lemma}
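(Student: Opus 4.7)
The plan is to read each conclusion off the explicit construction of $W_i$ and $y^i$ given just before the lemma, with the bulk of the work in showing that the greedy selection can truly be iterated $2R$ times (condition~(i)). First I would address measurability: both constructions---$W_i$ as $2R$ successive lex-minimal points in $\wt e_i$ with balls of radius $\delta^\lambda$ removed, and $y^i(w)$ as the lex-minimum of a compact set of admissible centers---depend only on $e_i$ as a compact subset of $\overline{\h}$, not on its time parameterization. I would formalize this by equipping non-empty compact subsets of $\overline{\h}$ with the Hausdorff topology and checking that the lex-minimum of such a compact set is Borel; iterating then gives measurability of $W_i$ and of $y^i$.

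Next I would handle conditions~(i) and~(ii). Condition~(ii) is immediate from the construction: at step $j+1$ the candidate set already excludes $\bigcup_{k \le j} B(w_k, \delta^\lambda)$, so automatically $\dis(w_{j+1}, w_k) \geq \delta^\lambda$. For (i) the issue is to show that $\wt{e}_i \setminus \bigcup_{k=1}^j B(w_k, \delta^\lambda) \neq \emptyset$ for every $j < 2R$. My plan is to use that $e_i$ starts and ends on $\partial B(z, \delta)$ and enters $B(z, \delta^\mu)$; for $\delta$ small enough that $\delta^\mu < \delta/2$ (possible since $\mu > 1$), continuity forces $\wt{e}_i = e_i \cap B(z, \delta/2)$ to contain a connected sub-arc $C$ joining $\partial B(z, \delta/2)$ to $\partial B(z, \delta^\mu)$, whose diameter is at least $\delta/2 - \delta^\mu \geq \delta/4$. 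On the other hand each connected component of $\bigcup_{k=1}^j B(w_k, \delta^\lambda)$ is a chain of at most $j \le 2R$ open balls of radius $\delta^\lambda$, hence has diameter at most $4R \delta^\lambda$, which is $< \delta/4$ for $\delta$ small (using $\lambda > 1$). So $C$ cannot be entirely covered and a valid $w_{j+1}$ exists.

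Condition~(iii) I would reduce directly to Lemma~\ref{lem:greens_function}, applied on a slightly enlarged compact rectangle $K' \supseteq K$ that contains a neighborhood of $K$ of size $\delta_0$. For each $w \in W_i \subseteq e_i \cap K'$ the lemma furnishes $y_0$ with $B(y_0, \delta^\alpha) \subseteq B(w, \delta^\gamma) \setminus \eta$, hence in particular disjoint from $e_i$. The function $y \mapsto \dis(y, e_i)$ on $B(w, \delta^\gamma)$ equals $0$ at $w \in e_i$ and is $\geq \delta^\alpha$ at $y_0$; by the intermediate value theorem along a path from $y_0$ to $w$ I can find $y^*$ with $\dis(y^*, e_i) = \delta^\alpha$. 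Then $B(y^*, \delta^\alpha) \cap e_i = \emptyset$ and $\overline{B(y^*, \delta^\alpha)} \cap e_i \neq \emptyset$, so $B(y^*, 2\delta^\alpha) \cap e_i \neq \emptyset$, giving~\eqref{cond_yw}; using $\alpha > \gamma$ ensures $B(y^*, \delta^\alpha) \subseteq B(w, \delta^\gamma)$. The set of admissible $y$ is thus compact and non-empty, and $y^i(w)$ is well defined as its lex-minimum.

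The main obstacle I anticipate is the diameter estimate in condition~(i), namely pinning down a genuine connected sub-arc of $\wt{e}_i$ of diameter of order $\delta$; everything else follows from the construction and from Lemma~\ref{lem:greens_function}. A uniform a.s.\ positive threshold $\delta_0$ works for all $i$, obtained as the minimum of the deterministic thresholds required for $\delta^\mu < \delta/2$ and $4R\delta^\lambda < \delta/4$ together with the random threshold supplied by Lemma~\ref{lem:greens_function}.
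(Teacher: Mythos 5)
Your proposal is correct and follows essentially the same route as the paper, which likewise reads (i)--(iii) off the greedy construction, justifies (i) by comparing the diameter of a connected piece of $\wt e_i$ crossing from $\partial B(z,\delta^\mu)$ to $\partial B(z,\delta/2)$ against the $4R\delta^\lambda$ bound on components of the removed balls, and obtains (iii) by ``moving'' the ball from Lemma~\ref{lem:greens_function} toward $w\in e_i$ until its closure touches $e_i$. Your version is in fact slightly more careful on two points the paper glosses over: extracting a genuine connected sub-arc of $\wt e_i$ rather than asserting $\wt e_i$ itself is connected, and enlarging $K$ so that Lemma~\ref{lem:greens_function} applies to points $w\in B(z,\delta/2)$ that may fall just outside $K$.
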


To prove Lemma~\ref{lem:borel-cantelli}, it now only remains to find at least one $w\in W_i$ for each $i$ such that $A\left(e_i,w,\delta^\xi, y^i(w)\right)$ does not intersect $\eta$.
Our first goal is to prove the following lemma.

\begin{lemma}
\label{lem:S}
For all $i\in[1,N]$, conditionally on $\{t_i<\infty\}$ and $\CF_{t_i}$, with conditional probability $1- O(\delta^4)$ (here and in the sequel, by $O(\delta^4)$, we mean that this term is a.s.\ bounded by $C \delta^4$ for some deterministic constant $C$ which is independent of  $\delta$ or $i$ but possibly dependent on $R$), one can find $S_i\subseteq W_i$ such that $|S_i|\ge R+1$ and  for all $w\in S_i$, $\eta|_{[t_i,\infty)}$ does not intersect $A\!\left(e_i,w,\delta^\xi, y^i(w)\right)$.
\end{lemma}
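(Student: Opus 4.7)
The complement of the event asserted in the lemma is precisely the event that $\eta|_{[t_i, \infty)}$ hits at least $R$ of the $2R$ arcs $A_j := A(e_i, w_j, \delta^\xi, y^i(w_j))$, $j=1,\ldots,2R$. Since $R = L^{L+1} \ge L$, on this event there is some ordered $L$-tuple $(j_1, \ldots, j_L)$ of distinct indices in $\{1, \ldots, 2R\}$ such that $\eta|_{[t_i, \infty)}$ hits $A_{j_1}, \ldots, A_{j_L}$ in that chronological order (take the first $L$ arcs hit). There are only $(2R)!/(2R-L)! \leq (2R)^L$ such tuples, a constant in $\delta$, so a union bound reduces matters to bounding, for each fixed ordered tuple, the probability that $\eta|_{[t_i,\infty)}$ hits $A_{j_1}, \ldots, A_{j_L}$ in that order by $o(\delta^4)$.

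\textbf{Iterated strong Markov.} For a fixed tuple, set $\tau_0 := t_i$ and inductively let $\tau_k := \inf\{t > \tau_{k-1} : \eta(t) \in A_{j_k}\}$, with $\tau_k := \infty$ otherwise. Iterating the strong Markov property of $\SLE_\kappa$ at $\tau_0, \tau_1, \ldots, \tau_{L-1}$ gives
\[
\p[\tau_L < \infty \mid \CF_{t_i}] \le \prod_{k=1}^L \sup \p[\tau_k < \infty \mid \CF_{\tau_{k-1}},\ \tau_{k-1} < \infty],
\]
where each $\sup$ is understood as an essential supremum over the past configuration. By the choice~\eqref{eq:L} of $L$, it suffices to prove that each factor is at most $C\delta^{(8/\kappa - 1)(\xi - \lambda)/4}$, as then the product is $\le C^L \delta^{L(8/\kappa-1)(\xi-\lambda)/4} = o(\delta^4)$.

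\textbf{Single-step hitting estimate.} Conditionally on $\CF_{\tau_{k-1}}$ with $\tau_{k-1} < \infty$, the continuation $\eta|_{[\tau_{k-1}, \infty)}$ is an $\SLE_\kappa$ from $\eta(\tau_{k-1})$ to $\infty$ in the complement of the past hull $K_{\tau_{k-1}}$. Property~(ii) of Lemma~\ref{def:W} guarantees $|w_{j_{k-1}} - w_{j_k}| \ge \delta^\lambda$, and since $\eta(\tau_{k-1}) \in A_{j_{k-1}} \subseteq \partial B(w_{j_{k-1}}, \delta^\xi)$ for $k \ge 2$ (or $\eta(\tau_0) \in \partial B(z, \delta)$ when $k=1$), the starting point lies at Euclidean distance $\gtrsim \delta^\lambda$ from $w_{j_k}$ once $\delta$ is small. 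To hit $A_{j_k}$ the continuation must enter $B(w_{j_k}, \delta^\xi)$. Mapping the slit domain conformally to $\h$, Koebe's quarter theorem together with the Beurling estimate (using that $K$ is separated from $\partial \h$) shows that the conformal radius of $w_{j_k}$ seen from $\eta(\tau_{k-1})$ in the slit domain remains $\gtrsim \delta^\lambda$. The standard $\SLE_\kappa$ hitting probability estimate for small balls at definite conformal distance---e.g.\ via the one-point Green's function \cite{rezaei2016green} or via the estimates of \cite{MR2981906}---then gives the claimed bound $C\delta^{(8/\kappa-1)(\xi-\lambda)/4}$ with constants independent of the past hull.

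\textbf{Main obstacle.} The principal difficulty is establishing the single-step estimate uniformly in the realization of the past hull $K_{\tau_{k-1}}$. This is particularly delicate for $\kappa \in [4, 8)$, where $\eta$ self-intersects, the hull can have intricate topology, and in fact $K_{\tau_{k-1}}$ may enter $B(w_{j_k}, \delta^\xi)$ via $e_i \subseteq K_{\tau_{k-1}}$ itself. What saves the argument is that every intermediate stopping point $\eta(\tau_m)$ for $m < k$ lies on a ball of radius $\delta^\xi$ whose centre $w_{j_m}$ is separated from $w_{j_k}$ by at least $\delta^\lambda$, and that the compactness of $K$ keeps the whole picture bounded away from $\partial \h$. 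Together with standard distortion estimates these facts let one apply the $\SLE$ hitting bound with constants that are uniform in the past, which is what the proof requires.
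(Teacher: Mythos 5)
Your reduction (complement event $=$ at least $R$ arcs hit, pass to the first $L$ arcs hit in chronological order, union bound over $O((2R)^L)$ tuples, iterate the strong Markov property) is sound as far as it goes, and you have correctly located the crux in the single-step estimate. But that estimate is precisely what your sketch does not deliver, and it is not a ``standard distortion'' fact. The probability that the continuation of the curve hits $A_{j_k}$ is governed not by the Euclidean ratio $\delta^\xi/\delta^\lambda$ (nor by a ``conformal radius of $w_{j_k}$ seen from the tip''---$w_{j_k}$ lies \emph{on} $e_i$, hence on the boundary of the slit domain, so this is not even well defined) but by the ratio $r/l$ of the diameter of the image of the arc under the uniformizing map $g_{\tau_{k-1}}$ to its distance from the image of the tip. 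Conformal maps of slit domains do not preserve such ratios, and bounding $r/l$ uniformly over the past hull is the real work: it is the content of Lemma~\ref{lem:estimate}, whose proof requires a bespoke normalization of $g_\tau$ (sending a harmonic-measure-balanced point $a$ near $w_j$ to $i$) and two applications of the Beurling estimate, which is where the exponent $(\xi-\lambda)/4$ rather than $(\xi-\lambda)$ comes from. Asserting the bound ``with constants independent of the past hull'' is assuming the conclusion.

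Worse, the uniform single-step bound is only available in the configuration the paper actually uses, namely when $A_{j_k}$ is a \emph{child} of $A_{j_{k-1}}$ in the nesting tree $\CT$ of the image arcs. In your chronological ordering, $A_{j_k}$ may instead be an outermost arc or a child of a much earlier $A_{j_m}$; nothing then shields $A_{j_k}$ from the curve's wanderings during $[t_i,\tau_{k-1}]$, which (especially for $\kappa\in(4,8)$, where the curve touches its past) may have approached $A_{j_k}$ arbitrarily closely without hitting it. On such pasts the conditional probability of hitting $A_{j_k}$ need not be small, so the essential supremum in your iterated product can be of order one even though the joint probability is small---the product bound is valid but useless. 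This is exactly why the paper does not iterate a chronological single-step bound: it pigeonholes the $R+1=L^{L+1}+1$ surviving arcs of $\CT$ into either a vertex with $L$ children, handled by the multi-point estimate of Rezaei--Zhan (Lemma~\ref{lem:RZ}) applied at the hitting time of the \emph{parent} (bounding the joint probability directly rather than a product of conditional suprema), or a branch of depth $L+1$, where the nesting both forces the hitting order and guarantees that each target is reached only through its parent, so the uniform parent-to-child estimate applies. Two smaller omissions: you never address arcs already swallowed by the hull $J_i$ when $\kappa>4$ (these cannot be hit and should be placed in $S_i$, which is why the paper may assume at least $R$ arcs survive), nor the replacement of $A(e_i,w,\delta^\xi,y^i(w))$ by its accessible portion $A_i(w)$, without which the arcs need not map to disjoint arcs attached to $\R$.
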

\begin{proof}
To prove Lemma~\ref{lem:S}, it suffices to prove that conditionally on $\CF_{t_i}$, for any $R$ points $w_1,\ldots, w_R \in W_i$, the probability that $\eta|_{[t_i,\infty)}$ intersects $A\!\left(e_i,w_j,\delta^\xi, y^i(w_j)\right)$ for all $j\in[1,R]$ is $O(\delta^4)$.  This will imply that conditionally on $\CF_{t_i}$, the probability of not finding any set $S_i$ as required in  Lemma~\ref{lem:S} is at most
\begin{align*}
\binom{2R}{R} O(\delta^4) = O(\delta^4).
\end{align*}
Hence it will imply Lemma~\ref{lem:S}.

Let $J_i$ be the Loewner hull of $\eta[0,t_i]$, i.e.,  the complement of the infinite connected component of $\h\setminus \eta[0,t_i]$.
Let $g_{t_i}$ be a conformal map from $\h\setminus J_i$ onto $\h$ that fixes $\infty$ (such a conformal map is not unique because there is still the freedom of scaling and translation, but we just choose any one of them).
For any $w\in W_i$, if $A\!\left(e_i,w,\delta^\xi, y^i(w)\right)$ is entirely contained in $J_i$ (this is possible whenever $\kappa>4$ for which case $\eta$ is self-touching), then  it  cannot intersect $\eta[t_i,\infty)$.  
Assume that there are at least $R$ points in $W_i$ whose associated $A\!\left(e_i,w,\delta^\xi, y^i(w)\right)$ arcs are not entirely contained in $J_i$ (otherwise we would have already found the set $S_i$ as required in Lemma~\ref{lem:S}). 

For any $w\in W_i$ such that $A\!\left(e_i,w,\delta^\xi, y^i(w)\right)$ is not entirely contained in $J_i$, let $A_i(w)$ denote the arc which is the intersection between $A\!\left(e_i,w,\delta^\xi, y^i(w)\right)$ and the boundary of the connected component of $\h\setminus \left(J_i \cup A\!\left(e_i,w,\delta^\xi, y^i(w)\right) \right)$ containing $\infty$. 
Note that $\eta|_{[t_i,\infty)}$ intersects $A\!\left(e_i,w,\delta^\xi, y^i(w)\right)$ if and only if it intersects $A_i(w)$.
Each $A_i(w)$ is mapped by $g_{t_i}$ to a deformed arc attached to the real line. See Figures~\ref{non-tracing} and~\ref{non-tracing2}.
Since we have assumed that there are at least $R$ such points in $W_i$, they will get mapped by $g_{t_i}$ to $R$ arcs attached to the real line which are in addition disjoint, hence either one next to another or one under another.  The nesting of the image arcs form a natural tree structure: in the image plane, if an arc is directly under another arc (there is no other arc that separates them) then the first arc is considered to be the child of the second arc. We also need to add an artificial root vertex and assign all the outermost arcs as its children. 
We denote this tree by $\CT$. 
By an abuse of language, the vertices of $\CT$ can be either points in $W_i$, their associated arcs or the images of these arcs, which will be clear in the context.
Note that $\CT$ is measurable w.r.t.\ $\CF_{t_i}$ (but not $e_i$).
This tree contains $R+1=L^{L+1}+1$ vertices, hence it must contain either a branch of depth at least $L+1$ or a vertex with at least $L$ children.
We will treat the two cases separately in the following. We will prove respectively in Lemma~\ref{lem:case1_goal} and Lemma~\ref{lem:case2_goal} that for each of the two cases, conditionally on $\CF_{t_i}$, for any $R$ points $w_1,\ldots, w_R \in W_i$, the probability that $\eta|_{[t_i,\infty)}$ intersects $A\!\left(e_i,w_j,\delta^\xi, y^i(w_j)\right)$ for all $j\in[1,R]$ is $O(\delta^4)$. 
This will complete the proof of Lemma~\ref{lem:S}.
\end{proof}

\subsubsection*{Case 1: $\CT$ contains a vertex with at least $L$ children}
Let us fix some notation which is locally used in the present case.
Let $w_0$ be the vertex which has at least $L$ children and we arbitrarily pick $L$ of its children $w_1,\ldots, w_L$. 
For $j\in[1,L]$, we denote the arcs $A_i(w_j)$ by $A_{i,j}$.
If $w_0$ is not the root of $\CT$ (i.e., an artificial vertex), then we denote  $A_i(w_0)$ by $A_{i,0}$ and let $\tau$ be the first time after $t_i$ that $\eta$ hits $A_{i,0}$.
We aim to show the following lemma.
\begin{lemma}\label{lem:case1_goal}
Conditionally on $\{t_i<\infty\}$ and  $\CF_{t_i}$, the probability that $\eta|_{[t_i, \infty)}$ visits all the arcs $A_{i,j}$ for $j\in[1,L]$ is $O\!\left(\delta^{L (8/\kappa-1) (\xi-\lambda)/4}\right)$.  In particular, if $L$ satisfies~\eqref{eq:L}, then this probability is $O(\delta^4)$.
 \end{lemma}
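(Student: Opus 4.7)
The strategy is to apply the domain Markov property of $\SLE_\kappa$ iteratively at the successive times $\eta$ hits each arc, combined with a one-arc hitting estimate contributing a factor $O(\delta^{(8/\kappa-1)(\xi-\lambda)/4})$ per arc. Any trajectory of $\eta|_{[t_i,\infty)}$ visiting all $L$ arcs $A_{i,1},\dots,A_{i,L}$ does so in one of at most $L!$ orders, so a union bound reduces the task to bounding the probability for a fixed order $(j_1,\dots,j_L)$. I define successive hitting times $\sigma_0:=t_i$, and for $k\geq 1$, $\sigma_k:=\inf\{s>\sigma_{k-1}:\eta(s)\in A_{i,j_k}\}$; when $w_0$ is not the root of $\CT$, I first apply the Markov property at $\tau$ (to restrict attention to the bounded region enclosed by $A_{i,0}$) and restart with $\sigma_0:=\tau$. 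By iterated conditioning,
\[
\p[\sigma_L<\infty\giv\CF_{t_i}] \leq \prod_{k=1}^{L} \sup\, \p\!\left[\sigma_k<\infty \giv \CF_{\sigma_{k-1}}\right],
\]
where the supremum is over realizations with $\sigma_{k-1}<\infty$.

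For each factor, the domain Markov property implies that conditionally on $\CF_{\sigma_{k-1}}$, the image $g_{\sigma_{k-1}}(\eta([\sigma_{k-1},\infty)))-U_{\sigma_{k-1}}$ is a chordal $\SLE_\kappa$ from $0$ to $\infty$ in $\h$, and $\{\sigma_k<\infty\}$ is the event that this SLE hits the conformal image of $A_{i,j_k}$. The one-arc bound I aim to establish is
\[
\p\!\left[\sigma_k<\infty \giv \CF_{\sigma_{k-1}}\right] = O\!\left(\delta^{(8/\kappa-1)(\xi-\lambda)/4}\right).
\]
By Lemma~\ref{def:W}, $A_{i,j_k}\subseteq B(w_{j_k},\delta^\xi)$ while the tip $\eta(\sigma_{k-1})$ lies on a previously hit arc contained in a different ball whose center is at distance at least $\delta^\lambda$ from $w_{j_k}$. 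The Beurling estimate thus bounds the harmonic measure from $\eta(\sigma_{k-1})$ of $A_{i,j_k}$ in $\h\setminus\eta[0,\sigma_{k-1}]$ by $O(\delta^{(\xi-\lambda)/2})$, and by conformal invariance this bound transfers to the harmonic measure of the image arc viewed from $U_{\sigma_{k-1}}$. A Beurling-type hitting bound for SLE (obtainable from the multi-arm crossing estimate \cite[Theorem~5.7]{MR2981906} together with SLE scaling) then upgrades this to an SLE hitting probability by raising to the exponent $(8/\kappa-1)/2$, producing the claimed estimate.

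Multiplying over $k=1,\dots,L$ and combining with the at most $L!$-fold union bound over orderings gives the overall bound $O(\delta^{L(8/\kappa-1)(\xi-\lambda)/4})$; by the choice of $L$ in~\eqref{eq:L} this is $O(\delta^4)$. The main obstacle is establishing the one-arc hitting estimate with the correct exponent $(8/\kappa-1)(\xi-\lambda)/4$: the Beurling estimate must be applied in the slit domain $\h\setminus\eta[0,\sigma_{k-1}]$, where the competing arcs and the conformal distortion of $g_{\sigma_{k-1}}$ both influence the harmonic measure; so the Beurling input and the SLE hitting exponent must be combined carefully in the image plane, using Koebe-type distortion estimates to justify the transfer of geometric data.
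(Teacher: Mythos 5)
Your high-level strategy (a per-arc factor of $O(\delta^{(8/\kappa-1)(\xi-\lambda)/4})$, multiplied over the $L$ arcs) matches the target, and your exponent bookkeeping is consistent with the correct answer. However, the core quantitative step is asserted rather than proved, and the tool you cite cannot deliver it. The estimate \cite[Theorem~5.7]{MR2981906} is an annulus-crossing bound of the form $O(\eps^{c_0 k})$ with an unspecified exponent $c_0$; it cannot produce the sharp boundary one-point exponent $8/\kappa-1$ that your ``upgrade'' requires. What is actually needed is a boundary multi-point (or at least one-point) hitting estimate of Green's-function type, such as the Rezaei--Zhan bound $\p[\dist(\gamma,a_j)\le r_j,\ \forall j]\le C_L\prod_j (r_j/l_j)^{8/\kappa-1}$ for boundary points, which is the input the paper uses (Lemma~\ref{lem:RZ}). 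Moreover, your claim that one passes from the Beurling harmonic-measure bound $O(\delta^{(\xi-\lambda)/2})$ to the hitting probability by ``raising to the exponent $(8/\kappa-1)/2$'' hides the essential geometric fact: the exponent $(8/\kappa-1)/2$ rather than $8/\kappa-1$ is only correct because, under a suitable normalization of the uniformizing map, the image of the target arc has diameter bounded below by an absolute constant while the competing arcs (and the tip) sit at distance $\gtrsim\delta^{-(\xi-\lambda)/4}$ --- harmonic measure of a unit-size boundary arc at distance $l$ decays like $l^{-2}$, which is where the square root comes from. Establishing this is exactly the content of the paper's Lemma~\ref{lem:estimate}, which fixes, for each $j$, the map $g_\tau$ by sending to $i$ the point $a$ from which the three boundary parts determined by $A_{i,j}$ have equal harmonic measure $1/3$, and then runs Beurling in the preimage to control the image distances. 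You flag this as ``the main obstacle'' but do not resolve it, so the proposal has a genuine gap at its central step.

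There is also a structural difference worth noting. The paper conditions only once, at the hitting time $\tau$ of the parent arc $A_{i,0}$, and then applies the multi-point estimate to all $L$ sibling arcs simultaneously in the single image plane under $g_\tau$; the sequential structure is absorbed into the distances $l_j=\dist(a_j,\{a_m:m<j\})$, and the distortion analysis is done once. Your scheme conditions $L$ times, at the successive hitting times $\sigma_k$, and would therefore require redoing the distortion analysis in each domain $\h\setminus\eta[0,\sigma_{k-1}]$, uniformly over all realizations with $\sigma_{k-1}<\infty$ --- a strictly harder task, since the previously hit arcs and the newly grown curve segments change the geometry at each stage (and ``harmonic measure viewed from the boundary point $U_{\sigma_{k-1}}$'' is not well defined without a normalization, which is precisely the delicate point). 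The $L!$ union bound over orderings is harmless, but the iterated one-arc estimate it rests on is not justified as written.
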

 
We will prove Lemma~\ref{lem:case1_goal} for the case where $w_0$ is not the root of $\CT$. When $w_0$ is the root of $\mathcal{T}$, it is an artificial (virtual) vertex and its children are all the outermost arcs. The proof for this case will follow from almost the same arguments with $t_i$ in the place of $\tau$, hence we decide to leave it to the reader.
 
 \begin{figure}[h!]
\centering
\includegraphics[scale=0.85]{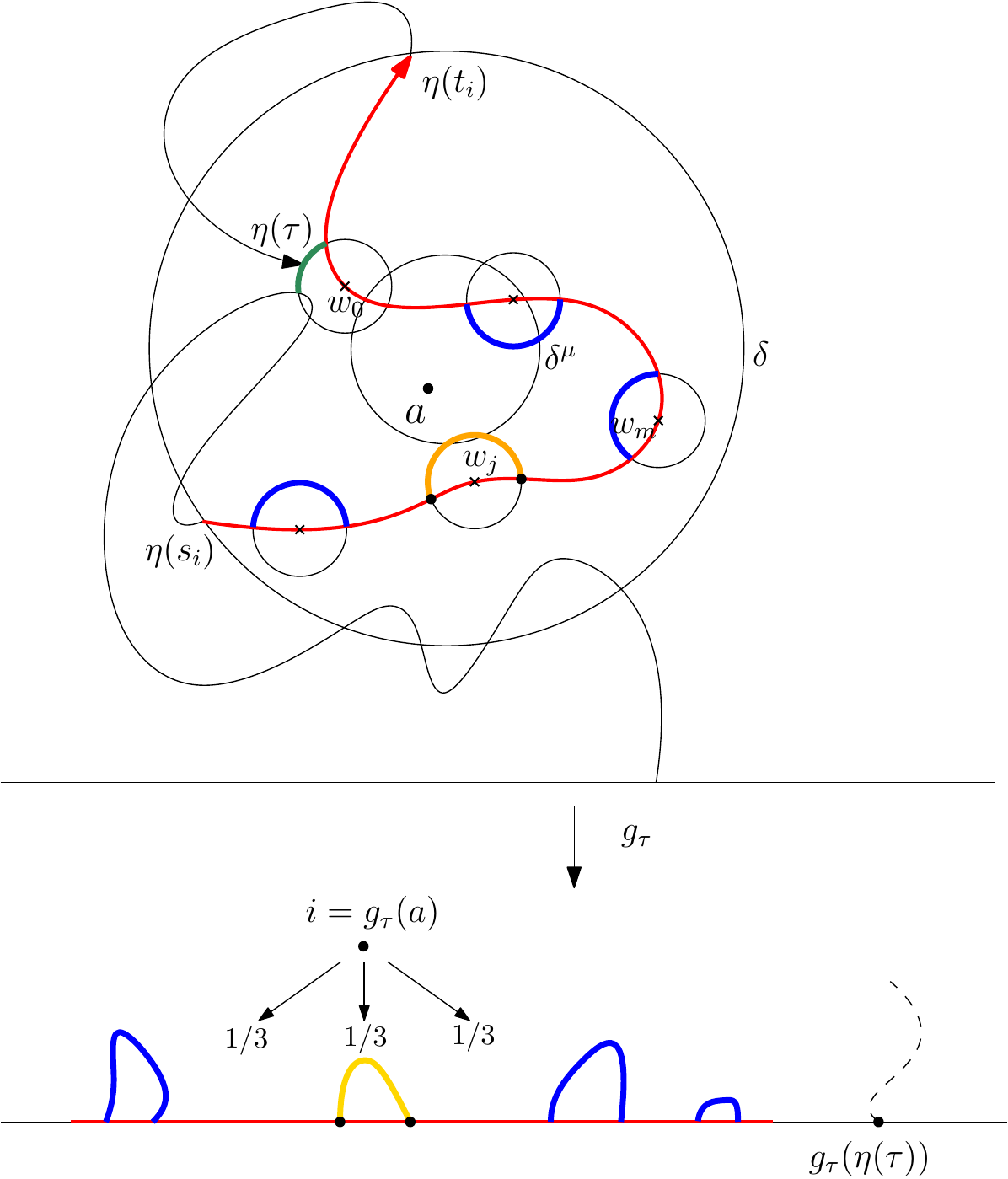}
\caption{Illustration of Case 1. On the top, we depict the excursion $e_i$ (in red) and the balls $B(w,\delta^\xi)$ where $w\in W_i$. The green arc represents $A_{i,0}$. The blue and yellow arcs are the children of $A_{i,0}$. The yellow arc centered at  $w_{j}$ is chosen for the renormalization of $g_{\tau}$.
}
\label{non-tracing}
\end{figure}

 An important tool is the following lemma of Rezaei and Zhan \cite{MR3606738}. We define the function $P_y: [0,\infty) \to \R$ by
 \begin{align*}
P_y(x)= y^{8/\kappa-2+\kappa/8} x^{1-\kappa/8} \quad\text{if}\quad x\le y \quad\text{and}\quad P_y(x)=x^{8/\kappa-1}\quad\text{if}\quad x\ge y.
\end{align*}

\begin{lemma}[Theorem 1.1, \cite{MR3606738}] \label{lem:RZ}
Let $a_0, \ldots, a_L$ be distinct points in $\overline\h$ such that $a_0=0$. Let $y_j=\im(a_j) \ge 0$ and $l_j=\dist(a_j, \{a_m: 0\le m< j \}), 1\le j\le L.$ Let $r_1, \ldots, r_L>0$. Let $\gamma$ be an $\SLE_\kappa$ curve in $\h$ from $0$ to $\infty$. Then there is a constant $C_L<\infty$ depending only on $\kappa$ and $L$ such that
\begin{align*}
\p[\dist (\gamma, a_j)\le r_j, 1\le j \le L] \le C_L \prod_{j=1}^L\frac{P_{y_j}(r_j\wedge l_j)}{P_{y_j}(l_j)}.
\end{align*}
\end{lemma}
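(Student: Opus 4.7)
The proof strategy is induction on $L$, with the base case $L=1$ being the one-point Green's function upper bound for $\SLE_\kappa$. For a single point $a$ with $\im(a) = y$ and $|a| = l$, one shows $\p[\dist(\gamma, a) \le r] \le C\, P_y(r \wedge l)/P_y(l)$ by constructing a local martingale of the form $M_t = \im(g_t(a) - U_t)^{1-\kappa/8} / |g_t(a) - U_t|^{8/\kappa - 2 + \kappa/8}$ (where $g_t$ is the Loewner map and $U_t$ the driving Brownian motion), applying optional stopping at the first time $\gamma$ enters $B(a,r)$ before $a$ is swallowed, and translating the resulting conformal-distance bound into a Euclidean one using Koebe's distortion theorem applied to $g_t$ near the hull.

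For the inductive step, assume the bound for $L-1$ points. We peel off one point: choose $j^* \in \{1, \ldots, L\}$ to be the index minimizing $l_j$, and let
\[
\tau = \inf\{t \ge 0 : \dist(\gamma(t), a_{j^*}) \le r_{j^*}\}.
\]
Applied in a slightly generalized form --- after a preliminary conformal re-centering so that $l_{j^*}$, the distance from $a_{j^*}$ to the nearest earlier point, plays the role of distance to the tip --- the base case yields
\[
\p[\tau < \infty] \le C\, \frac{P_{y_{j^*}}(r_{j^*} \wedge l_{j^*})}{P_{y_{j^*}}(l_{j^*})}.
\]
Conditioning on $\CF_\tau$ and using the domain Markov property of SLE, the image of $\gamma|_{[\tau, \infty)}$ under a normalized $g_\tau$ is a fresh $\SLE_\kappa$ in $\h$. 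Apply the inductive hypothesis to the $L-1$ image points $\{g_\tau(a_j)\}_{j \ne j^*}$ with radii $|g_\tau'(a_j)|\, r_j$. The key technical verification is via Koebe's distortion theorem: since each $a_j$ with $j \ne j^*$ sits at Euclidean distance at least $l_j$ from the hull $\gamma[0,\tau]$, the data $(y_j, l_j, r_j)$ transform under $g_\tau$ only by bounded multiplicative factors depending on $\kappa$ and $L$. Consequently each factor $P_{y_j}(r_j \wedge l_j)/P_{y_j}(l_j)$ is preserved up to a constant, and multiplying through the stopping time probability by the inductive bound, then absorbing constants into $C_L$, gives the claimed estimate.

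The main obstacles are twofold. First, the one-point denominator $P_{y_{j^*}}(l_{j^*})$ involves distance to the nearest of the earlier $a_k$'s rather than to the origin alone, so the base case must be applied in a slightly generalized form, requiring either an explicit conformal repositioning (which may alter $y_{j^*}$) or an auxiliary multi-anchor version of the one-point estimate. Second, for $\kappa \in [4,8)$ some of the remaining $a_j$'s may be swallowed by the hull at time $\tau$; this case makes $E_j$ fail trivially, but points that are \emph{barely} outside the hull are delicate --- the Koebe comparison between $y_j, l_j$ and their images under $g_\tau$ degrades precisely when $g_\tau(a_j)$ approaches $\R$, and the non-smooth transition of $P_y(x)$ at $x=y$ is exactly what absorbs this degradation. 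Handling uniformly the bulk regime ($r_j \ge y_j$) and the boundary regime ($r_j < y_j$) is what forces the particular interpolating form of $P_y$, with exponents $8/\kappa - 1$ (interior) and $1 - \kappa/8$ (boundary).
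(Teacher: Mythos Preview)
This lemma is not proved in the paper; it is simply quoted from Rezaei and Zhan \cite{MR3606738} (Theorem~1.1 there), so there is no in-paper proof to compare against. Your sketch is a reasonable high-level outline of the Rezaei--Zhan argument (one-point martingale estimate plus induction via the domain Markov property and Koebe distortion), and you have correctly identified the two genuine technical issues that make the full proof nontrivial: reducing the denominator to the distance $l_{j^*}$ to the nearest previously ordered point rather than to the origin, and controlling the distortion of the data $(y_j,l_j,r_j)$ under $g_\tau$ uniformly, including near-swallowed points when $\kappa\in[4,8)$. For the purposes of this paper no proof is required --- the lemma is used as a black box --- so your proposal is more than what is needed here; if you wish to include a self-contained argument you would need to fill in precisely the two gaps you flag, as done in \cite{MR3606738}.
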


 Let $J_{\tau}$ be the Loewner hull of $\eta[0,\tau]$ and let $g_{\tau}$ be a conformal map from $\h\setminus J_{\tau}$ onto~$\h$ which fixes~$\infty$ (as we will see later, we will aim to estimate a particular ratio which does not depend on the choice of $g_\tau$).  We reorder the arcs $A_{i,1}, \ldots, A_{i, L}$ in a measurable way w.r.t.\ $\CF_{\tau}$ so that the diameters of $g_{\tau}(A_{i,j})$ are decreasing in $j$ for $j\in[1, L]$.  Let~$r_j$ denote the diameter of $g_{\tau}(A_{i,j})$.  Let $a_0=0$ and for $j\in[1,L]$, let~$a_j$ be the leftmost point of $\overline{g_{\tau}(A_{i,j})} \cap \R$.  Then for all $j\in[0,L]$, we have that $y_j=\im(a_j)=0$ and that~$P_{y_j}$ is equal to the function $x\mapsto x^{8/\kappa-1}$.  Let $l_j:=\dist(a_j, \{a_m: 0\le m< j \})$ for $1\le j\le L$.  We first prove the following estimate.
\begin{lemma}\label{lem:estimate}
For any $j\in[1,L]$,  we have
\begin{align}\label{eqn:diam_dist_ratio}
r_j / l_j =O \left(\delta^{(\xi-\lambda)/4}\right).
\end{align}
\end{lemma}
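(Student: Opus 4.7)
My approach would be to exploit the scale separation between $\delta^\xi$ (the radii of the balls containing the arcs $A_{i,j}$) and $\delta^\lambda$ (the mutual distances between the centers $w_j$), via conformal invariance of harmonic measure together with the Beurling estimate. This is the same circle of ideas already used repeatedly in Section~\ref{sec:proof1.2}.

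The first step is to bound the harmonic measure of $A_{i,j}$ in the domain $\h \setminus J_\tau$. Since $w_j \in e_i$ and the excursion $e_i$ extends between $\partial B(z, \delta^\mu)$ and $\partial B(z, \delta)$, the set $e_i \subseteq \partial J_\tau$ contains (for $\delta$ small) a piece that connects $\partial B(w_j, \delta^\xi)$ to $\partial B(w_j, \delta^\lambda)$. Any Brownian motion starting outside $B(w_j, \delta^\lambda)$ must then cross the annulus $B(w_j, \delta^\lambda) \setminus B(w_j, \delta^\xi)$ while avoiding $\partial J_\tau$ in order to reach $A_{i,j}$, so by Beurling the harmonic measure of $A_{i,j}$ from any such point in $\h \setminus J_\tau$ is $O(\delta^{(\xi-\lambda)/2})$. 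By conformal invariance this equals the harmonic measure of $g_\tau(A_{i,j})$ from the image point in $\h$.

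The second step is to translate this bound into the desired ratio. I would choose a point $p \in \h \setminus J_\tau$ whose image $g_\tau(p)$ lies at distance comparable to $l_j$ from $a_j$: let $a_m$ be the nearest of the footpoints $\{a_0, \ldots, a_{j-1}\}$ to $a_j$ (so $|a_m - a_j| = l_j$) and set $p = g_\tau^{-1}(a_m + is)$ for some small $s > 0$. The preimage of $a_m$ is an endpoint of $A_{i,m}$, lying within $\delta^\xi$ of $w_m$, so the separation $|w_m - w_j| \geq \delta^\lambda$ ensures that $p$ lies outside $B(w_j, \delta^\lambda / 2)$, which suffices for the Beurling estimate (up to absolute constants). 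Standard Poisson-kernel estimates in the half-plane then relate the harmonic measure of $g_\tau(A_{i,j})$ from $g_\tau(p)$ to the ratio of its diameter $r_j$ to the distance $|g_\tau(p) - a_j| \leq 2 l_j$, and combining this with the bound from the previous step yields the claim.

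The main technical obstacle I anticipate is making the Poisson-kernel asymptotic rigorous when the image arc $g_\tau(A_{i,j})$ is elongated or when $r_j$ is not much smaller than $l_j$ to begin with; in the borderline regime one has to interpolate between the $O(1)$ bound on harmonic measure and its small-scale asymptotic. The exponent $1/4$ in the statement (rather than the $1/2$ that arises naively from a single application of Beurling) appears to leave room for some loss in this harmonic-measure-to-diameter conversion, or alternatively for a second application of Beurling to the inverse map $g_\tau^{-1}$ when controlling the boundary behavior of the image arcs.
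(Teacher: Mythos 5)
Your Step 1 (Beurling across the annulus $B(w_j,\delta^\lambda)\setminus B(w_j,\delta^\xi)$, using that $e_i\subseteq\partial J_\tau$ crosses it) is sound and is indeed the source of the exponent $(\xi-\lambda)/2$ in the paper. The gap is in Step 2, in the conversion of that harmonic-measure bound into a bound on $r_j/l_j$. With $p=g_\tau^{-1}(a_m+is)$ and $s$ small, the point $a_m+is$ is close to the boundary of $\h$, so the harmonic measure of \emph{any} fixed set not containing $a_m$ seen from $a_m+is$ degenerates linearly in $s$ (Poisson kernel $\approx s\,r_j/l_j^2$ for an interval, not $r_j/l_j$); the upper bound $O(\delta^{(\xi-\lambda)/2})$ is then vacuous and yields no information on $r_j/l_j$. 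To get a quantity comparable to $r_j/l_j$ you would need $s\asymp l_j$, but then you lose all control of where $g_\tau^{-1}(a_m+il_j)$ sits (it may well lie inside $B(w_j,\delta^\lambda/2)$, where your Beurling bound does not apply), and $l_j$ is defined only in the image so it cannot be used to choose the preimage point. A second symptom of the same problem: your argument never uses the ordering of the arcs by decreasing image diameter, but without a \emph{lower} bound on $\diam(g_\tau(A_{i,m}))$ the smallness of the harmonic measure of $A_{i,m}$ is perfectly compatible with $a_m$ lying arbitrarily close to $a_j$, so $l_j$ cannot be bounded below.

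The paper resolves exactly this issue by choosing the observation point first, adapted to $A_{i,j}$: it takes the unique $a$ in the unbounded component of $\h\setminus(J_\tau\cup A_{i,j})$ from which the three boundary pieces determined by the endpoints of $A_{i,j}$ and $\infty$ each have harmonic measure $1/3$, and normalizes $g_\tau$ to send $a\mapsto i$. This forces $c_0\le r_j\le c_1$, and a Beurling argument shows $a\in B(w_j,C\delta^\xi)$, so your Step 1 applies from $a$: the arcs $A_{i,m}$, $m<j$, and the far side of $\partial(\h\setminus J_\tau)$ have harmonic measure $O(\delta^{(\xi-\lambda)/2})$ from $i$. Because the arcs were ordered so that $\diam(g_\tau(A_{i,m}))\ge\diam(g_\tau(A_{i,j}))\ge c_0$, a set of unit-order diameter with harmonic measure $O(\delta^{(\xi-\lambda)/2})$ from $i$ must lie at distance at least a constant times $\delta^{-(\xi-\lambda)/4}$; hence $l_j\gtrsim\delta^{-(\xi-\lambda)/4}$ while $r_j=O(1)$. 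In particular the exponent $1/4$ is not slack left for a lossy Poisson-kernel conversion or a second Beurling step: it arises from inverting the quadratic decay of harmonic measure (seen from $i$) in the distance to a boundary-attached set of fixed diameter. As written, your argument does not close, and the missing ingredient is precisely an observation point from which $A_{i,j}$ has harmonic measure bounded below by a constant.
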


\begin{proof}
We emphasize that the ratio in the left side of~\eqref{eqn:diam_dist_ratio} does not depend on the renormalization of $g_{\tau}$ (as long as $g_{\tau}$ fixes $\infty$), hence we can choose any $g_{\tau}$ that is the most convenient for us.
In fact, we will choose a different $g_\tau$ for each $j$. Now fix $j\in[1,L]$.
Let $O_{i,j}$ denote the infinite connected component of  $\h \setminus \left( J_\tau\cup A_{i,j}\right)$. The two endpoints of $A_{i,j}$ and $\infty$ divide $\partial O_{i,j}$ into three parts. There is a unique point $a\in O_{i,j}$ such that the harmonic measure seen from $a$ in $O_{i,j}$ of the three boundary parts are all equal to $1/3$.
See Figure~\ref{non-tracing} for an illustration. 
We can now fix $g_{\tau}$ to be the conformal map that sends $a$ to $i$ and $\infty$ to $\infty$.
Then in the image upper half-plane, seen from $i$, the harmonic measure of $g_{\tau} \left(A_{i,j}\right)$ is $1/3$ and the harmonic measures of the parts of the real line to the left and right of $g_{\tau} \left(A_{i,j}\right)$ are both $1/3$.

Note that under our normalization there exist absolute constants $0 < c_0 < c_1 < \infty$ such that $c_0 \leq \diam(g_\tau(A_{i,j})) \leq c_1$.  We also note that the distance between $a$ and $w_j$ is at most $C\delta^\xi$ for some absolute constant $C>0$. This is because we can apply the Beurling estimate to the circles of radii $C\delta^\xi$ and $\delta^\xi$ around $w_j$ and get that if the distance from $a$ to $w_j$ is at least $C \delta^\xi$ for a large enough constant $C$, then a Brownian motion started from $a$ would have probability less than $1/3$ to stop in $A_{i,j}$.

Without loss of generality, we can assume that $A_{i,0}$ is attached to the left side of $J_\tau$. Then the harmonic measure seen from $a$ of the part of $\partial (\h \setminus J_\tau)$ which is to the right of $\eta(\tau)$ is  $O(\delta^{(\xi-\lambda)/2})$ as $a\in B(w_j, C\delta^\xi)$ and if we start a Brownian motion at $a$ and stop it upon hitting $\partial (\h \setminus J_\tau)$, then in order for it to stop on $\partial (\h \setminus J_\tau)$ to the right of $\eta(\tau)$, it has to travel distance at least $\delta^\lambda$ before exiting $\h \setminus J_\tau$.  By the Beurling estimate, this probability is $O(\delta^{(\xi-\lambda)/2})$.

Fix $m\in[1,j-1]$. The harmonic measure seen from $a$ of $A_{i,m}$ in $\h\setminus (J_{\tau} \cup A_{i,m})$ is $O(\delta^{(\xi-\lambda)/2})$ for similar reasons. Indeed, we know that $a\in B(w_j, C\delta^\xi)$ and if we start a Brownian motion at $a$ and stop it upon hitting $\partial (\h \setminus J_\tau) \cup A_{i,m}$, then in order for it to stop on $A_{i,m}$, it has to travel distance at least $\delta^\lambda$ before exiting the domain.  By the Beurling estimate, this probability is $O(\delta^{(\xi-\lambda)/2})$.
Since the diameters of $g_\tau(A_{i,j})$ are decreasing in $j$, we have that $\diam(g_\tau(A_{i,m})) \ge \diam (g_\tau(A_{i,j}))\ge c_0$. We thus see that the distance between $g_\tau(A_{i,j})$ and $g_\tau(A_{i,m})$ is at least a constant times $\delta^{-(\xi-\lambda)/4}$.
Since this is true for all $m\in[1,j-1]$, we have proved that under this normalization, $l_j$ is at least a constant times $\delta^{-(\xi-\lambda)/4}.$ Since $r_j=O(1)$, this implies the lemma.
\end{proof}

Combining Lemmas~\ref{lem:RZ} and~\ref{lem:estimate}, we get that conditionally on $\{\tau<\infty\}$ (e.g., the event that $\eta|_{[t_i,\infty)}$ visits $A_{i,0}$) and on $\CF_{\tau}$, the probability that $\eta|_{[\tau, \infty)}$ visits all the arcs $A_{i,j}$ for $j\in[1,L]$ is at most
\begin{align*}
C_L \prod_{j=1}^L (r_j/l_j)^{8/\kappa-1}= O\!\left(\delta^{L (8/\kappa-1) (\xi-\lambda)/4}\right).
\end{align*}
Since we further have that conditionally on $\{t_i<\infty\}$ and on $\CF_{t_i}$, the probability of $\{\tau<\infty\}$ is at most $1$, we have proved Lemma~\ref{lem:case1_goal}.

\subsubsection*{Case 2: $\CT$ contains a branch of depth at least $L+1$}
Let us fix some notation that is locally used in the present case.  Let this branch be $w_0, w_1,\ldots, w_{L}$ such that for all $j\in[1,L]$, $w_j$ is the child of $w_{j-1}$.  
In order not to deal with the possibility of $w_0$ being the artificial root vertex, we will only look at the branch $w_1, \ldots, w_L$. 
For $j\in[1,L]$, we denote the arcs $A_i(w_j)$ by $A_{i,j}$.
We aim to show the following lemma. See Figure~\ref{non-tracing2}.
\begin{lemma}\label{lem:case2_goal}
Conditionally on $\CF_{t_i}$, the probability that $\eta|_{[t_i, \infty)}$ visits the arc $A_{i,L}$  is $O(\delta^4)$.
 \end{lemma}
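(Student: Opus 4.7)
I would prove Lemma~\ref{lem:case2_goal} by iterating a one-point $\SLE_\kappa$ hitting estimate along the nested chain of arcs $A_{i,1}, \ldots, A_{i,L}$, using the spatial Markov property at each step.

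First, I would exploit the nesting of the arcs to reduce the statement to a product of conditional hitting probabilities. Since each $A_{i,j+1}$ is directly under $A_{i,j}$ in $\CT$, the image arcs $g_{t_i}(A_{i,j})$ form nested bumps on $\R$, so that $g_{t_i}(A_{i,j})$ separates $g_{t_i}(A_{i,j'})$ from $\infty$ in $\h$ whenever $j < j'$. Pulling back by $g_{t_i}^{-1}$, the arc $A_{i,j}$ together with a portion of $\partial J_{t_i}$ separates $A_{i,j'}$ from $\infty$ in $\h \setminus J_{t_i}$, so any continuous curve reaching $A_{i,L}$ must hit $A_{i,1}, \ldots, A_{i,L-1}$ first. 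Set $\tau_0 := t_i$ and $\tau_j := \inf\{t \geq \tau_{j-1} : \eta(t) \in A_{i,j}\}$ for $j = 1, \ldots, L$; it suffices to show $\p[\tau_L < \infty \mid \CF_{t_i}] = O(\delta^4)$.

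At each step $j$, on $\{\tau_{j-1} < \infty\}$ I condition on $\CF_{\tau_{j-1}}$ and choose a conformal map $g_{\tau_{j-1}} \colon \h \setminus J_{\tau_{j-1}} \to \h$ sending $\eta(\tau_{j-1}) \mapsto 0$ and $\infty \mapsto \infty$. By the spatial Markov property, $g_{\tau_{j-1}}(\eta|_{[\tau_{j-1},\infty)})$ is a standard $\SLE_\kappa$ in $\h$ from $0$ to $\infty$. Since the endpoints of $A_{i,j}$ lie on $e_i \subseteq \eta[0,t_i] \subseteq J_{\tau_{j-1}}$, the image $g_{\tau_{j-1}}(A_{i,j})$ is an arc with both endpoints on $\R$, and Lemma~\ref{lem:RZ} with $L = 1$ and $y_1 = 0$ gives
\[
\p[\tau_j < \infty \mid \tau_{j-1} < \infty, \CF_{\tau_{j-1}}] \leq C \left(r_j/l_j\right)^{8/\kappa - 1},
\]
where $r_j := \diam(g_{\tau_{j-1}}(A_{i,j}))$ and $l_j := |a_j|$ with $a_j$ the leftmost point of $\overline{g_{\tau_{j-1}}(A_{i,j})} \cap \R$.

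Finally, I would bound $r_j/l_j = O(\delta^{(\xi - \lambda)/4})$ by essentially replaying the proof of Lemma~\ref{lem:estimate}: since this ratio is invariant under positive scalings of $\h$ (which fix $0$ and $\infty$), I may re-normalize $g_{\tau_{j-1}}$ to send an equiharmonic point $a$ for $A_{i,j}$ in $\h \setminus J_{\tau_{j-1}}$ to $i$ and $\infty$ to $\infty$, so that $\diam(g_{\tau_{j-1}}(A_{i,j}))$ is of constant order. The Beurling estimate places $a$ within $O(\delta^\xi)$ of $w_j$, and Lemma~\ref{def:W}(ii) (together with $\eta(t_i) \in \partial B(z,\delta)$ versus $w_1 \in B(z,\delta/2)$ in the edge case $j=1$) forces $|a - \eta(\tau_{j-1})| \geq \delta^\lambda/3$ for small $\delta$. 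A second Beurling application shows the harmonic measure from $a$ of a $\delta^\xi$-neighborhood of $\eta(\tau_{j-1})$ is $O(\delta^{(\xi - \lambda)/2})$; combining conformal invariance with the fact that the harmonic measure from $i$ of an interval $[x - s, x + s] \subset \R$ with $|x| \gg s$ is of order $s/|x|^2$ then forces $|g_{\tau_{j-1}}(\eta(\tau_{j-1}))| \gtrsim \delta^{-(\xi - \lambda)/4}$ in this normalization, yielding the desired ratio bound. Multiplying the $L$ one-step estimates and invoking~\eqref{eq:L} gives $\p[\tau_L < \infty \mid \CF_{t_i}] \leq C^L \delta^{L(8/\kappa - 1)(\xi - \lambda)/4} = O(\delta^4)$. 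The main technical obstacle is this last step: verifying that the Beurling-type harmonic measure bound transports correctly through the possibly rough conformal map $g_{\tau_{j-1}}$, so that $\eta(\tau_{j-1}) \in \partial J_{\tau_{j-1}}$ maps far enough from $g_{\tau_{j-1}}(A_{i,j})$ to yield the claimed scale-invariant ratio.
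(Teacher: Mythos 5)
Your proposal is correct and follows essentially the same route as the paper: the paper proves this lemma by applying Lemma~\ref{lem:case1_goal} with $L=1$ at each step of the branch (using that the nesting forces $\eta|_{[t_i,\infty)}$ to hit $A_{i,1},\dots,A_{i,L-1}$ successively before reaching $A_{i,L}$) and multiplying the $L$ one-step bounds $O(\delta^{(8/\kappa-1)(\xi-\lambda)/4})$, exactly as you do. Your explicit replay of the Rezaei--Zhan estimate and of the harmonic-measure/Beurling normalization argument is just an unpacking of Lemmas~\ref{lem:RZ} and~\ref{lem:estimate}, which the paper instead invokes by reference.
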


\begin{figure}[h!]
\centering
\includegraphics[scale=0.85]{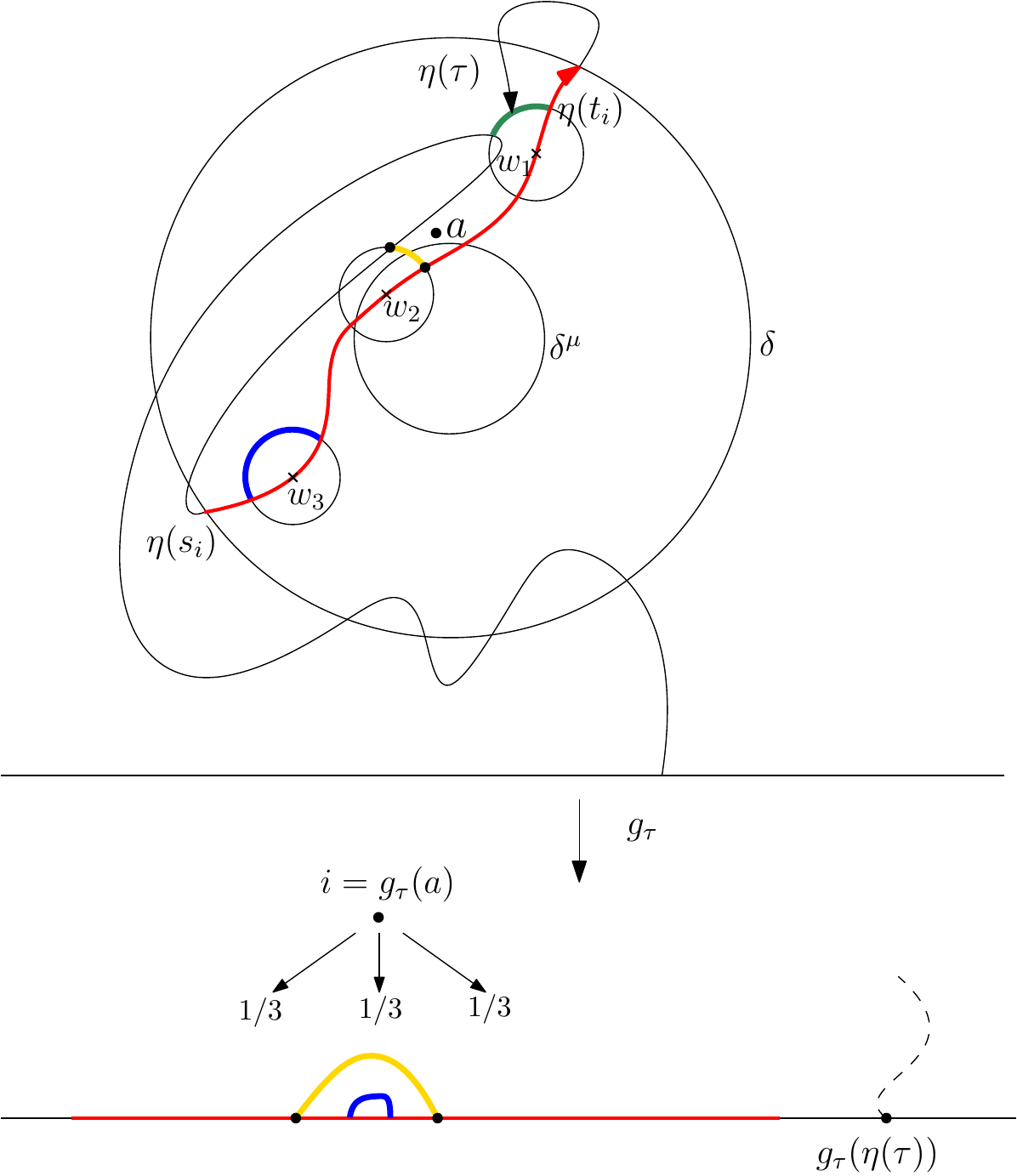}
\caption{Illustration of Case 2. On the top, we depict the excursion $e_i$ (in red) and the balls $B(w,\delta^\xi)$ where $w\in W_i$. We depict a branch of depth $3$: $w_1, w_2, w_3$.  The yellow arc centered at $w_j$ is chosen for the normalization of $g_{\tau}$.}
\label{non-tracing2}
\end{figure}

\begin{proof}
Note that Lemma~\ref{lem:case1_goal} is in fact valid for any $L\in\N$ (where $L$ need not satisfy~\eqref{eq:L}). If we apply Lemma~\ref{lem:case1_goal}  to $L=1$, then we have that for all $j\in[1,L]$, conditionally on $\CF_{t_i}$ and on the event that $\eta|_{[t_i,\infty)}$ visits $A_{i,j-1}$, the probability that $\eta|_{[t_i, \infty)}$ visits $A_{i,j}$ is $O\!\left(\delta^{ (8/\kappa-1) (\xi-\lambda)/4}\right)$.  In order for $\eta_{[t_i,\infty)}$ to hit $A_{i,L}$, it must successively hit $A_{i,j}$ for $j\in[1,L-1]$.  Therefore, conditionally on $\CF_{t_i}$, the probability that $\eta|_{[t_i,\infty)}$ intersects $A_{i,L}$ is
\[ O\!\left(\delta^{L (8/\kappa-1) (\xi-\lambda)/4}\right).\]
Due to~\eqref{eq:L}, this is also $O(\delta^4)$.
\end{proof}

Now we have treated the two cases and consequently completed the proof of Lemma~\ref{lem:S}. We can then deduce the following result on the set of all excursions of $\eta$ between $\partial B(z,\delta^\mu)$ and $\partial B(z,\delta)$.
\begin{lemma}\label{lem:all_excursions}
For any $z$ and $\delta>0$, the following event holds with probability $1-O(\delta^4)$: For each of the excursions $e_i$ that $\eta$ makes between $\partial B(z,\delta^\mu)$ and $\partial B(z,\delta)$, for any $(W_i, y^i)$ satisfying conditions (i)-(iii) of Lemma~\ref{def:W} with this value of $\delta$, one can find
 $S_i\subseteq W_i$ such that $|S_i|\ge R+1$ and  for all $w\in S_i$, $\eta|_{(t_i,\infty)}$ does not intersect $A\!\left(e_i,w,\delta^\xi, y^i(w)\right)$.
\end{lemma}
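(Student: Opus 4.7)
Lemma~\ref{lem:all_excursions} upgrades Lemma~\ref{lem:S} in two ways: it collects the estimate over all (random) $N$ excursions between $\partial B(z,\delta^\mu)$ and $\partial B(z,\delta)$, and it strengthens the per-excursion conclusion from the specific $(W_i, y^i)$ built in Lemma~\ref{def:W} to any valid pair satisfying (i)--(iii). My strategy is a union bound, combining a tail bound on $N$ with the per-excursion estimate from Lemma~\ref{lem:S}.

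First I would bound $N$. The exponential estimate~\eqref{eq:pcrossing} from the proof of Lemma~\ref{lem:crossing}, applied with $\beta = \mu$ and $z$ fixed, yields $\p[N > M_0] = O(\delta^{c_0(1-\mu) M_0})$ for every $M_0 \in \N$, which becomes $O(\delta^4)$ once $M_0$ is chosen large enough depending only on $\kappa$ and $\mu$. We may thus work on the event $\{N \leq M_0\}$. On this event, Lemma~\ref{lem:S} applied conditionally on each $\CF_{t_i}$ gives the desired $S_i$ (for the specific measurable $(W_i, y^i)$ of Lemma~\ref{def:W}) with conditional probability $1 - O(\delta^4)$, the implicit constant being independent of $i$. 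Taking expectation and summing over the at most $M_0$ values of $i$ contributes only a multiplicative constant, absorbed into $O(\delta^4)$.

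The main obstacle is the universal quantifier ``for any $(W_i, y^i)$''. The negation of the conclusion, for \emph{some} valid $(W_i, y^i)$, is equivalent to the existence of $R$ points $w_1,\ldots,w_R \in e_i \cap B(z,\delta/2)$ at mutual distance $\geq \delta^\lambda$ together with valid $y_j$'s such that $\eta|_{(t_i,\infty)}$ visits every arc $A(e_i, w_j, \delta^\xi, y_j)$. The estimates from Lemmas~\ref{lem:case1_goal}--\ref{lem:case2_goal} bound the probability of this event \emph{for any fixed configuration} by $O(\delta^{L(8/\kappa-1)(\xi-\lambda)/4})$, using only the geometry of the arcs (the mutual distances and the radii $\delta^\xi$), not any feature special to the construction of Lemma~\ref{def:W}. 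To pass from ``fixed'' to ``any'', I would discretize: approximate each $w_j$ by a grid point at scale $\delta^P$ for some large $P$, and use hypothesis~\ref{itm:H1} to observe that there are only $O(1)$ connected components of $B(w_j,\delta^\xi) \setminus e_i$, so the effective choice of $y_j$ is discrete of bounded cardinality.

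The technical crux is to coordinate the choice of $L$ (and hence $R = L^{L+1}$) with the discretization scale $\delta^P$ so that the per-configuration probability decay outweighs the polynomial union-bound factor $\mathrm{poly}(\delta^{-1})$ coming from all admissible discretized $R$-tuples on $e_i$; combined with the tail bound on $N$, this gives total failure probability $O(\delta^4)$ and completes the proof.
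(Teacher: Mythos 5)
Your handling of the random number of excursions is fine, though it differs mildly from the paper's: you truncate at $N \le M_0$ via the exponential crossing estimate (note the exponent should be $\delta^{c_0(\mu-1)M_0}$ rather than $\delta^{c_0(1-\mu)M_0}$, since $\mu>1$), whereas the paper avoids truncation altogether by using the spatial Markov property to bound the failure probability by $O(\delta^4)\sum_{k\ge 0}\delta^{c_0(\mu-1)k}=O(\delta^4)$, each successive excursion costing an extra factor $O(\delta^{c_0(\mu-1)})$. Either route is acceptable.

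The genuine gap is in your treatment of the quantifier ``for any $(W_i,y^i)$'' by discretization plus a union bound; quantitatively this cannot close. The per-configuration estimates of Lemmas~\ref{lem:case1_goal} and~\ref{lem:case2_goal} decay only like $\delta^{L(8/\kappa-1)(\xi-\lambda)/4}$, i.e., linearly in $L$ in the exponent, while the number of discretized $R$-tuples at scale $\delta^{P}$ grows like a negative power of $\delta$ proportional to $R=L^{L+1}$; since $L^{L+1}$ grows super-exponentially in $L$, no choice of $L$ and $P$ makes the product $O(\delta^4)$. Even restricting the union bound to the witnessing substructure (a vertex with $L$ children, or a branch of depth $L+1$) would require roughly $(8/\kappa-1)(\xi-\lambda)/4>2P$, which fails because $\xi-\lambda$ must be allowed to be small (hypothesis~\ref{itm:H2} is quantified over all $\xi>1$) while $P$ must exceed $\xi$ to resolve the arcs; and the arcs $A(e_i,w,\delta^\xi,y)$ are not monotone under perturbation of $(w,y)$, so even the reduction to a discrete family is delicate. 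The intended resolution requires no union bound over a continuum: conditions (i)--(iii) include that $(W_i,y^i)$ is a measurable function of $e_i$ as a set, hence is $\CF_{t_i}$-measurable, and the proof of Lemma~\ref{lem:S} applies verbatim to any such fixed selection rule, with constants depending only on the geometric parameters $\delta^\lambda$, $\delta^\xi$ and $R$ and not on the rule. The quantifier is to be read at the level of selection rules (uniformly in the rule, one rule at a time), which is exactly what the application in Lemma~\ref{lem:borel-cantelli} needs: a single canonical rule, invariant under time-reversal of $e_i$, applied once to $\eta$ and once to its time-reversal.
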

\begin{proof}
We can apply Lemma~\ref{lem:S} iteratively for each excursion $e_i$.
Note that conditionally on $\CF_{t_i}$, the probability that $\eta|_{[t_i,\infty)}$ returns and makes an $(i+1)$st excursion is $O(\delta^{c_0(\mu-1)})$, where $c_0$ is the constant in~\eqref{eq:pcrossing} (see \cite{MR2981906}).
In order for the event in Lemma~\ref{lem:all_excursions} to fail, one either fails to find the set $S_1$ for the first excursion which happens with probability $O(\delta^4)$, or $\eta|_{[t_1,\infty)}$ makes a second excursion but one fails to find the set $S_2$ which happens with probability $O( \delta^{c_0(\mu-1)}) O(\delta^4)$, etc.  The probability of failure is therefore at most
\begin{align*}
O(\delta^4)\sum_{k=0}^\infty \delta^{c_0(\mu-1)k}  = O(\delta^4).
\end{align*}
This concludes the proof.
\end{proof}

\begin{proof}[Proof of Lemma~\ref{lem:borel-cantelli}]
By the reversibility of SLE \cite{zhan2008reverse,ms2016imag3} for $\kappa \in (0,8)$, Lemma~\ref{lem:all_excursions} holds for the time-reversal of $\eta$.
Moreover, we can use the same $(W_i, y^i)$ for both the forward and reverse curves, since if $(W_i, y^i)$ is measurable w.r.t.\ $e_i$ as a set, then it is also measurable w.r.t.\ the time-reversal of $e_i$ as a set.
Thus with probability $1-O(\delta^4)$, the following holds:
In the forward direction, for each $W_i$, the number of  $w\in W_i$ such that $A\!\left(e_i,w,\delta^\xi, y^i(w)\right)\cap\eta(t_i,\infty)\not=\emptyset$ is at most $|W_i \setminus S_i|= R-1$. In the reverse direction,  for each $W_i$, the number of  $w\in W_i$ such that $A\!\left(e_i,w,\delta^\xi, y^i(w)\right)\cap\eta(0, s_i)\not=\emptyset$ is also at most $R-1$. This implies that there is at least one point $w\in W_i$ such that $A\!\left(e_i,w,\delta^\xi, y^i(w)\right)\cap\left(\eta(0, s_i)\cup \eta(t_i,\infty)\right)=\emptyset$.  This in fact means that $A\!\left(e_i,w,\delta^\xi, y^i(w)\right)\cap \eta=\emptyset$. The pair $(w, y^i(w))$ hence satisfies~\eqref{condition2} for the excursion $e_i$.  This completes the proof.
\end{proof}

\bibliographystyle{abbrv}
\bibliography{sle_welding_determined}

\end{document}